\tikzset{In/.tip = {Hooks[right]}}
\tikzset{Onto/.tip = {To[sep] To}}
\tikzset{Eq/.style = {-, double equal sign distance}}
\tikzset{Iso/.style = {edge node = {node[above] {$\sim$}}, inner sep = 0}}
\newcommand{\bC}{{\mathbb C}}
\newcommand{\bK}{{\mathbb K}}
\newcommand{\bL}{{\mathbb L}}
\newcommand{\bN}{{\mathbb N}}
\newcommand{\bP}{{\mathbb P}}
\newcommand{\bQ}{{\mathbb Q}}
\newcommand{\bW}{{\mathbb W}}
\newcommand{\bZ}{{\mathbb Z}}
\newcommand{\cA}{{\mathcal A}}
\newcommand{\cC}{{\mathcal C}}
\newcommand{\cD}{{\mathcal D}}
\newcommand{\cG}{{\mathcal G}}
\newcommand{\cH}{{\mathcal H}}
\newcommand{\cM}{{\mathcal M}}
\newcommand{\cO}{{\mathcal O}}
\newcommand{\cP}{{\mathcal P}}
\newcommand{\cR}{{\mathcal R}}
\newcommand{\cS}{{\mathcal S}}
\newcommand{\cU}{{\mathcal U}}
\newcommand{\cV}{{\mathcal V}}
\newcommand{\ocM}{{\overline{\mathcal M}}}
\newcommand{\oW}{{\overline{W}}}
\newcommand{\tB}{{\widetilde{B}}}
\newcommand{\tC}{{\widetilde{C}}}
\newcommand{\tcC}{{\widetilde{\mathcal C}}}
\newcommand{\tlambda}{{\widetilde{\lambda}}}
\newcommand{\tsigma}{{\widetilde{\sigma}}}
\newcommand{\tmu}{{\widetilde{\mu}}}
\newcommand{\tzeta}{{\widetilde{\zeta}}}
\newcommand{\tS}{{\widetilde{S}}}
\newcommand{\trho}{{\widetilde{\rho}}}
\newcommand{\tU}{{\widetilde{U}}}
\newcommand{\tV}{{\widetilde{V}}}
\newcommand{\tX}{{\widetilde{X}}}
\newcommand{\tY}{{\widetilde{Y}}}
\newcommand{\tZ}{{\widetilde{Z}}}
\newcommand{\ra}{\rightarrow}
\newcommand{\lra}{\longrightarrow}
\newcommand{\inj}{\hookrightarrow}
\newcommand{\T}{\Theta}
\newcommand{\aut}{\operatorname{Aut}}
\newcommand{\Bl}{\operatorname{Bl}}
\newcommand{\Img}{\operatorname{Im}}
\newcommand{\Ker}{\operatorname{Ker}}
\newcommand{\Pic}{\operatorname{Pic}}
\newcommand{\iffw}{if and only if}
\newcommand{\pt}{\zeta}
\newcommand{\ol}[1]{\overline{#1}}
\newcommand\aligneq{{\phantom{{}={}}}}
\newcommand\eulerian[2]{\genfrac{\langle}{\rangle}{0ex}{}{#1}{#2}}
\newcommand\nowidth[1]{\makebox[0em][l]{#1}}
\newlist{anumerate}{enumerate}{1}
\setlist[anumerate]{label=(\alph*)}
\crefname{subsection}{Paragraph}{Paragraphs}
\Crefname{subsection}{Paragraph}{Paragraphs}
\crefname{equation}{}{}
\Crefname{equation}{}{}
\crefname{anumeratei}{}{}
\Crefname{anumeratei}{}{}
\newcommand\dual[1]{{#1}^\vee}
\newcommand\eq\coloneqq
\newcommand\into\hookrightarrow
\newcommand\iso\cong
\newcommand\minus\setminus
\newcommand\onto\twoheadrightarrow
\newcommand\prym[2]{\cP_{#1}(#2)}
\newcommand\oddprym[2]{\cP_{#1}^-(#2)}
\newcommand\bnprym[3]{\cP_{#2}^{#1}(#3)}
\newcommand\prymap{\cP_0}
\newcommand\Tprym[1]{\T_{#1}}
\newcommand\pr{{\mathrm{pr}}}
\newcommand\tensor\otimes
\newcommand\tet{\overset{\triangle}{\sim}}
\renewcommand\tilde\widetilde
\newcommand{\oL}{{\overline{L}}}
\newcommand{\tR}{{\widetilde{R}}}
\newcommand{\tT}{{\widetilde{T}}}
\newcommand{\teta}{{\widetilde\eta}}
\newcommand{\tiota}{{\widetilde\iota}}
\newcommand{\ttheta}{{\widetilde\theta}}
\newcommand{\txi}{{\widetilde\xi}}
\newcommand{\esix}{\mathsf{E}_6}
\newcommand{\Tt}{{\widetilde\Theta}}
\newcommand{\tW}{{\widetilde{W}}}
\newcommand{\tXi}{{\widetilde\Xi}}
\newcommand\tou[1]{\xrightarrow{\,{#1}\,}}
\DeclareMathOperator\ch{ch}
\DeclareMathOperator\chow{CH}
\DeclareMathOperator\alg{alg}
\newcommand\achow{\chow^{\alg}}
\DeclareMathOperator\id{id}
\DeclareMathOperator\nm{Nm}
\DeclareMathOperator\todd{td}
\DeclarePairedDelimiter\abs{|}{|}
\DeclarePairedDelimiter\card{|}{|}
\DeclarePairedDelimiter\pair{\langle}{\rangle}
\DeclarePairedDelimiter\resno{}{|}
\DeclarePairedDelimiter\set{\{}{\}}
\newcommand\res[2]{\resno{#1}_{#2}}
\renewcommand\emptyset\varnothing
\newenvironment{tikzcenter}[1][node distance = 10ex and 7em]{%
	\begin{center}%
		\begin{tikzpicture}[auto, on grid, #1]
}{
		\end{tikzpicture}%
	\end{center}%
}
\theoremstyle{definition}
\newtheorem{proposition}{Proposition}[section]
\newtheorem{lemma}[proposition]{Lemma}
\newtheorem{theorem}[proposition]{Theorem}
\newtheorem{theoremi}{Theorem}
\newtheorem{definition}[proposition]{Definition}
\newtheorem{corollary}[proposition]{Corollary}
\newtheorem{remark}[proposition]{Remark}
\numberwithin{equation}{section}
\begin{document}

\title[Surfaces generating the primal cohomology]{Surfaces generating the even primal cohomology of an abelian fivefold}

\author{Jonathan Conder}

\address{Department of Mathematics, University of California San Diego, 9500 Gilman Drive \# 0112, La Jolla, CA 92093-0112, USA}

\email{jconder@math.ucsd.edu, jonno.conder@gmail.com}

\author{Edward Dewey}

\address{Department of Mathematics, University of California San Diego, 9500 Gilman Drive \# 0112, La Jolla, CA 92093-0112, USA}

\email{ehdewey@math.ucsd.edu, ed.dewey@gmail.com}

\author{Elham Izadi}

\address{Department of Mathematics, University of California San Diego, 9500 Gilman Drive \# 0112, La Jolla, CA 92093-0112, USA}

\email{eizadi@math.ucsd.edu}

\thanks{}

\subjclass[2010]{Primary 14C30 ; Secondary 14D06, 14K12, 14H40}

\begin{abstract}

Given a very general abelian fivefold $A$ and a principal polarization $\Theta \subset A$, we construct surfaces generating the algebraic part of the middle cohomology $H^4(\Theta, {\mathbb Q})$, and determine the intersection pairing between these surfaces. In particular, we obtain a new proof of the Hodge conjecture for $H^4(\Theta, {\mathbb Q})$ and show that it contains a copy of the root lattice of $E_6$.

\end{abstract}

\maketitle

\tableofcontents

\section*{Introduction}

Let $A$ be a principally polarized abelian variety (ppav) of dimension $g \ge 4$ with smooth
symmetric theta divisor $\T$.
By the Lefschetz hyperplane theorem and Poincar\'e Duality (see, e.g., \cite{IzadiWang2015}), the cohomology of $\T$ is determined by that of $A$ except in the middle dimension $g - 1$.
The primitive cohomology of $\T$, in the sense of Lefschetz, is
\[
	H_\pr^{g - 1}(\T, \bZ) \eq \Ker\left(H^{g - 1}(\T, \bZ) \tou{\cup [\T]} H^{g + 1}(\T, \bZ)\right).
\]
The primal cohomology of $\T$ is defined as (see \cite{IzadiWang2015} and \cite{IzadiTamasWang})
\[
	\bK \eq \Ker\left(H^{g - 1}(\T, \bZ) \tou{i_*} H^{g + 1}(A, \bZ)\right)
\] 
where $i : \T \into A$ is the inclusion.
This is a Hodge substructure of $H_\pr^{g - 1}(\T, \bZ)$ of rank $g! - \frac{1}{g + 1} {2 g \choose g}$ and level $g - 3$ while the primitive cohomology $H_\pr^{g - 1}(\T, \bZ)$ has full level $g - 1$.

The action of $-1$ splits $\bK_\bQ$ into the direct sum of its invariant piece $\bK_\bQ^+$ and its anti-invariant piece $\bK_\bQ^-$. As shown in \cite[Lemma 6.1]{IzadiWang2018}, the Hodge structure $\bK^{(-1)^g}$ has level $g-3$ while the Hodge structure $\bK^{(-1)^{g-1}}$ has level $g-5$.

The primal cohomology $\bK$ and its Hodge substructure $\bK^{(-1)^{g-1}}$ are therefore interesting test cases for the general Hodge conjecture.
The general Hodge conjecture predicts that $\bK_\bQ \eq \bK \tensor \bQ$ is contained in the image, via Gysin pushforward, of the cohomology of a smooth (possibly reducible) variety of pure dimension $g - 3$ (see \cite{IzadiWang2015}).
This conjecture was proved in \cite{IzadiVanStraten} and \cite{IzadiTamasWang} in the cases $g = 4$ and $g = 5$.
When $g = 4$, it also follows from the proof of the Hodge conjecture in \cite{IzadiVanStraten} that, for $(A, \T)$ generic, $\bK$ is a simple Hodge structure (isogenous to the third cohomology of a smooth cubic threefold).
In this case the primal cohomology is fixed under the action of $-1$.

In the case $g = 5$, $\bK_\bQ^+$ and $\bK_\bQ^-$ have respective dimensions $6$ and $72$. The space $\bK_\bQ^+$ consists of Hodge classes
(\cite[Corollary~6.2]{IzadiWang2018}) while $\bK_\bQ^-$ is simple \cite{IzadiWang2018}.
It follows from the main result of \cite{IzadiTamasWang} and the Lefschetz $(1, 1)$ theorem that the classes belonging to $\bK_\bQ^+$ are algebraic.
Here we describe explicit natural surfaces in $\T$ which represent these classes.
Our main result is the following
\begin{theoremi}\label{thmmain}
Suppose $(A, \T)$ is a general ppav of dimension $5$.
\begin{anumerate}
\item
	There are $27$ smooth surfaces $V_i \subset \T$ (to be described below, up to translation), whose classes in $H^4(\T, \bQ)$ span $\bQ [\T]^2 + \bK_\bQ^+$ (which is the space of Hodge classes if $A$ is very general).
\item
	The sublattice of $\bK^+$ spanned by classes of the form $[V_i] - [V_j]$ is isometric to
	$H_\pr^2(X, \bZ)(-2)$ for any smooth cubic surface $X \subset \bP^3$.
\item
	There is a (non-canonical) bijection between the $V_i$ and the lines $L_i$ in $X$ such that the isometry sends $[V_i] - [V_j]$ to $[L_i] - [L_j]$.
\end{anumerate}
\end{theoremi}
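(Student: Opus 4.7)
The numerical coincidences---$\dim \bK_\bQ^+ = 6$, the $\esix$ root lattice has rank $6$, and a smooth cubic surface contains $27$ lines---suggest a single $W(\esix)$-equivariant construction of the $V_i$. My plan is to define the $27$ surfaces $V_i \subset \T$ by a geometric recipe with built-in $27$-element symmetry (plausibly arising from an $\esix$-configuration attached to the moduli of $(A,\T)$, e.g.\ admissible covers, distinguished sections of the Gauss map of $\T$, or translates of a single distinguished surface under a finite monodromy group), then verify that each difference $[V_i] - [V_j]$ lies in $\bK_\bQ^+$, and finally compute the pairwise intersection numbers inside $\T$. Parts (b) and (c) will then reduce to matching the resulting Gram matrix with the classical intersection form on the $27$ lines of a smooth cubic.

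For part (a), I would check for each $i$ that (i) $V_i$ is smooth of pure dimension $2$, (ii) the pushforward $i_*[V_i] \in H^6(A, \bQ)$ is independent of $i$, placing $[V_i] - [V_j] \in \bK_\bQ$, and (iii) the image of $V_i$ in $A$ is preserved by $-1$ up to a permutation of the indices, placing the difference in $\bK_\bQ^+$. Property (ii) should follow from a translation or rigidity argument on $A$; property (iii) from $-1$-equivariance of the defining construction. Once the intersection Gram matrix below is computed, the spanning statement reduces to checking that $\set{[V_i] - [V_j]}$ together with $[\T]^2$ spans a space of dimension $7 = 6 + 1$, which follows from $\mathrm{rank}\, H_\pr^2(X, \bZ) = 6$; the identification with the full space of Hodge classes at a very general point uses \cite[Corollary 6.2]{IzadiWang2018}.

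For part (b) it suffices to compute the numbers $V_i \cdot V_j \in \bZ$ in $\T$ and match the result, after twisting by $(-2)$, with the Gram matrix of the lines $L_i \cdot L_j$ on a cubic surface (for which $L_i^2 = -1$ and $L_i \cdot L_j \in \set{0,1}$ when $i \ne j$). Using $W(\esix)$-symmetry of the construction, this computation should reduce to three cases---self-intersection, an incident pair, and a skew pair---each handled by excess intersection on $A$ together with an analysis of how the Gauss map of $\T$ restricts to the $V_i$. Part (c) is then a combinatorial relabeling of the $27$-element $W(\esix)$-orbit and the associated bijection with lines of a cubic surface.

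The main obstacle, in my view, is producing a construction of the $V_i$ that is simultaneously geometric enough to make these intersection computations feasible inside $\T$ and symmetric enough that the $W(\esix)$-orbit structure (and hence the $E_6$ root lattice structure) comes out without case-by-case verification. Without such built-in symmetry, matching the $27 \times 27$ intersection matrix with that of the $27$ lines would require a pair-by-pair calculation, which would be significantly more laborious, and the sign twist $(-2)$ would have to be extracted from an explicit excess-intersection computation rather than being forced by representation theory.
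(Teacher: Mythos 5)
Your proposal is a plan rather than a proof, and the two ingredients it leaves open are exactly the substance of the theorem. First, the construction of the $27$ surfaces is not ``plausibly arising'' from an unspecified $\esix$-configuration: they are the Brill--Noether surfaces $\tX_\lambda = \bnprym{2}{10}{X} \subset \oddprym{10}{X}$ (equivalently their translates $W_p \subset \Tprym{10}$) attached to the $27$ Prym-curves $X$ in the fiber of the degree-$27$ Prym map $\prymap : \cR_6 \to \cA_5$ over $(A,\T)$, and the $W(\esix)$-symmetry you hope for comes from Donagi's identification of the tetragonal correspondence on this fiber with the incidence correspondence of the $27$ lines on a cubic surface. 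Without naming this construction you cannot verify smoothness, the class $\frac{1}{3}[\T]^3$ in $A$, the $(-1)$-invariance of the classes, or any intersection number, so parts (a)--(c) remain unproved.

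Second, your proposed computation of the Gram matrix ``by excess intersection on $A$ together with the Gauss map'' cannot succeed in principle: the numbers $[V_i]\cdot[V_j]$ are intersections of surfaces inside the fourfold $\T$, and the differences $[V_i]-[V_j]$ lie in the primal cohomology $\bK$, which by definition maps to zero in $H^6(A,\bQ)$; hence no computation carried out with cycle classes on $A$ can distinguish the three cases. The actual argument has two parts. The off-diagonal entries are pinned down by the relation $[\tX_\lambda]+[\tY_\lambda]+[\tZ_\lambda]=[\T]^2$ for each tetragonal triple $(X,Y,Z)$ (coming from the decomposition $\T\cap\T_{[p,q]}\cap\T_{[p,r]} = W_p \cup V_{p+q+r+s}\cup V_{p+q+r+s'}$), combined with the line/sixer combinatorics, and this only determines them \emph{in terms of} the self-intersection $[V_i]^2$. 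That self-intersection ($=16$) is the technical heart of the matter and is obtained by a one-parameter degeneration to a rank-one degenerate ppav with smooth total space of theta divisors: one identifies the flat limit of the family of surfaces with (the image in $\T_0$ of the proper transform of) a translate of $\Xi\cap\Xi_\beta$ in a fourfold $(B,\Xi)$ via a Hilbert-polynomial computation, makes sense of the square of the limit class on the singular central fiber through the operational Chow ring, and evaluates it using Kr\"amer's $\esix(-2)$ lattice of curve classes on $\Xi\cap\Xi_\alpha$. None of this is suggested by, or recoverable from, your adjunction/Gauss-map outline, so the proposal has a genuine gap at both the construction and the computation stage.
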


We construct the surfaces in two different ways, both of which rely on the theory of Prym varieties.
One uses Brill-Noether theory for Prym varieties \cite{Welters85}; the other exhibits the surfaces as special subvarieties of $A$, in the sense of \cite{Beauville82}.
By comparing these two constructions, exploiting a connection with the $27$ lines on a cubic surface, we produce relations between the intersection numbers $[V_i] . [V_j]$.
With these in hand, it remains to compute $[V_i]^2$ for all $i$.

To do so, we adapt the one parameter degeneration of \cite{IzadiWang2018}, whose central fiber is a compactification $(A_0, \T_0)$ of a semiabelian extension of a ppav $(B, \Xi)$ of dimension $4$.
The limit theta divisor $\T_0$ is singular and birational to $B$.

Each limit surface $V_0 \subset \T_0$ is birational to $\Xi_\alpha \cap \Xi_\beta$ for some $\alpha, \beta \in B$ (subscripts denote translation).
We identify $V_0$ using a Hilbert polynomial calculation, and make sense of $[V_0]^2$ using the smoothness of the total space of theta divisors.
Finally, we compute the degree of $[V_0]^2$ using properties of the Prym-embedded curves in $\Xi_\alpha \cap \Xi_\beta$, which were studied in \cite{Izadi-A4-1995} and \cite{Kramer2015}.

The paper is organized as follows. In \cref{secPrym} we gather some known facts about Prym Varieties and prove some preliminary results about some subvarieties of Prym varieties that we will need later. In \cref{secSurfaces} we define the $27$ surfaces and compute their intersection numbers with the exception of their self-intersection numbers. \cref{secprymcurves} contains analogous results for sets of $27$ curve classes in abelian fourfolds that we will need later for our degeneration argument. In \cref{secfamily} we describe the one-parameter degeneration of abelian fivefolds and their theta divisors that we will use to prove our main result. In \cref{the family of surfaces} we describe the one-parameter degenerations of surfaces that we use to compute the self-intersection numbers of the surfaces $V_i$, and in \cref{nodal fiber} we describe the central fibers of these families. We complete the proof of our main result in \cref{secProof} by computing the self-intersection numbers of $V_i$, using the one-parameter families of surfaces. Finally, the Appendix, \cref{secApp}, contains some computations used in the paper. It also contains the computation of the ranks and Hodge numbers of $\bK^+$ and $\bK^-$ in all dimensions.

\section*{Notation}

If $C$ is a curve, then $\tC$ always denotes a fixed \'etale double cover of $C$, with covering
involution $\sigma : \tC \to \tC$ and quotient map $\pi : \tC \to C$.
For $c \in \tC$ we denote $\sigma(c)$ by $c'$.
We denote by $C^{(d)}$ the
$d^{\text{th}}$ symmetric power of $C$, and $G_d^r(C)$ (resp.\ $W_d^r(C)$) the space of linear
systems (resp.\ complete linear systems) of degree $d$
and dimension $r$ (resp.\ at least $r$) on $C$.
We typically denote a fixed element of $G_d^r(C)$ by $g_d^r$.
The dual projective space of $g_d^r$ is denoted by $\dual{g_d^r}$.
The notation $C_1 \tet C_2$ means $C_1$ and $C_2$ are tetragonally related (see \cref{tetragonal}).

As usual, $\cA_g$ is the (coarse) moduli space of dimension $g$ principally polarized abelian
varieties (ppav), $\cR_g$ is the (coarse) moduli space of \'{e}tale couble covers $\tC \to C$ with $C$ of genus
$g$,  $\cM_g$ is the (coarse) moduli space of smooth curves of genus $g$, and $\ol{\cM}_g$ is its
compactification parametrizing stable curves.

If $X$ is a scheme, then $\chow_d(X)$ is the Chow group of algebraic cycles on $X$ modulo rational equivalence.

If $L$ is a lattice then $L(n)$ is the lattice obtained by multiplying the intersection form by $n \in \bZ$.

\section{Prym varieties and Prym-embeddings}\label{secPrym}

\subsection{Prym varieties}
The \emph{Prym map} $\prymap : \cR_{g + 1} \to \cA_g$ sends a Beauville admissible cover $\tX \tou\pi X$ (see \cite{Beauville1977-1}) to its Prym variety $\prym{0}{X} \eq \Img(\sigma^* - \id) \subset \Pic^0(\tX)$, where $\sigma : \tX \to \tX$ is the covering involution.
The map $\prymap$ is surjective for $g \le 5$, hence generically finite for $g = 5$ by a dimension count.
Its degree is $27$ in this case \cite{DonagiSmith81}.

\subsection{Prym torsors}
The Prym variety $\prym{0}{X}$ can also be defined as the identity component of $\Ker(\nm)$, where $\nm : \Pic(\tX) \to \Pic(X)$ is the norm map (which agrees with the push-forward on Chow groups $\chow_0(\tX) \tou{\pi_*} \chow_0(X)$).
The kernel of $\nm$ has a second component, a translate of $\prym{0}{X}$, which we denote by $\oddprym{0}{X}$.
We will also use the fiber of $\nm$ over $\omega_X$, whose components $\prym{2 g}{X}$ and $\oddprym{2 g}{X}$ consist of line bundles $L$ such that $h^0(L)$ is even or odd respectively.

\subsection{Prym-embeddings}
If $X$ is not hyperelliptic, then neither is $\tX$, as the push-forward $\pi_* : \tX^{(2)} \to X^{(2)}$ preserves linear equivalence.
In this case $\iota(p) \eq \cO_\tX(p - p')$ defines an embedding $\tX \into \oddprym{0}{X}$ (recall that $p' \eq \sigma(p)$).
If $Z \subseteq \Pic(\tX)$, then translates of $\iota (\tX)$ contained in $Z$ are called
\emph{Prym-embeddings} of $\tX$ in $Z$.

We also have the canonical morphism
\[
\begin{array}{lcl}
\tX^{(2)} & \lra & \prym{0}{X}\\
p+q & \longmapsto & [p, q] \eq \iota(p) + \iota(q) = \cO_\tX(p + q - p' - q').
\end{array}
\]
Note that $[p, q] = \kappa(p) - \kappa(q')$ for any Prym-embedding $\kappa : \tX \inj \prym{0}{X}$ (in fact any translate of $\iota$).

\subsection{Brill-Noether loci}\label{lambda spaces}
Given $r > 0$, let $\bnprym{r}{2g}{X}$ be the locus in $\prym{2g}{X} \amalg \oddprym{2g}{X}$ where $h^0 > r$ and $h^0 \not\equiv r \pmod{2}$.
It inherits a scheme structure from the classical Brill-Noether locus $W_{2g}^r(\tX) \subseteq \Pic(\tX)$,
and is smooth at all $L$ with $h^0 (L)=r+1$ if $X \in \cM_{g + 1}$ is sufficiently general \cite[Propsition~1.9 and Theorem~1.11]{Welters85}.

The effective locus $\Tprym{2g} = \Tprym{2g}(X) \eq \bnprym{1}{2g}{X} \subset \prym{2g}{X}$ defines a theta divisor in the sense that, for each $L_0 \in \prym{2g}{X}$, the translate $\Tprym{2g} - L_0$ is a theta divisor for $\prym{0}{X}$.
Choosing $L_0$ determines a group structure on $\prym{2g}{X}$, where inversion is given by $L \mapsto L_0^2 - L$.
If $L_0$ is a theta characteristic (meaning $L_0^2 \iso \omega_\tX$), this morphism (the \emph{residuation} map) preserves $h^0(L)$ by Serre duality.
In this case $\Tprym{2g} - L_0$ is a symmetric theta divisor.

By \cite[Proposition~3.11]{Izadi-A4-1995}, the second Brill-Noether locus $\tX_\lambda \eq \bnprym{2}{2g}{X} \subseteq \oddprym{2g}{X}$ consists of those $L$ for which $\iota(\tX) + L \subset \Tprym{2g}$.
Each $p \in \tX$ defines an embedding
\[
	W_p \eq \tX_\lambda + \iota(p) = \set{L \in \prym{2 g}{X} \mid h^0(L(-p)) > 1} \subset \Tprym{2g}.
\]
When $g = 5$, these are the surfaces we will use to generate $\bK^+$.
In order to understand them better, we will show that the $W_p$ are special subvarieties in the sense of Beauville \cite{Beauville82}.

\subsection{Special subvarieties}
Given a $g_d^r$ on $X$ with $2 r < d \leq 2 g$, the associated \emph{special subvarieties} of the symmetric power $\tX^{(d)}$ are the connected components $S_i$ of the fiber product
\begin{tikzcenter}[node distance = 10ex and 7em]
	\node (S) {$S = S_1 \amalg S_2$};
	\node[right = of S] (tXd) {$\tX^{(d)}$};
	\node[below = of S] (grd) {$\bP^r = g_d^r$};
	\node[below = of tXd] (Xd) {$X^{(d)}$\nowidth{.}};

	\draw[In-To]
		(S) edge (tXd)
		(grd) edge (Xd);
	
	\draw[-To]
		(S) edge (grd)
		(tXd) edge (Xd);
\end{tikzcenter}
If the base locus of the $g_d^r$ is reduced, then so is $S$.
When $d \le r + g$, we also consider the special subvarieties $T_i \subseteq \tX^{(2 g - d)}$ associated to the residual linear system $g_{2 g - d}^{r + g - d}$.
After choosing the indices appropriately, for each $D \in T_i$, the image of $S_i + D \subseteq \tX^{(2 g)}$ in $\Pic^{2g}(\tX)$ is a subvariety $V_D \subseteq \Tprym{2g}$.
On the other hand $S_{3 - i} + D$ maps into $\oddprym{2g}{X}$.
The $V_D$ are called \emph{special subvarieties} of $\Tprym{2g}$.

\subsection{The tetragonal construction}\label{tetragonal}
For a (base point free) $g_4^1$ the above is known as the tetragonal construction \cite{donagi92}.
Each special subvariety $\tX_i \eq S_i$ is a smooth curve, assuming the fibers of $X \to \dual{g_4^1}\cong \bP^1$ have at most one ramification point, with index at most $3$.
The quotients $\tX_i \to X_i$ induced by the covering involution $\tX^{(4)} \tou{\sigma_*}
\tX^{(4)}$ 
have the same Prym variety as $\tX \to X$.
Each $X_i$ carries a $g_4^1$ for which the associated special subvarieties of $\tX_i^{(4)}$ are $\tX$ and $\tX_{3 - i}$.
We say that the $\tX_i \to X_i$ are \emph{tetragonally related} to $\tX \to X$, written $X_i \tet
X$, or that $(X, X_1, X_2)$ is a \emph{tetragonal triple}.

If $X \tet Y$, given a Prym-embedding $\kappa : \tX \into \prym{0}{X}$ and a lift $D\in \tY^{(2 g - 4)}$ of a divisor in the $g_{2 g - 4}^{g - 3}$ on $Y$, there is a line bundle $L \in \prym{2 g}{Y}$ and an isomorphism $\varphi : \prym{0}{X} \ra \prym{0}{Y}$ such that $\varphi (\kappa(p)) = L^{-1}(P + D)$ for all $p \in \tX$ corresponding to $P \in \tY^{(4)}$.
In particular $\varphi(\kappa(\tX)) = V_D - L$.
Moreover, if $P, Q' \in \tY^{(4)}$ correspond to $p, q' \in \tX$, then
\[
	\varphi ([p, q]) = L^{-1}(P + D) \tensor L(-Q' - D) = \cO_\tY(P - Q').
\]

\subsection{Prym-curves}
A \emph{Prym-curve} for an abelian variety $A$ is an admissible cover $\tX \tou\pi X$ such that $\prym{0}{X} \iso A$.
By abuse of notation, we will often denote a Prym-curve by its base curve $X$, the double cover $\tX \tou\pi X$ being implicit.

\begin{definition}
We call a Prym-curve $\tX \tou\pi X$ \emph{good} if $X$ is smooth (hence $\pi$ is \'etale) and $X$ is not hyperelliptic, trigonal or bielliptic.
\end{definition}
When $A$ is general and $g\geq 5$, every Prym-curve for $A$ is good (see \cite[{\S}7]{Mumford-PrymI-1974}, \cite{recillas74} and \cite[{\S}3]{donagi92}). When $A$ is general and $g=4$, the fiber of the Prym map at $A$ always contains singular Prym curves. However, every smooth Prym-curve is good.

\begin{lemma}\label{manyIntersections}
	Let $X$ be a good Prym-curve and set $\T \eq \Tprym{2g}(X)$.
	If $p, q, r \in \tX$ are such that $p$, $p'$, $q$ and $r$ are distinct, then:
	\begin{anumerate}
	\item\label{doubleIntersection}
		$\T \cap \T_{[p, q]} = V_{p + q}$.
	\item\label{tripleIntersection}
		$\T \cap \T_{[p, q]} \cap \T_{[p, r]} = W_p \cup V_{p + q + r}$.
	\item\label{shiftedIntersection}
		$\T_{[p, q]} \cap \T_{[p, r]} \cap \T_{[q, r]} = (W_p + [q, r]) \cup V_{p + q + r}$.
	\item\label{algequivalent}
		$W_p$ and $W_p + [q, r]$ are algebraically equivalent in $\T_{[p, q]} \cap \T_{[p, r]}$.
	\end{anumerate}
	If $s \in \tX$ is such that $\pi_*(p + q + r + s) \in X^{(4)}$ moves in a pencil, then
	\begin{anumerate}[resume]
	\item\label{subordinateCase}
		$V_{p + q + r} = V_{p + q + r + s} \cup V_{p + q + r + s'}$.
	\end{anumerate}
\end{lemma}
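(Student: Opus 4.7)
The plan is to analyze intersections of translates of the Prym theta divisor $\T = \Tprym{2g}$ directly, using the Prym relation $\sigma^{*}L = \omega_{\tX} - L$ for $L \in \prym{2g}{X}$, the characterization $\T = \{L : h^{0}(L) \geq 2\}$, and the description of each $V_D$ as the image in $\Pic^{2g}(\tX)$ of a connected component of a fiber product parametrizing lifts to $\tX$ of divisors in a linear system on $X$.

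For (a), I establish $V_{p+q} \subseteq \T \cap \T_{[p,q]}$ via the identification $V_{p+q} = V_{p'+q'} + [p,q]$, obtained by matching connected components of $\tX^{(2g-2)} \times_{X^{(2g-2)}} |\omega_{X} - \pi(p) - \pi(q)|$ so that both variants land in $\T$ rather than $\oddprym{2g}{X}$. For the reverse inclusion, I compare pure dimensions: both $V_{p+q}$ and $\T \cap \T_{[p,q]}$ have dimension $g - 2$, and irreducibility of $V_{p+q}$ in the Prym variety of a good Prym-curve forces it to exhaust the intersection. For (b), I apply (a) twice to rewrite the triple intersection $\T \cap \T_{[p,q]} \cap \T_{[p,r]}$ as $V_{p+q} \cap V_{p+r}$. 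Any such $L$ admits divisors $F_{1} + p + q$ and $F_{2} + p + r$ in $|L|$: either these coincide, forcing $F_{1} \geq r$ and hence $L \in V_{p+q+r}$, or they are distinct sections both vanishing at $p$, which gives $h^{0}(L(-p)) \geq 2$ and $L \in W_p$. The reverse inclusions are direct: $W_{p} \subseteq \T_{[p,q]}$ uses the defining property $\tX_{\lambda} + \iota(\tX) \subseteq \T$ applied at the point $q'$, while $V_{p+q+r} \subseteq \T_{[p,q]}$ is immediate from the divisor representation.

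Part (c) follows from (b) by the substitution $N = L - [p,q]$, which converts the triple intersection into $\T \cap \T_{[q',r]} \cap \T_{[p',r]}$; applying (b) with common point $r$ and auxiliaries $q', p'$ gives $W_r \cup V_{r+p'+q'}$, and translating back uses the identities $W_{r} + [p,q] = \tX_{\lambda} + \iota(p) + \iota(q) + \iota(r) = W_{p} + [q,r]$ and $V_{r+p'+q'} + [p,q] = V_{p+q+r}$. For (d), the cycles $\T_{x} \cap Y$ with $Y = \T_{[p,q]} \cap \T_{[p,r]}$ form an algebraic family parametrized by $x \in \prym{0}{X}$: by (b), at $x = 0$ the slice is $W_p \cup V_{p+q+r}$, and by (c), at $x = [q,r]$ it is $(W_p + [q,r]) \cup V_{p+q+r}$. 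Since $\prym{0}{X}$ is connected and a generic transversality argument ensures that each component of these slices appears with multiplicity one, cancelling the common $V_{p+q+r}$ yields the algebraic equivalence $[W_{p}] \equiv [W_{p} + [q,r]]$ in $Y$.

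For (e), I apply the tetragonal construction of \cref{tetragonal}: the $g_{4}^{1}$ pencil containing $\pi_{*}(p+q+r+s)$ decomposes the fiber product $\tX^{(4)} \times_{X^{(4)}} \bP^{1}$ as $S_{1} \amalg S_{2}$, with $p+q+r+s \in S_{1}$ and $p+q+r+s' \in S_{2}$. The surfaces $V_{p+q+r+s}$ and $V_{p+q+r+s'}$, each realized as the image of $S_{j} + D$ in $\Pic^{2g}(\tX)$ for $D$ in the residual $g_{2g-4}^{g-3}$ special subvariety, are irreducible of dimension $g - 3$ and contained in $V_{p+q+r}$; their union exhausts $V_{p+q+r}$ because the two irreducible components of the latter correspond precisely to the two connected components $S_{1}, S_{2}$ of the tetragonal fiber product. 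The main obstacle throughout will be the careful component-tracking: one must verify consistent indexing so that every $V_{D}$ lies in $\T$ rather than $\oddprym{2g}{X}$, and that the two irreducible components of $V_{p+q+r}$ are correctly matched with $V_{p+q+r+s}$ and $V_{p+q+r+s'}$. A secondary issue is verifying that the double and triple intersections in (a)--(c) are generically reduced, needed for the multiplicity-one cancellation in (d).
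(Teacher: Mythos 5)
Your parts (b), (c) and (d) are essentially sound and, for (c) and (d), follow the same route as the paper: (c) by translating the triple intersection so that part (b) applies, and (d) by restricting the algebraic equivalence $\T \sim \T_{[q,r]}$ to $\T_{[p,q]} \cap \T_{[p,r]}$ and cancelling the common component (one small remark: your translation by $[p,q]$ makes $r$ the common point and needs $r \neq q'$, which is not implied by the hypothesis that $p, p', q, r$ are distinct; translating by $[q,r]$, as the paper does, keeps $p$ as the common point and only needs $p, p', q', r'$ distinct, which does follow). The genuine gaps are in (a) and (e) --- exactly the parts the paper does not reprove but cites (Beauville--Debarre, Proposition~1, and Izadi, Proposition~2.4.1). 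In (a), your reverse inclusion is claimed from ``equal dimension plus irreducibility of $V_{p+q}$,'' but that only shows $V_{p+q}$ is \emph{one} irreducible component of the pure $(g-2)$-dimensional intersection $\T \cap \T_{[p,q]}$; it does not exclude further components. Excluding them is the entire content of Beauville--Debarre's result, and it is where the goodness hypotheses (non-hyperelliptic, non-trigonal, non-bielliptic) are used --- for bad covers the intersection genuinely acquires extra components. A self-contained repair would have to add an input you do not invoke, e.g.\ Beauville's class formula $[V_{p+q}] = [\T]^2$ together with the fact that the effective intersection cycle also has class $[\T]^2$, so the residual effective cycle has zero class and hence vanishes.

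In (e), the assertion that ``the two irreducible components of $V_{p+q+r}$ correspond precisely to the two connected components $S_1, S_2$ of the tetragonal fiber product'' is precisely what has to be proved: the inclusions $V_{p+q+r+s},\, V_{p+q+r+s'} \subseteq V_{p+q+r}$ are clear, but you give no argument that their union exhausts $V_{p+q+r}$ (nor that $V_{p+q+r}$ has exactly two components so matched). The missing step is, for instance: since $\pi_*(p+q+r+s)$ moves in a pencil and $X$ is not trigonal, $h^0(\omega_X - \pi_*(p+q+r)) = h^0(\omega_X - \pi_*(p+q+r+s))$, so $\pi(s)$ is a base point of $\abs{\omega_X - \pi_*(p+q+r)}$; hence every effective divisor $E$ with $E + p + q + r \in \abs{L}$, $L \in V_{p+q+r}$, contains $s$ or $s'$, which (with the parity bookkeeping you already describe) gives the converse inclusion --- or one simply cites Izadi's Proposition~2.4.1 as the paper does. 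By contrast, your pointwise argument for (b) (the two divisors through $p+q$ and $p+r$ either coincide, forcing a divisor through $p+q+r$, or are independent sections vanishing at $p$, forcing $h^0(L(-p)) \geq 2$) is correct once (a) is in place, since for $L$ in the even component any effective representative automatically lies in the correct component $S_i$.
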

\begin{proof}
	See the proof of \cite[Proposition~1]{BeauvilleDebarre87} for \cref{doubleIntersection,tripleIntersection}, and \cite[Proposition~2.4.1]{Izadi-A4-1995} for \cref{subordinateCase}.
	Using \cref{tripleIntersection},
	\[
		\T_{[p, q]} \cap \T_{[p, r]} \cap \T_{[q, r]} = (\T_{[p, r']} \cap \T_{[p, q']} \cap \T) + [q, r] = (W_p \cup V_{p + q' + r'}) + [q, r] = (W_p + [q, r]) \cup V_{p + q + r},
	\]
giving \cref{shiftedIntersection}.
	The algebraic equivalence $\T \sim \T_{[q, r]}$ on $\prym{2 g}{X}$ restricts to
	\[
		\T \cap \T_{[p, q]} \cap \T_{[p, r]} \sim \T_{[q, r]} \cap \T_{[p, q]} \cap \T_{[p, r]}
	\]
	on $\T_{[p, q]} \cap \T_{[p, r]}$.
	Using \cref{tripleIntersection,shiftedIntersection}, this gives \cref{algequivalent}.
\end{proof}

\begin{lemma}\label{surface isomorphism}
	Suppose $X \tet Y$ are good Prym-curves for $(A, \T)$.
	If $p \in \tX$ corresponds to $P \in \tY^{(4)}$, there is an isomorphism $\psi : \prym{2 g}{X} \ra \prym{2 g}{Y}$ for which $\psi(W_p) = V_P$.
\end{lemma}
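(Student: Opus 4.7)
The plan is to use the tetragonal construction of \cref{tetragonal} to lift the canonical isomorphism $\varphi \colon \prym{0}{X} \isoto \prym{0}{Y}$ to a torsor isomorphism $\psi \colon \prym{2g}{X} \isoto \prym{2g}{Y}$, and then identify $\psi(W_p)$ with $V_P$ by matching both sides to a common triple intersection of translated theta divisors.

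First, by \cref{tetragonal} the tetragonal pencil on $Y$ arising from $X \tet Y$ yields $\varphi$ together with the bijection $\tX \iso S_1 \subseteq \tY^{(4)}$, $p \mapsto P$, satisfying $\varphi([p, q]) = \cO_\tY(P - Q')$ whenever $Q' \in S_1$ corresponds to $q' \in \tX$. I would extend $\varphi$ to a torsor isomorphism $\psi \colon \prym{2g}{X} \isoto \prym{2g}{Y}$ with $\psi(\Tprym{2g}(X)) = \Tprym{2g}(Y)$; such $\psi$ exists because $\varphi$ is compatible with the principal polarizations and the theta divisor is unique in its translation class. Given $p$, choose generic $q, r \in \tX$ so that $p, p', q, r$ are pairwise distinct. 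By \cref{manyIntersections}\cref{tripleIntersection},
\[
\Tprym{2g}(X) \cap \Tprym{2g}(X)_{[p, q]} \cap \Tprym{2g}(X)_{[p, r]} = W_p \cup V_{p + q + r},
\]
with $W_p$ an irreducible variety of dimension $g - 3$ and $V_{p + q + r}$ of strictly smaller dimension. Applying $\psi$ and using the tetragonal formula transforms this into
\[
\Tprym{2g}(Y) \cap \Tprym{2g}(Y)_{\cO_\tY(P - Q')} \cap \Tprym{2g}(Y)_{\cO_\tY(P - R')} = \psi(W_p) \cup \psi(V_{p + q + r}).
\]

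It remains to show directly that $V_P$ is contained in the $Y$-side triple intersection. Membership in $\Tprym{2g}(Y)$ is immediate from the definition of $V_P$. For the translate by $\cO_\tY(P - Q')$, one computes $V_P - \cO_\tY(P - Q') = \{\cO_\tY(Q' + T) : T \in T_1\}$; the key input is that the covering involution $\sigma_*$ of $\tY^{(4)}$ preserves each component $S_i$, because the quotient $S_i / \sigma$ is one of the tetragonally related base curves. Hence $Q' \in S_1$, and $\{\cO_\tY(Q' + T) : T \in T_1\}$ is precisely the special subvariety $V_{Q'} \subseteq \Tprym{2g}(Y)$. The same check applies with $R'$ in place of $Q'$, so $V_P \subseteq \psi(W_p) \cup \psi(V_{p+q+r})$; irreducibility of $V_P$ together with $\dim V_P = g - 3 > \dim \psi(V_{p+q+r})$ forces $V_P \subseteq \psi(W_p)$, and equality of dimensions and irreducibility then give $\psi(W_p) = V_P$. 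The main obstacle I anticipate is the parity and indexing bookkeeping in the last step: confirming that $\cO_\tY(P - Q') \in \prym{0}{Y}$ (so that translation preserves the component $\prym{2g}{Y}$) and that with the correct choice of indices $V_{Q'}$ lands in $\prym{2g}{Y}$ rather than in $\oddprym{2g}{Y}$. Both should follow from the triviality of $\pi_{Y*}(P - Q')$ in $\Pic^0(Y)$ (since $\pi_{Y*}(P)$ and $\pi_{Y*}(Q)$ both lie in the same pencil $g_4^1$) and from the $\sigma$-invariance of $S_1$.
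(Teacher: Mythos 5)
There is a genuine gap in the final step. Your containment argument only places $V_P$ inside the transported triple intersection $\psi(W_p) \cup \psi(V_{p+q+r})$, and you then try to force $V_P \subseteq \psi(W_p)$ by claiming $\dim \psi(V_{p+q+r}) < g-3$. That claim is false: $V_{p+q+r}$ is a special subvariety of class $\tfrac{2}{3}[\T]^3$, hence of codimension $3$, so it has the \emph{same} dimension $g-3$ as $W_p$ and $V_P$. Irreducibility of $V_P$ then only tells you that $V_P$ lies in one of the two pieces, and nothing in your argument rules out $V_P \subseteq \psi(V_{p+q+r})$; note that by \cref{manyIntersections}\cref{subordinateCase} the piece $V_{p+q+r}$ can itself split into special subvarieties of class $\tfrac{1}{3}[\T]^3$, which are numerically indistinguishable from $V_P$, so no dimension or degree count at this level can separate the components. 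The paper avoids this by proving the inclusion $V_P \subseteq \psi(W_p)$ \emph{directly}: for $M \in V_P$, the bundle $M(Q-P)$ is effective and lies in $\prym{2g}{Y}$ for \emph{every} $Q \in \tY^{(4)}$ corresponding to a point $q \in \tX$ (not just two generic ones), so $\iota(q) - \iota(p) + \psi^{-1}(M) \in \Tprym{2g}(X)$ for all $q$, and the Brill--Noether characterization $\tX_\lambda = \bnprym{2}{2g}{X} = \set{L \mid \iota(\tX) + L \subset \Tprym{2g}(X)}$ from \cref{lambda spaces} gives $\psi^{-1}(M) \in W_p$. Quantifying over all of $\tX$ is exactly what replaces your (invalid) dimension comparison.

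A secondary issue is the concluding equality: even granting $V_P \subseteq \psi(W_p)$, you deduce $\psi(W_p) = V_P$ from ``equal dimensions and irreducibility,'' which presupposes irreducibility of $W_p$ (equivalently of $\tX_\lambda$) that you have not established. The paper instead compares classes: by Beauville, $[V_P] = \tfrac{1}{3}[\T]^3$, while $[W_p] = [\T]^3 - [V_{p+q+r}] = \tfrac{1}{3}[\T]^3$ by \cref{manyIntersections}\cref{tripleIntersection}, so $\psi_*[W_p] = [V_P]$ and the inclusion of effective cycles of equal class forces equality, with no irreducibility needed. Your first step (constructing $\psi$ from $\varphi$ via theta characteristics so that $\psi(\Tprym{2g}(X)) = \Tprym{2g}(Y)$, and the use of $\varphi([p,q]) = \cO_\tY(P - Q')$ together with $\sigma$-invariance of the special curve in $\tY^{(4)}$) matches the paper and is fine; it is the separation of $\psi(W_p)$ from $\psi(V_{p+q+r})$ and the final equality that need to be redone along the lines above.
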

\begin{proof}
	Let $L \in \prym{2 g}{Y}$ and $\varphi : \prym{0}{X} \to \prym{0}{Y}$ be as in \cref{tetragonal}.
	Choose theta characteristics $L_X$ and $L_Y$ on $\tX$ and $\tY$ respectively, and define $\psi$ by the formula
	\[
		\psi(M) \eq \varphi(M - L_X) + L_Y.
	\]
	We may choose $L_Y$ so that $\psi$ maps $\Tprym{2 g}(X)$ onto $\Tprym{2 g}(Y)$.
	For $M \in V_P$ and $q \in \tX$ (corresponding to $Q \in \tY^{(4)}$),
	\[
		\varphi(\iota(q) - \iota(p)) + M = \varphi [q, p'] + M = M(Q - P) \in \Tprym{2g}(Y).
	\]
	(note that $M(-P)$ is effective).
	Applying $\psi^{-1}$ shows that
	\[
		\iota(q) - \iota(p) + \psi^{-1}(M) \in \Tprym{2 g}(X)
	\]
	for all $q \in \tX$, which means $\psi^{-1}(M) - \iota(p) \in \tX_\lambda$.
	Therefore $V_P \subseteq \psi(W_p)$.

	As a special subvariety, $V_P$ has class $\frac{1}{3} [\T]^3 \in H^6(A, \bZ)$ \cite[Th\'eor\`eme 1]{Beauville82}.
	On the other hand $V_{p + q + r}$ has class $\frac{2}{3} [\T]^3$ whenever $q, r \in \tX$. Hence, by \cref{manyIntersections}\cref{tripleIntersection}, $\psi_* [W_p] = [V_P]$.
\end{proof}

\subsection{$\lambda$-classes}
Let $X$ be a Prym-curve for $(A, \T)$.
By definition, residuation sends $W_p$ to $W_{p'}$ for all $p \in \tX$.
The algebraic equivalence class of $W_p$ in $\T_{2 g}$ is independent of $p \in \tX$, and therefore fixed by residuation.
Given an isomorphism $\psi : \prym{0}{X} \to A$, there is a unique theta characteristic $L_0$ on $\tX$ such that $\psi(\T_{2 g} - L_0) = \T$.
Let $[\tX_\lambda]$ be the algebraic equivalence class of $\psi(W_p - L_0) \subset \T$; it is independent of $p \in \tX$ and fixed by $-1$.
If $(A, \T)$ is very general, then $\aut(A, \T) = \set{\pm 1}$, which means $[\tX_\lambda]$ does not depend on $\psi$.

\begin{corollary}\label{triple theta sum}
	If $(X, Y, Z)$ is a tetragonal triple of good Prym-curves for a very general Prym variety $(A, \T)$, then $[\tX_\lambda] + [\tY_\lambda] + [\tZ_\lambda] = [\T]^2$.
\end{corollary}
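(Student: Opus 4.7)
The plan is to deduce $[\T]^2 = [\tX_\lambda] + [\tY_\lambda] + [\tZ_\lambda]$ by applying parts \cref{tripleIntersection} and \cref{subordinateCase} of \cref{manyIntersections} to a single triple intersection of translates of $\T$, then identifying each piece of the resulting decomposition with a $\lambda$-class via \cref{surface isomorphism}.

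I would first choose the combinatorial data carefully. Let $\ell$ be the $g_4^1$ on $X$ whose tetragonal construction yields $(X, Y, Z)$, and pick a generic lift $p + q + r + s \in \tX^{(4)}$ of a fiber of $\ell$, arranged so that it lies in the component $\tY \subset \tX^{(4)}$ of the associated special subvariety. Then $p + q + r + s$ corresponds to a point $\eta \in \tY$ under the tetragonal embedding $\tY \into \tX^{(4)}$, while the sign-flipped lift $p + q + r + s'$ lies in the other component $\tZ \into \tX^{(4)}$ and corresponds to a point $\zeta \in \tZ$. With these choices, parts \cref{tripleIntersection} and \cref{subordinateCase} of \cref{manyIntersections} give
\[
	\T \cap \T_{[p, q]} \cap \T_{[p, r]} = W_p \cup V_{p + q + r + s} \cup V_{p + q + r + s'}.
\]
Since each $\T_{[p, q]}$ is algebraically equivalent to $\T$, the class of this proper intersection in $H^4(\T, \bQ)$ equals $[\T]^2$, yielding
\[
	[\T]^2 = [W_p] + [V_{p + q + r + s}] + [V_{p + q + r + s'}].
\]

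To finish I would identify each summand with the appropriate $\lambda$-class. By definition, $[W_p] = [\tX_\lambda]$. Applying \cref{surface isomorphism} (with the roles of $X$ and $Y$ swapped, so that $\eta \in \tY$ corresponds to $p + q + r + s \in \tX^{(4)}$) produces an isomorphism $\prym{2g}{Y} \ra \prym{2g}{X}$ carrying $W_\eta$ to $V_{p + q + r + s}$; after identifying both Prym torsors with $\T \subset A$ via their respective theta characteristics, this gives $[V_{p + q + r + s}] = [\tY_\lambda]$ in $H^4(\T, \bQ)$. The analogous argument for the tetragonal pair $Z \tet X$ (with $\zeta \in \tZ$ corresponding to $p + q + r + s'$) gives $[V_{p + q + r + s'}] = [\tZ_\lambda]$. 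Summing produces the claimed identity. The main obstacle is verifying that the isomorphisms produced by \cref{surface isomorphism} respect the fixed identifications of $\prym{0}{X}, \prym{0}{Y}, \prym{0}{Z}$ with $A$, so that the classes of $W_\eta, W_\zeta$ in $\T$ are indeed $[\tY_\lambda], [\tZ_\lambda]$; this is precisely where the very general hypothesis enters, via $\aut(A, \T) = \set{\pm 1}$, which forces the $\lambda$-classes to be unambiguous.
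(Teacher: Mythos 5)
Your proposal is correct and follows essentially the same route as the paper: decompose $\T \cap \T_{[p,q]} \cap \T_{[p,r]}$ via \cref{manyIntersections}\cref{tripleIntersection,subordinateCase} for a lift $p+q+r+s \in \tY \subset \tX^{(4)}$ of a fiber of the $g_4^1$, then identify $[W_p]$, $[V_{p+q+r+s}]$, $[V_{p+q+r+s'}]$ with $[\tX_\lambda]$, $[\tY_\lambda]$, $[\tZ_\lambda]$ using \cref{surface isomorphism} and the well-definedness of $\lambda$-classes for $(A,\T)$ very general. Your extra remarks about the translates being algebraically equivalent to $\T$ and about $\aut(A,\T)=\set{\pm 1}$ simply make explicit what the paper leaves implicit.
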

\begin{proof}
	If $p + q + r + s \in \tY \subset \tX^{(4)}$ lifts a reduced divisor of the $g_4^1$ on $X$, and $\Xi \eq \T_{2 g}(X)$, then
	\[
		\Xi \cap \Xi_{[p, q]} \cap \Xi_{[p, r]} = W_p \cup V_{p + q + r + s} \cup V_{p + q + r + s'}
	\]
	by \cref{manyIntersections} parts \cref{tripleIntersection,subordinateCase}.
	Choosing isomorphisms as needed, \cref{surface isomorphism} implies that the classes of (the images of) $W_p$, $V_{p + q + r + s}$ and $V_{p + q + r + s'}$ in $\T$ are $[\tX_\lambda]$, $[\tY_\lambda]$ and $[\tZ_\lambda]$ respectively.
\end{proof}

\section{27 surfaces}\label{secSurfaces}

In this section $g = 5$ and the $\tX_\lambda$ are surfaces.
The fiber of $\prymap$ at a general ppav $(A, \T) \in \cA_5$ consists of $27$ Prym-curves, and the tetragonal correspondence between them is isomorphic to the incidence correspondence for the lines on a smooth cubic surface \cite[4.2]{donagi92}.
This is almost enough to compute the intersection pairing between the $[\tX_\lambda]$.

\subsection{27 lines}\label{27 lines}
The lines in the cubic surface obtained by blowing up $6$ points $p_1, \dots, p_6 \in \bP^2$ in general position (with respect to lines and conics) can be described as follows:
\begin{itemize}
\item
	The exceptional divisor $E_i$ over each point $p_i$.
\item
	The proper transform $F_{i j}$ of the line joining $p_i$ to $p_j$, for $i < j$.
	If $i > j$ we set $F_{i j} \eq F_{j i}$.
\item
	The proper transform $G_j$ of the conic containing $p_i$ for $i \ne j$.
\end{itemize}
Two lines meet \iffw{} they both belong to a triple of the form $(E_i, F_{i j}, G_j)$ with $i \ne j$, or $(F_{i j}, F_{k l}, F_{m n})$ with $\set{i, j, k, l, m, n} = \set{1, \dots, 6}$.
The automorphism group $W(\esix)$ of this configuration acts transitively on lines and sixers (i.e., sextuples of mutually skew lines) \cite[Proposition 9.1.4]{Dolgachev2012}.
Any permutation of the indices acts on the sixer $(E_1, \dots, E_6)$ and on the cubic surface.

\begin{theorem}\label{intersection numbers}
	Let $(A, \T)$ be a very general ppav of dimension $5$.
	Suppose that $[\tX_\lambda]^2 = 16$ whenever $X$ is a Prym-curve for $(A, \T)$.
	If $X$ and $Y$ are non-isomorphic Prym-curves, then
	\[
		[\tX_\lambda] . [\tY_\lambda] = \begin{dcases}
			12 & \text{ if } X \tet Y, \\
			14 & \text{ otherwise.}
		\end{dcases}
	\]
\end{theorem}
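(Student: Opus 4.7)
The plan is to leverage the tetragonal triple relation from \cref{triple theta sum} together with the combinatorial structure of the $27$ lines on a cubic surface described in \cref{27 lines} to reduce the theorem to a small linear system. Under the $W(\esix)$-symmetry of the configuration --- which acts transitively both on pairs of meeting lines and on pairs of skew lines --- together with the canonicity of each class $[\tX_\lambda]$ for very general $(A,\T)$ (since $\aut(A,\T) = \set{\pm 1}$ and $-1$ fixes $[\tX_\lambda]$), the intersection pairing takes only two values: $a \eq [\tX_\lambda]\cdot[\tY_\lambda]$ when $X \tet Y$, and $b \eq [\tX_\lambda]\cdot[\tY_\lambda]$ otherwise. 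The goal is then to show $a = 12$ and $b = 14$.

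The first step is to compute the auxiliary intersection number $[\tX_\lambda]\cdot[\T]^2$ on $\T$. By \cref{manyIntersections}\cref{tripleIntersection}, combined with Beauville's evaluation $[V_{p+q+r}] = \frac{2}{3}[\T]^3$ as used in the proof of \cref{surface isomorphism}, the pushforward of $[\tX_\lambda]$ along $i \colon \T \into A$ equals $\frac{1}{3}[\T]^3 \in H^6(A, \bZ)$. The projection formula then gives
\[
[\tX_\lambda] \cdot [\T]^2 = \frac{1}{3}[\T]^3 \cdot [\T]^2 = \frac{[\T]^5}{3} = \frac{5!}{3} = 40.
\]

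The second step is to intersect the relation $[\tX_\lambda] + [\tY_\lambda] + [\tZ_\lambda] = [\T]^2$ in $H^4(\T,\bZ)$ with $[\tW_\lambda]$ for two well-chosen Prym-curves $W$. Taking $W = X$ yields $[\tX_\lambda]^2 + 2a = 16 + 2a = 40$, hence $a = 12$. For the second equation I would take $W$ outside the triple $(X,Y,Z)$: the tritangent plane cutting out the three lines corresponding to $X, Y, Z$ meets any other line of the cubic in a unique point, which must lie on exactly one of the three, so $W$ is tetragonally related to exactly one of $X, Y, Z$. Intersecting the triple relation with $[\tW_\lambda]$ then produces $a + 2b = 40$, giving $b = 14$.

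The only genuinely delicate point --- and the place where the argument could fail if one is careless --- is the well-definedness of $a$ and $b$, since one must know that these numbers depend only on the incidence type of the pair and not on the particular pair chosen. This is handled by the canonicity of the classes $[\tX_\lambda]$ on a very general $(A,\T)$ together with the transitivity of $W(\esix)$ on the two orbits of ordered pairs; with that in hand, the entire proof collapses to the two linear equations above.
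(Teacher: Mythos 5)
Your overall skeleton coincides with the paper's: the relation $[\tX_\lambda] + [\tY_\lambda] + [\tZ_\lambda] = [\T]^2$ from \cref{triple theta sum}, the value $[\tX_\lambda] . [\T]^2 = \frac{5!}{3} = 40$ coming from $i_*[\tX_\lambda] = \frac{1}{3}[\T]^3$, and (for your second equation) the fact that a line not belonging to a tritangent trio meets exactly one line of that trio are precisely the ingredients used there, and your arithmetic from the two equations $16 + 2a = 40$ and $a + 2b = 40$ is fine. The gap is the step you yourself single out as delicate: the claim that the pairing takes only two values $a$, $b$ depending on the incidence type. The justification you offer --- transitivity of $W(\esix)$ on pairs of incident (resp.\ skew) lines together with the canonicity of the classes $[\tX_\lambda]$ --- does not establish this. $W(\esix)$ is the automorphism group of the abstract incidence configuration; it does not act on $(A, \T)$, on the set of its Prym-curves, or on $H^4(\T, \bQ)$, and canonicity of each individual class (independence of the choice of isomorphism $\psi$, because $\aut(A,\T) = \set{\pm 1}$) gives no comparison whatsoever between two different pairs $(X, Y)$ and $(X', Y')$ of the same incidence type. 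Without that input your first equation only yields $\pair{X, Y} + \pair{X, Z} = 24$ and your second only $\pair{W, Y} + \pair{W, Z} = 28$, which is exactly the paper's relation \cref{tripledouble} and does not by itself give $14$.

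There are two ways to close this. One is to produce an actual geometric source of the $W(\esix)$-symmetry, namely a monodromy argument: the Galois group of the degree-$27$ Prym map $\cR_6 \to \cA_5$ is $W(\esix)$, and the intersection numbers are locally constant as $(A,\T)$ varies, so $(X, Y) \mapsto [\tX_\lambda] . [\tY_\lambda]$ is constant on $W(\esix)$-orbits of pairs; this would need to be stated and cited, and it is not what you wrote. The other is the paper's route, which avoids any symmetry claim: permuting a tetragonal triple $(X, Y, Z)$ in the first equation already forces $\pair{X,Y} = \pair{X,Z} = \pair{Y,Z} = 12$ with no extra input, and then a web of relations of the form \cref{tripledouble}, obtained from explicit pairs of tritangent trios in the $E_i, F_{ij}, G_j$ labelling of \cref{27 lines}, shows that all the relevant skew-type pairings are equal to one another; only then does the triple $(F_{12}, F_{34}, F_{56})$ intersected with $E_1$ (your second equation) give $\pair{E_1, F_{34}} = 14$. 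As written, your proof assumes the equality of skew-type pairings rather than proving it, so it is incomplete at its crucial point.
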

\begin{proof}
	First, suppose that $X \tet Y$ and form the tetragonal triple $(X, Y, Z)$.
	Let $\pair{-, -}$ be the pairing on (formal sums of) Prym-curves induced by the intersection form on $\T$, so that
	\[
		\pair{X, X + Y + Z} = [\tX_\lambda] . ([\tX_\lambda] + [\tY_\lambda] + [\tZ_\lambda]) = [\tX_\lambda] . [\T]^2 = \frac{g!}{3} = 40,
	\]
	by \cref{triple theta sum} and the fact that $[\tX_\lambda] = \frac{1}{3} [\T]^3$ in $A$ (see \cref{surface isomorphism}).
	Permuting $X,Y,Z$, we obtain
	\[
		\pair{X, Y + Z} = \pair{Y, Z + X} = \pair{Z, X + Y} = 40 - 16 = 24,
	\]
	so $\pair{X, Y} = \pair{X, Z} = \pair{Y, Z} = 12$.
	
	Now, suppose that $X \not\tet Y$. Choose a Prym-curve $Z_1$ tetragonally related to $X$ and $Y$.
	As above, we form triples $(X, Z_1, X_1)$ and $(Y, Z_1, Y_1)$ so that
	\[
		40 = \pair{X, Y + Z_1 + Y_1} = \pair{X_1, Y + Z_1 + Y_1} = \pair{Y, X + Z_1 + X_1} = \pair{Y_1, X + Z_1 + X_1}
	\]
	and hence
	\begin{equation}\label{tripledouble}
		\pair{X, Y} = 28 - \pair{X_1, Y} = \pair{X_1, Y_1} = 28 - \pair{X, Y_1}.
	\end{equation}
	This does not immediately give $\pair{X, Y} = 14$, but we can find more equations of the form \cref{tripledouble} by varying $X$, $Y$ and $Z_1$.
	For this, it helps to label the Prym-curves as in \cref{27 lines}.
	We have similar identities for all the $4$-tuples of surfaces obtained, as above, from a pair of skew lines in the cubic surface.
	For distinct indices $i,j,k$, we have the following pairs of tetragonally related triples
	\begin{tikzcenter}[node distance = 7ex]
		\node (Fija) {$F_{i j}$};
		\node[above left = of Fija] (Eia) {$E_i$};
		\node[above right = of Fija] (Gia) {$G_i$};
		\node[below left = of Fija] (Gja) {$G_j$};
		\node[below right = of Fija] (Eja) {$E_j$};

		\draw[-]
			(Eia) edge (Fija)
			(Fija) edge (Gja)
			(Gja) edge (Eia)

			(Gia) edge (Fija)
			(Fija) edge (Eja)
			(Eja) edge (Gia);

		\node[xshift = 4cm] (Gkc) {$G_k$};
		\node[above left = of Gkc] (Eic) {$E_i$};
		\node[above right = of Gkc] (Ejc) {$E_j$};
		\node[below left = of Gkc] (Fikc) {$F_{i k}$};
		\node[below right = of Gkc] (Fjkc) {$F_{j k}$};

		\draw[-]
			(Eic) edge (Gkc)
			(Gkc) edge (Fikc)
			(Fikc) edge (Eic)

			(Ejc) edge (Gkc)
			(Gkc) edge (Fjkc)
			(Fjkc) edge (Ejc); 
			
		\node[xshift = 8cm] (Ekb) {$E_k$};
		\node[above left = of Ekb] (Gib) {$G_i$};
		\node[above right = of Ekb] (Gjb) {$G_j$};
		\node[below left = of Ekb] (Fikb) {$F_{ik}$};
		\node[below right = of Ekb] (Fjkb) {$F_{jk}$};

		\draw[-]
			(Gib) edge (Ekb)
			(Ekb) edge (Fikb)
			(Fikb) edge (Gib)

			(Gjb) edge (Ekb)
			(Ekb) edge (Fjkb)
			(Fjkb) edge (Gjb);
	\end{tikzcenter}
	The application of \cref{tripledouble} gives
	\[
		\pair{E_i, G_i} = \pair{G_j, E_j} = 28 - \pair{E_i, E_j} = 28 - \pair{G_i, G_j},
	\]
	\[
		\pair{E_i, E_j} = \pair{F_{ik}, F_{jk}} = 28 - \pair{E_i, F_{jk}} = 28 - \pair{E_j, F_{ik}},
	\]
	\[
		\pair{G_i, G_j} = \pair{F_{ik}, F_{jk}} = 28 - \pair{G_i, F_{jk}} = 28 - \pair{G_j, F_{ik}}.
	\]
It follows from the equations above that all intersection numbers of the form $\pair{E_i, E_j}$, $\pair{G_i, G_j}$ or $\pair{F_{ik}, F_{jk}}$ are equal and all intersection numbers of the form $\pair{E_i, G_i}$, $\pair{E_i, F_{jk}}$ or $\pair{G_i, F_{jk}}$ are equal. Now note that we also have the surfaces obtained from the tetragonally related triple $(F_{12}, F_{34}, F_{56})$. Intersecting the sum of these three surfaces with $E_1$, we obtain:
\[
\pair{E_1, F_{34}} + \pair{E_1, F_{56}} = 28.
\]
Since $\pair{E_1, F_{34}}=\pair{E_1, F_{56}}$, we obtain
\[
\pair{E_1, F_{34}} = 14
\]
and hence all the other intersection numbers above are also equal to $14$.
\end{proof}

\section{27 curves}\label{secprymcurves}

For a general ppav $B$ of dimension $g = 4$, the fiber of $\prymap$ over $B$ is two-dimensional, a double cover of the Fano surface of lines on a particular cubic threefold \cite[5.1]{donagi92}, \cite[6.27]{Izadi-A4-1995} .
The covering involution is given by $X \mapsto X_\lambda$, where $\tX_\lambda \to X_\lambda$ is the quotient by residuation \cite[3.11, 3.13]{Izadi-A4-1995}.
Each point $\alpha \in B$ determines a hyperplane section of the cubic threefold, and a choice of Prym-curve $X$ lying over each of its lines \cite[3.16, 4.8, 4.9, 5.7, 5.10]{Izadi-A4-1995}.
The curves $\tX_\lambda$ admit embeddings in $\Xi \cap \Xi_\alpha$, where $\Xi$ is a theta divisor for $B$.
In this section we prove an analog of \cref{intersection numbers} for these embeddings.

For $0 \ne \alpha \in B$, one has $W_p \subset \T \cap \T_\alpha$ \iffw{} $\alpha = [p, q]$ for some $q \in \tX$ \cite[3.16]{Izadi-A4-1995}.

\begin{definition}
An $\alpha$-curve is a Prym-curve $X$ for $B$ such that $W_p \subset \T \cap \T_\alpha$ for some $p \in \tX$, where $\T \eq \Tprym{8}(X)$ is the canonical translate of $\Xi$ in $\prym{8}{X}$.
\end{definition}

\begin{lemma}\label{betterAlgEquivalence}
	If $X$ is an $\alpha$-curve such that $X$ and $X_\lambda$ are good, then the translates of $\tX_\lambda$ in $\Xi \cap \Xi_\alpha$ are algebraically equivalent.
\end{lemma}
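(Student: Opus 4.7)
My plan is to apply \cref{manyIntersections}\cref{algequivalent} in a one-parameter family over $\tX$ and then pass to a limit. By the characterization of $\alpha$-curves recalled just before the lemma, every translate of $\tX_\lambda$ in $\Xi \cap \Xi_\alpha$ (once $\prym{2g}{X}$ is identified with $B$) has the form $W_p$ for some $p \in \tX$ with $\alpha = [p, q]$ for some $q \in \tX$. For generic $\alpha$ the fiber of the sum map $\tX^{(2)} \to B$ over $\alpha$ is a single point $p + q$, so the translates in $\Xi \cap \Xi_\alpha$ reduce to $W_p$ and $W_q$, and it suffices to establish $W_p \sim W_q$ in $\T \cap \T_\alpha$. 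Non-generic $\alpha$ can then be handled by specialization from the generic case along the image surface of $\tX^{(2)} \to B$.

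The starting input is \cref{manyIntersections}\cref{algequivalent}, which yields $W_p \sim W_p + [q, r]$ inside the surface $\T_{[p, q]} \cap \T_{[p, r]} = \T_\alpha \cap \T_{[p, r]}$ for each $r \in U \eq \tX \setminus \set{p, p', q}$. This equivalence is obtained by restricting the algebraic equivalence $\T \sim \T_{[q, r]}$ on $\prym{2g}{X}$ to $\T_\alpha \cap \T_{[p, r]}$ and cancelling the common component $V_{p + q + r}$ that appears on both sides by \cref{manyIntersections} parts \cref{tripleIntersection,shiftedIntersection}. As $r$ varies over $U$ the surfaces $\T_\alpha \cap \T_{[p, r]}$ form a flat family, and within it the cycles $W_p$ (constant in $r$) and $W_p + [q, r]$ (varying in $r$) define relative $1$-cycles whose pointwise algebraic equivalences glue into a relative algebraic equivalence over $U$.

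At the boundary value $r = p'$ one has $[p, r] = [p, p'] = 0$, so $\T_{[p, r]}$ degenerates to $\T$ and the ambient surface $\T_\alpha \cap \T_{[p, r]}$ specializes to $\T \cap \T_\alpha$; simultaneously $[q, r] = [q, p'] = \iota(q) - \iota(p)$, so $W_p + [q, r]$ specializes to $W_p + \iota(q) - \iota(p) = W_q$. The hard part will be the degeneration analysis: I must show that the family of ambient surfaces extends flatly across $r = p'$, that the two relative cycles extend compatibly (in particular that $W_p + [q, r]$ limits to the full curve $W_q$ with no excess), and that the relative algebraic equivalence over $U$ propagates to the special fiber. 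The hypothesis that both $X$ and $X_\lambda$ are good should enter precisely here, ensuring smoothness of the total family and controlling the limiting cycle class, so that the specialization yields the desired equivalence $W_p \sim W_q$ inside $\T \cap \T_\alpha$.
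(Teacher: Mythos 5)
Your strategy has a genuine gap, and it sits exactly where you flag it: the entire content of your argument is the deferred degeneration analysis. You would need to show that the surfaces $\T_\alpha \cap \T_{[p, r]}$ form a flat family over $r \in \tX$ extending across $r = p'$, that the relative cycle $W_p + [q, r]$ specializes to $W_q$ with no excess components, and---most seriously---that an algebraic equivalence of relative $1$-cycles over the open locus $U$ propagates to the special fiber. None of this is automatic: specializing algebraic equivalence to a special fiber of a family of surfaces requires an argument (e.g.\ properness and smoothness of the family plus a relative Picard-scheme argument that the difference class stays algebraically trivial), the fibers $\T_\alpha \cap \T_{[p, r]}$ need not stay smooth for special $r$, and the goodness of $X$ and $X_\lambda$ does not by itself deliver these properties. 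On top of that, your preliminary reduction (treat generic $\alpha$ first, then handle non-generic $\alpha$ ``by specialization along $\Sigma(X)$'') is a second unproven limit argument, whereas the lemma is asserted for a fixed $\alpha$ with no genericity built in.

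The idea you are missing is that no limit is needed at all, because algebraic equivalence is preserved under translation, which is an isomorphism of the ambient abelian variety. Write $\alpha = [q', r]$, so that the two embeddings of $\tX_\lambda$ in $\T \cap \T_\alpha$ are $W_{q'}$ and $W_r$. Choosing an auxiliary point $p \in \tX \minus \set{q, q', r, r'}$ and translating by $[p, q]$ carries $\T \cap \T_\alpha$ isomorphically onto $\T_{[p, q]} \cap \T_{[p, r]}$, and carries $W_{q'}$ and $W_r$ to $W_p$ and $W_p + [q, r]$ respectively; these are algebraically equivalent by \cref{manyIntersections}\cref{algequivalent}, and translating back gives the equivalence of $W_{q'}$ and $W_r$ in $\T \cap \T_\alpha$, hence in $\Xi \cap \Xi_\alpha$. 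This is precisely the paper's proof: your boundary configuration at $r = p'$ is not a degenerate limit to be reached by specialization but is isomorphic, via a translation by a class of the form $[\,\cdot\,,\cdot\,]$, to the generic configuration in which \cref{manyIntersections}\cref{algequivalent} applies verbatim.
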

\begin{proof}
	Pick $q, r \in \tX$ such that $\alpha = [q', r]$.
	The two embeddings of $\tX_\lambda$ in $\T \cap \T_\alpha$ are $W_{q'}$ and $W_r$.
	Given $p \in \tX \minus \set{q, q', r, r'}$, it follows that the embeddings of $\tX_\lambda$ in $(\T \cap \T_\alpha) + [p, q] = \T_{[p, q]} \cap \T_{[p, r]}$ are $W_{q'} + [p, q] = W_p$ and $W_r + [p, q] = W_p + [q, r]$, which are algebraically equivalent by \cref{manyIntersections}\cref{algequivalent}.
	Now translate back to $\Xi \cap \Xi_\alpha$.
\end{proof}

\subsection{Curve classes}
As a consequence there is a well-defined class $[\tX_\lambda] \in H^2(\Xi \cap \Xi_\alpha, \bZ)$.
It is invariant under the involution $\beta \mapsto \alpha - \beta$ on $\Xi \cap \Xi_\alpha$.
Since $\gamma_X \eq [\tX_\lambda] - \frac{1}{3} [\Xi]\in H^2(\Xi \cap \Xi_\alpha, \bQ)$ pushes forward to $0$ in $B$, it belongs to $\bW_\bQ^+$, where $\bW \subset H^2(\Xi \cap \Xi_\alpha, \bZ)$ is the primal cohomology for $\Xi \cap \Xi_\alpha \into B$.
Kr\"amer showed that $\bW^+ \iso \esix(-2)$ \cite[5.1]{Kramer2015}, and that $\gamma_X$ is a norm-minimizing element of the dual lattice \cite[Lemma~7.2]{Kramer2015}.
We refine his calculation using the argument of \cref{intersection numbers}, after establishing the following Lemma.

\begin{lemma}\label{modelCurves}
	Suppose $B$ is very general and $\alpha \in B$ is general.
	\begin{anumerate}
	\item\label{27 embeddings}
		If $X$ is an $\alpha$-curve, then $X$ and $X_\lambda$ are good.
	\item\label{triple breakdown}
		For $\alpha$-curves $X \tet Y$, the other curve in the tetragonal triple $(X, Y, Z)$ is also an $\alpha$-curve, and $[\tX_\lambda] + [\tY_\lambda] + [\tZ_\lambda] = [\Xi]$ in $\Xi \cap \Xi_\alpha$.
	\end{anumerate}
\end{lemma}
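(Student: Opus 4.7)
The plan is to address (a) by a dimension count on the Prym fiber, and (b) by mirroring the proof of \cref{triple theta sum}.

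For (a), since $B$ is general every smooth Prym-curve over $B$ is automatically good (as noted before the definition of good), so one only needs to ensure smoothness of $X$ and $X_\lambda$. The non-smooth locus $\mathcal{B} \subset \prymap^{-1}(B)$ is a proper closed subset of the $2$-dimensional Prym fiber (of dimension at most $1$), and because residuation is an automorphism of the fiber, the locus where $X$ or $X_\lambda$ fails to be smooth remains a proper closed subset. Consider the incidence correspondence $I \eq \{(\alpha, X) \in B \times \prymap^{-1}(B) : X \text{ is an } \alpha\text{-curve}\}$; the fiber of $I$ over a fixed $X$ is the image of the difference map $(p, q) \mapsto [p, q]$, a $2$-dimensional surface in $B$, so $\dim I = 4$. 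The image of $I \cap (B \times \mathcal{B})$ in $B$ therefore has dimension at most $3$, and a general $\alpha \in B$ avoids it, so all $27$ associated $\alpha$-curves (and their residuation partners) are good.

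For (b), since $X$ is an $\alpha$-curve I would write $\alpha = [p, q]$ with $p, q \in \tX$, and then choose $r, s \in \tX$ so that $\pi(p + q + r + s)$ is a reduced divisor of the $g_4^1$ on $X$ defining $X \tet Y$, with $p + q + r + s$ lying in the special subvariety $\tY \subset \tX^{(4)}$ (so $p + q + r + s'$ lies in the companion special subvariety $\tZ \subset \tX^{(4)}$). Setting $\T \eq \Tprym{8}(X)$, \cref{manyIntersections}\cref{tripleIntersection,subordinateCase} yield the cycle identity
\[
\T \cap \T_{[p, q]} \cap \T_{[p, r]} = W_p \cup V_{p + q + r + s} \cup V_{p + q + r + s'}
\]
in $\prym{8}{X}$.

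Next I would invoke a dimension-$4$ analog of \cref{surface isomorphism}: the Prym isomorphisms induced by $X \tet Y$ and $X \tet Z$ identify $V_{p + q + r + s}$ with a Brill-Noether curve $W^Y_{y_0} \subset \T_Y \eq \Tprym{8}(Y)$ and $V_{p + q + r + s'}$ with $W^Z_{z_0} \subset \T_Z \eq \Tprym{8}(Z)$, where $y_0 \in \tY$, $z_0 \in \tZ$ correspond to the divisors $p + q + r + s$, $p + q + r + s'$. Transporting the inclusion $V_{p + q + r + s'} \subset \T \cap \T_\alpha$ through the second identification gives $W^Z_{z_0} \subset \T_Z \cap (\T_Z)_\alpha$, so $Z$ is an $\alpha$-curve. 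Translating the cycle identity to $\Xi \cap \Xi_\alpha$ via compatible theta characteristics and applying \cref{betterAlgEquivalence}, the classes of the three components become $[\tX_\lambda]$, $[\tY_\lambda]$ and $[\tZ_\lambda]$. On the other hand, the algebraic equivalence $\T_{[p, r]} \sim \T$ on $\prym{8}{X}$ restricts to $\T \cap \T_{[p, q]} \cap \T_{[p, r]} \sim \T \cap \T_{[p, q]} \cap \T$ on $\T \cap \T_{[p, q]}$, so the sum of the three classes equals $[\T]|_{\T \cap \T_{[p, q]}}$, whose translate in $H^2(\Xi \cap \Xi_\alpha, \bZ)$ is $[\Xi]$.

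The main technical obstacle is the dimension-$4$ analog of \cref{surface isomorphism}, namely the claim that the Prym isomorphism induced by $X \tet Y$ sends the special curve $V_{p + q + r + s}$ onto the Brill-Noether curve $W^Y_{y_0}$. The strategy should mirror the proof of \cref{surface isomorphism}: use the explicit description of $\varphi : \prym{0}{X} \iso \prym{0}{Y}$ from \cref{tetragonal} to produce a set-theoretic inclusion, then upgrade to equality by comparing numerical classes of $1$-cycles in $\prym{0}{Y}$.
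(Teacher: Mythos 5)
Your part (a) is essentially the paper's own argument: the non-good locus in the two-dimensional Prym fiber has dimension at most one, each $\Sigma(X) = \set{[p, q] \mid p, q \in \tX}$ is a surface, so the union over the bad curves (and their residuation partners) is at most a threefold in $B$, and a general $\alpha$ avoids it. Your outline of the second half of (b) — transporting the decomposition via the tetragonal isomorphisms and running the argument of \cref{triple theta sum} — also matches the paper. One remark: the ``dimension-$4$ analog of \cref{surface isomorphism}'' that you single out as the main technical obstacle is not actually an obstacle, since \cref{surface isomorphism}, \cref{manyIntersections} and the argument of \cref{triple theta sum} are all stated and proved for good Prym-curves of arbitrary $g$, hence apply verbatim with $g = 4$ (where $W_p$ and $V_P$ are curves).

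The genuine gap in (b) is the sentence ``choose $r, s \in \tX$ so that $\pi(p + q + r + s)$ is a reduced divisor of the $g_4^1$ \dots with $p + q + r + s$ lying in $\tY$.'' For a given representation $\alpha = [p, q]$ there is no reason such $r, s$ exist: a general pair of points of $X$ is subordinate to no divisor of the $g_4^1$ (the subordinate locus is only a curve in $X^{(2)}$), and $\alpha$ admits only finitely many representations $[p, q]$ since the map $\tX^{(2)} \to \Sigma(X)$ is generically finite. In fact the locus of $\alpha \in \Sigma(X)$ admitting a representation adapted to $X \tet Y$ is only a curve, namely (essentially) $\Sigma(X) \cap \Sigma(Y)$, and by \cite[5.9]{Izadi-A4-1995} only a \emph{general} point of that curve is ``good'' for $X \tet Y$ in the required sense (reduced divisor included). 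So the existence of a suitable representation is a nontrivial genericity statement about $\alpha$, and the paper's proof devotes its middle paragraph to it: the points of $\Sigma(X)$ that are bad for some tetragonally related $Y$ sweep out at most a threefold as $X$ ranges over the fiber, hence a general $\alpha \in B$ is good for every pair of $\alpha$-curves $X \tet Y$ it determines. Without this step you cannot set up the decomposition $\T \cap \T_{[p, q]} \cap \T_{[p, r]} = W_p \cup V_{p + q + r + s} \cup V_{p + q + r + s'}$ for the given $\alpha$, and neither the claim that $Z$ is an $\alpha$-curve nor the class identity $[\tX_\lambda] + [\tY_\lambda] + [\tZ_\lambda] = [\Xi]$ follows.
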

\begin{proof}
	Since $B$ is general, the fiber of $\prymap$ over $B$ contains a dense open subset of good Prym-curves \cite[3.3, 3.10]{Izadi-A4-1995}.
	The complement $Z$ is therefore (at most) one-dimensional.
	Recall that $X$ is an $\alpha$-curve if and only if $\alpha$ belongs to the surface $\Sigma(X) \eq \set{[p, q] \mid p, q \in \tX}$ \cite[3.16]{Izadi-A4-1995}.
	For dimension reasons, we may assume that $\alpha \notin \cup_{X \in Z \cup \lambda(Z)} \Sigma(X)$, which gives \cref{27 embeddings}.

	For any $\alpha \in B$ and Prym-curves $X \tet Y$ such that $\alpha \in \Sigma(X) \cap \Sigma(Y)$, call $\alpha$ \emph{good} for $X \tet Y$ if there is a reduced divisor $p + q + r + s \in \tY$ such that $\alpha = [p, q]$.
	According to \cite[5.9]{Izadi-A4-1995}, $\Sigma(X) \cap \Sigma(Y)$ has (possibly impure) dimension one, and a general point on it is good for $X \tet Y$.
	Hence the set of elements of $\Sigma (X)$ which are not good for $X \tet Y$ for some $Y$ is of dimension $\leq 1$. The union of these sets for all Prym-curves $X$ is at most a threefold in $B$. So a general $\alpha \in B$ is good for all pairs $X \tet Y$ such that $\alpha \in \Sigma(X) \cap \Sigma(Y)$.

	Given $\alpha$-curves $X \tet Y$, we may pick $p + q + r + s \in \tY$ as above, and form the tetragonal triple $(X, Y, Z)$.
	The argument of \cref{triple theta sum} completes the proof of \cref{triple breakdown}.
\end{proof}

\begin{proposition}\label{intersection numbers redux}
	Suppose $\alpha \in B$ is general.
	If $X$ and $Y$ are $\alpha$-curves, then
	\[
		[\tX_\lambda] . [\tY_\lambda] = \begin{dcases}
			0 & \text{ if } X = Y \\
			4 & \text{ if } X \tet Y, \\
			2 & \text{ otherwise.}
		\end{dcases}
	\]
\end{proposition}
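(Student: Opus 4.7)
The plan is to mirror the argument of \cref{intersection numbers}, invoking Kr\"amer's lattice calculation to pin down the self-intersection. Set $a \eq [\tX_\lambda]^2$, $b \eq [\tX_\lambda] \cdot [\tY_\lambda]$ for $X \tet Y$, and $c \eq [\tX_\lambda] \cdot [\tY_\lambda]$ for distinct non-tetragonally related $\alpha$-curves $X, Y$. I aim to show $(a, b, c) = (0, 4, 2)$.

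First I would compute $[\tX_\lambda] \cdot [\Xi]|_{\Xi \cap \Xi_\alpha} = 8$. Indeed, since $\gamma_X = [\tX_\lambda] - \tfrac{1}{3}[\Xi]$ pushes forward to zero in $B$, the class of $\tX_\lambda$ in $B$ equals $\tfrac{1}{3}[\Xi]^3$, so by the projection formula the intersection on $\Xi \cap \Xi_\alpha$ equals $\tfrac{1}{3}[\Xi]^4 = 8$. Squaring the identity $[\tX_\lambda] + [\tY_\lambda] + [\tZ_\lambda] = [\Xi]$ from \cref{modelCurves}\cref{triple breakdown} and using $[\Xi]^2 = 24$ on $\Xi \cap \Xi_\alpha$ then yields $3a + 6b = 24$, i.e., $a + 2b = 8$.

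To derive a second relation and to establish that $b$ and $c$ are each constant across their respective pair-types, I would run the ``skew-line'' argument of \cref{intersection numbers}. For $X \not\tet Y$, pick $Z_1$ tetragonally related to both and form triples $(X, Z_1, X_1)$ and $(Y, Z_1, Y_1)$; pairing each of $X, X_1, Y, Y_1$ against either triple (each sum equals $[\Xi]$ and so intersects to $8$) produces
\[
	\pair{X, Y} = \pair{X_1, Y_1}, \qquad \pair{X, Y_1} = \pair{X_1, Y}.
\]
Labelling the $27$ $\alpha$-curves by the $27$ lines on a cubic surface and chasing these relations across both kinds of tetragonal triples, $(E_i, F_{ij}, G_j)$ and $(F_{ij}, F_{kl}, F_{mn})$, exactly as in \cref{intersection numbers}, yields the claimed constancy. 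Summing $[\tX_\lambda] + [\tY_\lambda] + [\tZ_\lambda] = [\Xi]$ over all $45$ tetragonal triples (each $\alpha$-curve lies in $5$) gives $\sum_X [\tX_\lambda] = 9[\Xi]$; pairing with $[\tX_\lambda]$ produces $a + 10b + 16c = 72$, which combined with $a + 2b = 8$ gives $b + 2c = 8$.

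The hard part will be pinning down $a$: the two linear relations above leave a one-parameter family of candidate triples. I would resolve this by invoking Kr\"amer's identification $\bW^+ \iso \esix(-2)$ together with the statement that $\gamma_X$ is a norm-minimizing element of the dual lattice \cite[\S5.1, Lemma~7.2]{Kramer2015}: the minimum of the induced form on $\esix(-2)^*$ equals $-8/3$ (attained at the $27$ minuscule-weight vectors), so $\gamma_X^2 = -8/3$. On the other hand, the values computed above give directly
\[
	\gamma_X^2 = a - \tfrac{2}{3} \cdot 8 + \tfrac{1}{9} \cdot 24 = a - \tfrac{8}{3}.
\]
Hence $a = 0$, and the preceding relations force $b = 4$ and $c = 2$.
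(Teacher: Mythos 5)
Your skeleton (the $27$-lines chase) is the right one, but the step that pins down the self-intersection contains a genuine error. The dual lattice of $\esix(-2)$, i.e.\ $\set{x : x\cdot \esix(-2)\subset\bZ}$ with the induced form, is $\tfrac12\esix^*$ equipped with $-2\langle\cdot,\cdot\rangle$; its minimal norm is $-2/3$, attained at \emph{halves} of the minuscule weights, not $-8/3$. The value $-8/3$ is the minimum of $\esix^*(-2)$ (the weight lattice with the doubled form), a proper sublattice of the dual. So ``$\gamma_X$ is norm-minimizing in the dual lattice'' does not yield $\gamma_X^2=-8/3$: read literally it would give $\gamma_X^2=-2/3$, hence $a=2$, $b=3$, $c=5/2$, which is absurd and shows your paraphrase of Kr\"amer cannot carry the argument. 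To extract $-8/3$ you would need the finer fact that $\gamma_X$ lies in $\esix^*(-2)$, equivalently that $[\tX_\lambda]$ pairs \emph{evenly} with every class in $\bW^+$ --- a parity statement you neither formulate nor prove, and which is uncomfortably close to the evenness of the very intersection numbers being computed. (The paper cites Kr\"amer's Lemma~7.2 only as context and explicitly ``refines'' it; it is not used in the proof.) Two further points you leave implicit: the chase and \cref{modelCurves} require $B$ very general, so one still needs the specialization remark (constancy of intersection numbers in smooth families) to cover general $(B,\Xi)$; and your relation $a+2b=8$ presupposes that $[\tX_\lambda]^2$ is independent of $X$, which your setup does not justify.

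The paper's route to $a$ is far more elementary and bypasses Kr\"amer entirely: for general $\alpha$ the surface $\Xi\cap\Xi_\alpha$ is smooth, $\tX_\lambda$ is a smooth curve of genus $9$ whose class in $B$ is $\tfrac13\xi^3$, so $K_{\Xi\cap\Xi_\alpha}\cdot[\tX_\lambda]=2\,\xi\cdot[\tX_\lambda]=16$ and adjunction gives $[\tX_\lambda]^2=(2\cdot 9-2)-16=0$; with $a=0$ in hand, the argument of \cref{intersection numbers} (pairing the triple identity of \cref{modelCurves} with each member, then the skew-line relations) forces $b=4$ and $c=2$, with no global counting over the $45$ triples and no lattice-theoretic input needed. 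If you replace your Kr\"amer step by this adjunction computation, the rest of your proposal goes through.
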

\begin{proof}
	Since $\alpha$ is general, $\Xi \cap \Xi_\alpha$ is smooth, so the adjunction formula shows that $[\tX_\lambda] . [\tX_\lambda] = 0$.
	
	For the other cases, suppose that $B$ is very general.
	The tetragonal correspondence between $\alpha$-curves is isomorphic to the incidence correspondence between the $27$ lines on a smooth cubic surface \cite[5.9, 5.12, 6.8]{Izadi-A4-1995}.
	We may therefore argue as in \cref{intersection numbers} using \cref{modelCurves}.

	Since intersection numbers are constant in smooth families, the result holds even when $(B, \Xi)$ has extra automorphisms (but remains sufficiently general).
\end{proof}

\begin{corollary}\label{Wgenerators}
	If $\alpha \in B$ is general, then the invariant primal cohomology $\bW^+ \subset H^2(\Xi \cap \Xi_\alpha, \bZ)$ is freely generated by the classes $\delta_i \eq [\tX_{i \lambda}] - [\tY_\lambda]$, for any collection of $\alpha$-curves $X_1, \dots, X_6, Y$ such that the $X_i$ are mutually tetragonally unrelated and $Y$ is related to exactly two of the $X_i$.
\end{corollary}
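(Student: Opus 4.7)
The strategy is to verify that $\delta_1, \dots, \delta_6 \in \bW^+$, compute their Gram matrix using \cref{intersection numbers redux}, and match its determinant against the discriminant of $\bW^+ \iso \esix(-2)$.

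First I would check that each $\delta_i$ lies in $\bW^+$. The class $\delta_i = [\tX_{i\lambda}] - [\tY_\lambda]$ is integer; it is fixed by the involution $\beta \mapsto \alpha - \beta$ because each $[\tX_\lambda]$ is; and it equals $\gamma_{X_i} - \gamma_Y$, which lies in $\bW^+_\bQ$ by the discussion preceding \cref{modelCurves}. Since $\delta_i$ is integer and in $\bW^+_\bQ$, it lies in $\bW^+$.

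Next I would label the $X_i$ so that $Y$ is tetragonally related to exactly $X_1$ and $X_2$. Expanding
\[
	\delta_i \cdot \delta_j = [X_i] \cdot [X_j] - [X_i] \cdot [Y] - [X_j] \cdot [Y] + [Y]^2
\]
and applying \cref{intersection numbers redux}, the Gram matrix $G = (\delta_i \cdot \delta_j)$ decomposes according to the partition $\{1,2\} \sqcup \{3,4,5,6\}$: the diagonal is $(-8, -8, -4, -4, -4, -4)$; the off-diagonal entries equal $-6$ within the first block, $-2$ within the second block, and $-4$ across the blocks. A Schur-complement computation (using that the lower-right $4 \times 4$ block equals $-2(I_4 + J_4)$, with $J_4$ the all-ones matrix) yields $\det G = 192$.

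By Kr\"amer's theorem \cite{Kramer2015}, $\bW^+ \iso \esix(-2)$ has discriminant $2^6 \cdot 3 = 192$. Hence $\det G$ equals the discriminant of the ambient rank-$6$ lattice $\bW^+$, which forces the index $[\bW^+ : \langle \delta_1, \dots, \delta_6 \rangle]$ to be $1$. In particular the $\delta_i$ are linearly independent over $\bZ$ and freely generate $\bW^+$. The only concrete computation is the determinant of $G$, and this admits a clean block decomposition, so there is no real conceptual obstacle; the argument is in essence a discriminant match, parallel in spirit to the intersection-number argument of \cref{intersection numbers}.
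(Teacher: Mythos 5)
Your proposal is correct and follows essentially the same route as the paper: both use \cref{intersection numbers redux} to form the Gram matrix of the $\delta_i$, observe that its determinant is $192$, and match this against the discriminant of $\bW^+ \iso \esix(-2)$ from Kr\"amer's theorem to force the index of $\langle \delta_1, \dots, \delta_6 \rangle$ in $\bW^+$ to be $1$. The only cosmetic difference is that the paper recognizes the matrix abstractly as a Gram matrix of $\esix(-2)$ via the identity $[\tX_\lambda] . [\tY_\lambda] = 2(E . F + 1)$ for the corresponding lines, whereas you compute the determinant directly by a Schur complement; both computations agree.
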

\begin{proof}
	If $X$ and $Y$ are $\alpha$-curves corresponding to lines $E$ and $F$ on a cubic surface, then \cref{intersection numbers redux} says that $[\tX_\lambda] . [\tY_\lambda] = 2 (E . F + 1)$.
	Thus, the matrix with entries $\delta_i . \delta_j$ is a Gram matrix for $\esix(-2)$.
	In particular, its determinant is the discriminant of $\bW^+$, namely $192$. %TODO: check discriminant
	The map $\bZ^6 \to \bW^+$ defined by the $\delta_i$ has to be invertible for this to hold.
\end{proof}

\section{The one-parameter family of abelian fivefolds}\label{secfamily}

We summarize the construction of the family of theta divisors from \cite{IzadiWang2018}.
The goal is to produce a rank one degeneration with smooth total space.

\subsection{Enriques and K3 surfaces}
Let $R$ be a very general Enriques surface and $\rho : \tR \to R$ the K3 \'etale double cover corresponding to the canonical class (which is $2$-torsion) $K_R \in \Pic(R)$.
Let $H$ be a very ample line bundle on $R$ with $H^2 = 10$.
A general element in the linear system $\abs{H} \iso \bP^5$ is a smooth curve of genus $6;$ such curves are parameterized by the Zariski open subset $\abs{H} \minus D$, where $D$ is the dual variety of the embedding of $R$ in $\dual{\abs{H}}$.
For each curve $C \in \abs{H} \minus D$, we obtain a nontrivial \'etale double cover $\tC \eq \rho^{-1}(C) \to C$.
Associating to such a cover its Prym variety $\prym{0}{C}$ defines a morphism from $\abs{H} \minus D$ to $\cA_5$:
\begin{tikzcenter}[node distance = 10ex and 4em]
	\node (H) {$\abs{H} \minus D$};
	\node[below right = of H] (A5) {$\cA_5$\nowidth{.}};
	\node[above right = of A5] (R6) {$\cR_6$};

	\draw[-To]
		(H) edge (A5)
		(H) edge node {$\mu_H$} (R6)
		(R6) edge node {$\prymap$} (A5);
\end{tikzcenter}
The linear systems $\abs{H}$ form a projective bundle $\cH$ over the moduli space of Enriques surfaces, and the maps $\mu_H$ are restrictions of a rational map $\mu : \cH \to \cR_6$.
Mori and Mukai \cite{MoriMukai83} showed that $\mu$ is dominant.

\subsection{A family of curves}
Suppose $T \iso \bP^1 \subset \abs{H}$ is a Lefschetz pencil, i.e., it is transverse to the dual variety $D$.
Then the singular curves of the pencil consist of finitely many irreducible nodal curves.
Denote by $\cC \eq \Bl_{10} R$ (resp. $\tcC \eq \Bl_{20} \tR$) the blow-up of $R$ (resp.\ $\tR$) along the base locus of $T$ (resp.\ $\rho^* T$).
We obtain a family of irreducible \'etale double covers parameterized by $T$:
\begin{tikzcenter}[node distance = 10ex and 4em]
	\node (tC) {$\tcC$};
	\node[below right = of tC] (T) {$T$\nowidth{.}};
	\node[above right = of T] (C) {$\cC$};

	\draw[-To]
		(tC) edge (T)
		(tC) edge node {$\pi$} (C)
		(C) edge (T);
\end{tikzcenter}

\subsection{Singular fibers}\label{normalized fibers}
There are $42$ points $t_1, \dots, t_{42} \in T$ where the fiber $C_i \eq \cC_{t_i}$ of the family $\cC \to T$ is singular.
The Prym varieties of the double covers in this family are well-defined as semi-abelian varieties.
They are abelian at the smooth fibers and have a rank $1$ toric part at the singular fibers.
This family of Prym varieties and their theta divisors can be compactified to families
\begin{tikzcenter}[node distance = 10ex and 4em]
	\node (Th) {$\T$};
	\node[below right = of Th] (T) {$T$};
	\node[above right = of T] (A) {$A$};

	\draw[-To]
		(Th) edge (T)
		(A) edge (T);

	\draw[In-To]
		(Th) edge (A);
\end{tikzcenter}
with smooth total spaces.
The fibers $\T_i \subset A_i$ over $t_i$ have the following descriptions.

Denote by $(B_i, \Xi_i)$ the Prym variety of $\tX_i \to X_i$, where $X_i$ (resp.\ $\tX_i$) is the normalization of $C_i$ (resp.\ $\tC_i$).
It is a ppav of dimension $4$.
The semi-abelian Prym variety of $\tC_i \to C_i$ is an extension of $B_i$ by $\bC^\times$.
Such an extension is determined by a point $\beta \in B_i$, well-defined up to $\pm 1$.
The normalization $P_i$ of $A_i$ is the $\bP^1$-bundle over $B_i$ associated to this semi-abelian variety, and $A_i$ is obtained from $P_i$ by identifying the $\infty$-section $B_\infty \iso B_i$ with the $0$-section $B_0 \iso B_i$ after translation by $\beta$.
The normalization of $\T_i$ is the blow-up $\tB_i \to B_i$ along $\Xi_i \cap \Xi_{i \beta}$.
To recover $\T_i$ from $\tB_i$, one glues the proper transforms of $\Xi_i$ and $\Xi_{i \beta}$ after translation by $\beta$.

\subsection{Correcting the degree}
In order to construct surfaces in a fiber $\T_t$, it it more natural to work in degree $10$, i.e., we should have $A_t = \prym{10}{\cC_t}$ and $\T_t = \T(\cC_t)$.
The construction of these families is similar to the degree zero case \cite[{\S}3.5]{ArbarelloFerrettiSacca}.
To check that their total spaces remain smooth, one can use \'etale-local (or local analytic) sections of $\T \to T$ to show that the families in degree $0$ and $10$ are \'etale-locally (or locally analytically) isomorphic.
Alternatively, if one is willing to throw away the smooth fibers $A_t$ for which $\T_t$ is singular (which is harmless for our purposes), it is easy to prove directly (as in \cite[2.5]{IzadiWang2018}) that $\T$ is nonsingular.

\section{Families of surfaces}\label{the family of surfaces}

We construct families of special surfaces in the fibers of $\T \to T$.

\subsection{Nets of degree $6$}
Suppose $C \in \ocM_6$ is irreducible and has at most one node.
Given a $g^2_6$ on $C$, i.e., a net of degree $6$, and an \'etale double cover $\tC \to C$, one can define special surfaces $S \subset \tC^{(6)}$ as in the introduction.
The relative version of this does not work over $T$ because the fibers of $\cC \to T$ have no canonical choice of $g_6^2$.
We will fix this by passing to a new base $U$; the map $U \to T$ should be unramified in order to preserve the smoothness of $\T$.
There is a non-empty Zariski open subset $\cU^2_6 \subset \ocM_6$ parameterizing curves $C$ with $\card{G_6^2(C)} = 5$.
Let $\Delta_0 \subset \ocM_6$ be the boundary component whose generic points parameterize irreducible curves.

\begin{lemma}\label{five g26}
The intersection $\cU^2_6 \cap \Delta_0$ is not empty.
\end{lemma}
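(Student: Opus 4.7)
The plan is to exhibit an explicit irreducible nodal curve $C_0 \in \Delta_0$ of arithmetic genus $6$ with $\card{G_6^2(C_0)} = 5$. Take $X$ a general smooth curve of genus $5$, pick general points $p, q \in X$, and set $C_0 \eq X/(p \sim q)$, with normalization $\nu : X \to C_0$ collapsing $p$ and $q$ to a single node $n$.

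First I would describe $W_6^2(C_0)$. A degree-$6$ line bundle $M$ on $C_0$ corresponds to a pair $(\tilde{M}, \phi)$ with $\tilde{M} \eq \nu^* M \in \Pic^6(X)$ and a gluing $\phi : \tilde{M}_p \isoto \tilde{M}_q$, and the exact sequence $0 \to M \to \nu_* \tilde{M} \to k(n) \to 0$ gives $h^0(C_0, M) \in \set{h^0(X, \tilde{M}), h^0(X, \tilde{M}) - 1}$, with the larger value occurring precisely when the functional $s \mapsto s(p) - \phi(s(q))$ on $H^0(X, \tilde{M})$ vanishes identically. Since $X$ is general of genus $5$, both $W_6^3(X)$ and $W_5^2(X)$ are empty (negative Brill-Noether numbers), so the only contributions to $W_6^2(C_0)$ come from $\tilde{M} \in W_6^2(X)$ with $h^0(\tilde{M}) = 3$ together with the unique gluing $\phi$ compatible with evaluation at $(p, q)$, and no non-locally-free torsion-free sheaves contribute. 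In particular $G_6^2(C_0) = W_6^2(C_0)$.

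The key step is to count such $\tilde{M}$ geometrically. Identifying $W_6^2(X) \iso X^{(2)}$ via $D \mapsto K_X - D$ and using the canonical embedding $X \into \bP^4$ (of degree $8$), sections of $\tilde{M}$ are hyperplanes through the chord $\overline{D_1 D_2}$, so $\phi_{\tilde{M}}$ is projection from this chord. Compatibility of $\phi$ with $H^0(X, \tilde{M})$ becomes $\phi_{\tilde{M}}(p) = \phi_{\tilde{M}}(q)$, equivalently the coplanarity of $p, q, D_1, D_2$ in $\bP^4$, equivalently that the projection $\pi_{p q} : X \to \bP^2$ from the chord $\overline{p q}$ collapses $\set{D_1, D_2}$ to a single point. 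Since $X$ is general, it is neither hyperelliptic, trigonal, nor bielliptic, so $\pi_{p q}$ is birational onto a plane sextic; the arithmetic genus of a plane sextic is $10$, and our image has geometric genus $5$, forcing $\delta = 5$. For generic $p, q$ these singularities are $5$ distinct ordinary nodes, each yielding a distinct $D \in X^{(2)}$, and hence a distinct $\tilde{M} \in W_6^2(X)$ admitting a compatible gluing.

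I expect the main obstacle to be confirming that for general $p, q$ the plane sextic $\pi_{p q}(X)$ has exactly $5$ distinct ordinary nodes (rather than fewer, more degenerate singularities with the same $\delta$-invariant); this is easy to overlook, because a naive intersection-theoretic count inside $\Pic^4(X)$ predicts an empty locus, while the correct answer is $5$ by excess. Once this is established via the classical fact that a generic $g_6^2$ on a generic genus-$5$ curve cuts out a $5$-nodal plane sextic (applied to $D = p + q \in X^{(2)} \iso W_6^2(X)$), one concludes that $W_6^2(C_0)$ consists of exactly $5$ reduced points, each a complete $g_6^2$, so $\card{G_6^2(C_0)} = 5$ and $[C_0] \in \cU^2_6 \cap \Delta_0$.
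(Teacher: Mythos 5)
Your proposal is correct and is essentially the paper's own argument: both analyses pull a degree-$6$ bundle on the nodal curve back to the genus-$5$ normalization $X$, use Clifford/Brill--Noether genericity (no $g_6^3$, no $g_5^2$) to reduce to $L \in W_6^2(X)$ whose sections satisfy the gluing condition at $p,q$, i.e.\ $h^0(L(-p-q))=2$ with a unique compatible gluing, and then count such $L$ as the five nodes of the plane sextic image of $X$ under $\abs{\omega_X(-p-q)}$, invoking the classical genericity statement that this image is nodal. The only differences are cosmetic: you construct the nodal curve by gluing general $p\sim q$ and phrase the count as projection of the canonical curve from the chord through $p$ and $q$, while the paper starts from a general point of $\Delta_0$ and uses the Oda--Seshadri description of $\Pic^6(C)\to\Pic^6(X)$.
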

\begin{proof}
	Suppose $C \in \Delta_0$ is general and let $X \tou\nu C$ be its normalization.
	Let $p, q \in X$ be the points over the node $r \in C$.
	Recall that $\Pic^6(C) \tou{\nu^*} \Pic^6(X)$ is a $\bC^\times$-torsor with fiber
	\[
		\bP(\res{L}{p} \oplus \res{L}{q}) \minus \set{\res{L}{p} \oplus 0, 0 \oplus \res{L}{q}}
	\]
	over $L \in \Pic^6(X)$ \cite[Corollary~12.4]{OdaSeshadri1979}.
	Given $L' \in \Pic^6(C)$ with $\nu^* L' \iso L$, the corresponding line in $\res{L}{p} \oplus \res{L}{q}$ is the kernel of the subtraction map $\res{L}{p} \oplus \res{L}{q} \to \res{L'}{r}$.
	This map is the evaluation at the node of the surjection in the following exact sequence, obtained by tensoring $\cO_C \to \nu_* \cO_X$ with $L'$:
	\begin{equation}\label{twisted nodal curve sequence}
		0 \to L' \to \nu_* L \to \res{L'}{r} \to 0.
	\end{equation}

	If $h^0(L') = 3$ then \cref{twisted nodal curve sequence} forces $h^0(L) = 3$ by Clifford's theorem and the genericity of $X$.
	Thus we may identify the maps $H^0(X, L) \to \res{L}{w}$ and $H^0(C, L') \to \res{L'}{\nu(w)}$ for each $w \in X$.
	In particular
	\[
		H^0(X, L(-p - q)) = H^0(X, L(-p)) = H^0(X, L(-q))
	\]
	is two-dimensional (as $X$ is neither hyperelliptic nor trigonal, it has no $g_5^2$).

	Conversely, if $h^0(L(-p - q)) = h^0(L) - 1 = 2$, then exactly one $L'$ lying over $L$ satisfies $h^0(L') = 3$.
	Indeed, if $h^0(L') = h^0(L)$ then the surjection in \cref{twisted nodal curve sequence} vanishes on global sections, i.e., the composition
	\[
		H^0(X, L) \to \res{L}{p} \oplus \res{L}{q} \to \res{L'}{r}
	\]
	is zero.
	Since $h^0(L(-p - q)) = 2$, the above sequence must be exact, so the line corresponding to $L'$ is unique (and it is neither of the summands, because $X$ has no $g_5^2$).

	It remains to show that exactly five $L \in W_6^2(X)$ satisfy $h^0(L(-p - q)) = 2$.
	On a general smooth curve $X$ of genus five, every $g_6^2$ has the form $\abs{\omega_X(-D)}$ for a unique $D \in X^{(2)}$.
	The pair $(X, \omega_X(-p - q))$ is general, so the image of $X$ in $\dual{\abs{\omega_X(-p - q)}}$ is a nodal sextic \cite[XXI, {\S}10]{ACG}.
	By the genus formula it has five nodes, so there are five $D \in X^{(2)}$ such that $h^0(\omega_X(-D - p - q)) = 2$, giving five choices for $L \eq \omega_X(-D)$.
\end{proof}

\subsection{First base change}
If $T \subset \abs{H}$ is sufficiently general, then \cref{five g26} implies that there is a Zariski open subset $T^\circ \subset T$ such that $\cC_t \in \cU_6^2$ for all $t \in T^\circ$ and $C \eq \cC_0$ is singular for a unique point $0 \in T^\circ$.
Let $X \tou{\nu} C$ be the normalization.
By an argument analogous to \cite[Lemma 2.3]{IzadiWang2018}, we may assume that the five pairs $(X, \nu^* g_6^2)$ are also general.
Let $U \eq \cG_6^2(\cC_{T^\circ})$ parameterize nets of degree $6$ on the fibers of $\cC_{T^\circ} \eq \cC \times_T T^\circ \to T^\circ$.
It is \'etale of degree $5$ over $T^\circ$.

\subsection{Surfaces of divisors}
Each fiber of the family $\cC_U \to U$ has a canonical $g_6^2$.
These linear systems form a $\bP^2$-bundle $\cP \to U$ embedded in the relative symmetric
power $\cC_U^{(6)}$.  Put $\cS \eq \cP \times_{\cC_U^{(6)}} \tcC_U^{(6)}$.
%The family of surfaces with fibers $S$ as above is defined by the cartesian diagram
%\begin{tikzcenter}[node distance = 10ex and 5em]
%	\node (S) {$\cS$};
%	\node[below = of S] (P) {$\cP$};
%	\node[right = of S] (tC) {$\tcC_U^{(6)}$};
%	\node[below = of tC] (C) {$\cC_U^{(6)}$};
%
%	\draw[-To]
%		(S) edge (P)
%		(tC) edge (C);
%
%	\draw[In-To]
%		(S) edge (tC)
%		(P) edge (C);
%\end{tikzcenter}

\subsection{Second base change}
The map $\cS \to U$ naturally factors as $\cS \to \tU \to U$, where $\tU$ is the double cover of $U$ parameterizing the connected components of the fibers of $\cS \to U$.
From now on we think of $\cS$ as a family over $\tU$; for instance $\cS_u \eq \cS \times_\tU \set{u}$ is connected for $u \in \tU$.
The following fact ensures that $\tU \to U$ is unramified, so that $\T_\tU$ is nonsingular:

\begin{lemma}\label{lemadmissible}
	For every $g_6^2$ on $C$, the corresponding fiber of $\cS$ in $\tC^{(6)}$ has two connected components.
\end{lemma}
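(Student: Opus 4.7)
The plan is to decompose $\cS_u$ via the norm map of the double cover $\pi \colon \tC \to C$ and use a parity argument to distinguish two components.

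Let $L \in \Pic^6(C)$ be the class of the complete linear system $g_6^2 = |L|$. The composition
\[
	\cS_u \into \tC^{(6)} \lra \Pic^6(\tC) \tou{\nm} \Pic^6(C)
\]
is constant with value $L$, since $\pi_*(\tilde D) \in |L|$ for every $\tilde D \in \cS_u$. Hence the Abel-Jacobi morphism sends $\cS_u$ into $\nm^{-1}(L)$, which is a torsor for $\ker(\nm) = \cP_0(C) \amalg \cP_0^-(C)$ and so has exactly two connected components $N^+$ and $N^-$. Let $\cS_u^\pm$ denote the preimages; this gives an a priori decomposition $\cS_u = \cS_u^+ \amalg \cS_u^-$ into open-and-closed subschemes. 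To see both halves are non-empty, pick a reduced $D = p_1 + \cdots + p_6 \in g_6^2$ avoiding the node of $C$: the $2^6 = 64$ preimages $\tilde D_\epsilon = \sum_i \sigma^{\epsilon_i}(\tilde p_i)$, for $\epsilon \in \{0,1\}^6$ and a fixed choice $\tilde p_i \in \pi^{-1}(p_i)$, have class differing from the reference lift by $|\epsilon|$ elements of $\cP_0^-(C)$, since each individual flip contributes $\sigma(\tilde p_i) - \tilde p_i \in \cP_0^-(C)$. So $\tilde D_\epsilon$ lies in $N^+$ or $N^-$ according to the parity of $|\epsilon|$, and exactly $32$ lifts populate each half.

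It remains to show that each $\cS_u^\pm$ is itself connected. I would argue via the Stein factorization of $\cS \to U$: writing it as $\cS \to \tU \to U$, by construction $\cS \to \tU$ has geometrically connected fibres and $\tU \to U$ is finite of generic degree $2$ (the smooth-fibre case of the lemma is classical, compare the discussion of special subvarieties in \cite{Beauville82}). By flatness over the smooth one-dimensional base $U$, the scheme-theoretic fibre $\tU_u$ has length $2$; the parity argument above produces two distinct geometric points in this fibre, which forces it to be the disjoint union of two reduced points. Thus $\tU \to U$ is étale at $u$, and the two connected components of $\cS_u$ are precisely $\cS_u^+$ and $\cS_u^-$, each connected.

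The principal obstacle is preventing the two components of $\cS \to U$ on nearby smooth fibres from colliding over the nodal point. The parity distinction via $N^+$ versus $N^-$ is the mechanism that rules this out; the remainder of the argument is just bookkeeping via Stein factorization and flatness.
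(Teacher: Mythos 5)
Your parity mechanism breaks down exactly where the danger lies. On the special fiber $C$ is nodal and $\tC$ has two nodes, so the Abel--Jacobi map $\tC^{(6)} \to \Pic^6(\tC)$ is only defined on the open locus of divisors avoiding the nodes: a point of the symmetric product whose support meets a node does not determine a line bundle, and the classes $\cO_\tC(\tilde p - \tilde p')$ you use for the parity count degenerate as $\tilde p$ approaches a node. Divisors through the nodes genuinely occur in the fiber you are studying (the sections of the $g_6^2$ vanishing at the node of $C$ form a pencil, and their preimages in $\cS_u$ are supported partly at the nodes of $\tC$). Hence pulling back the two components of $\nm^{-1}(L)$ only decomposes the open part of $\cS_u$ away from the node locus; it does not give an open-and-closed decomposition of all of $\cS_u$, and the two sheets could a priori meet precisely along the excluded locus. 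Passing to the compactified Jacobian does not repair this cheaply, since the closures of the even and odd components of $\nm^{-1}(L)$ can meet along boundary strata. In addition, your Stein-factorization step is close to circular: $\tU$ is defined in the paper as the cover parameterizing connected components of the fibers of $\cS \to U$, and the claim that $\tU_u$ has length $2$ (equivalently that $f_*\cO_\cS$ is locally free of rank $2$ at $u$) amounts to $h^0(\cO_{\cS_u}) = 2$, which is essentially the statement being proved; flatness of $\cS \to U$ alone does not give it, because $f_*\cO_\cS$ need not commute with base change.

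The paper's proof attacks exactly the locus your argument omits. It first identifies the fiber in $\tC^{(6)}$ with the image of the special surface in $\tX^{(6)}$ attached to $(\tX, \nu^* g_6^2)$, using \cref{twisted nodal curve sequence}; that surface has two connected components by the classical smooth-curve theory. The real content is then that these two components remain smooth and their images in $\tC^{(6)}$ are \emph{disjoint}, which is guaranteed by Welters' admissibility criterion \cite[(8.14), (9.2), (9.6)]{Welters81} applied to the image of $C$ in $\dual{g_6^2}$: a general nodal plane sextic of geometric genus five is admissible \cite{DiazHarris1988}, and the five pairs $(X, \nu^* g_6^2)$ were arranged to be general. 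To complete your approach you would need a substitute for this step, i.e.\ an argument controlling the fiber at divisors through the nodes; the norm-parity bookkeeping on the smooth locus cannot supply it.
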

\begin{proof}
	One checks (e.g., using \cref{twisted nodal curve sequence}) that the surface in $\tC^{(6)}$ is the image of the surface in $\tX^{(6)}$ determined by $\nu^* g_6^2$, which has two connected components.
	These components are smooth, and their images in $\tC^{(6)}$ are disjoint, provided that the image of $C \to \dual{g_6^2}$ is admissible in the sense of Welters \cite[(8.14), (9.2), (9.6)]{Welters81}. This means that the image of $C$ is nodal and that for any line $l$ in $\dual{g^2_6}$, the divisor cut on $l$ by (the image of) $C$ is either reduced or contains exactly one divisor of the form $2P, 3P$, or $2P+2Q$.
	It is easy to see that a general plane sextic of geometric genus five is admissible \cite[1(c)]{DiazHarris1988}.
	Since the five pairs $(X, \nu^* g_6^2)$ are general, the result follows.
\end{proof}

\begin{lemma}\label{irreducible surfaces}
	Given $u \in \tU$ lying over $0 \in T$, there is a connected Zariski open neighborhood $\tU^\circ \subseteq \tU$ of $u$ such that $\cS_v$ is irreducible for all $v \in \tU^\circ$, and smooth unless $v$ lies over $0$.
	Consequently $\cS_{\tU^\circ}$ is integral.
\end{lemma}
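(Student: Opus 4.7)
The plan is to verify smoothness of $\cS_v$ for $v$ not lying over $0$, then irreducibility of the central fiber $\cS_u$, and finally combine these with flatness to deduce irreducibility of nearby fibers and integrality of $\cS_{\tU^\circ}$.

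For $v$ mapping to $t \neq 0$ in $T$, the curve $\cC_t$ is smooth of genus $6$ and $\cS_v$ is one of the two connected components (singled out by $\tU \to U$) of the fiber product $\bP^2 \times_{\cC_t^{(6)}} \tcC_t^{(6)}$, where $\bP^2$ denotes the net indexed by $v$. Since $\tcC_t \to \cC_t$ is étale, the map $\tcC_t^{(6)} \to \cC_t^{(6)}$ is étale over the locus of divisors $\sum \tilde p_i$ whose images $\pi(\tilde p_i)$ are pairwise distinct. Admissibility of the sextic model (\cref{lemadmissible}), together with the genericity of the pair $(\cC_t, g_6^2)$ inherited from the generality of $T \subset |H|$ and of the five pairs $(X, \nu^* g_6^2)$, implies that the $\bP^2$ lies in the smooth locus of $\cC_t^{(6)}$ and meets the ramification locus of $\tcC_t^{(6)} \to \cC_t^{(6)}$ transversally. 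Hence $\cS_v$ is smooth, being an étale cover of a smooth $\bP^2$ away from a transverse ramification divisor.

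For the central fiber, let $\nu : X \to C = \cC_0$ be the normalization with $\nu^{-1}(\mathrm{node}) = \set{p, q}$. The proof of \cref{five g26} identifies the $g_6^2$ on $C$ indexed by $u$ with a linear system of the form $|\omega_X(-D)|$ on $X$, where $D$ is one of five reduced divisors in $X^{(2)}$ such that $h^0(\omega_X(-D - p - q)) = 2$. The surface $\cS_u \subset \tcC_0^{(6)}$ is then the image of the analogous connected component $\cS_u' \subset \tX^{(6)}$ of the special surface attached to $\nu^* g_6^2$ under the natural morphism $\tX^{(6)} \to \tcC_0^{(6)}$. By the generality of the pair $(X, \nu^* g_6^2)$, the surface $\cS_u'$ is smooth and connected (hence irreducible), and the map $\cS_u' \to \cS_u$ is finite and surjective; therefore $\cS_u$ is irreducible.

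Finally, $\cS \to \tU$ is proper of constant relative dimension $2$ over a smooth curve, hence flat in a neighborhood of $u$. The generic fiber is smooth and irreducible by the first step, and the central fiber is irreducible by the second step. Upper-semicontinuity of the number of irreducible components of fibers of a flat proper morphism produces a connected Zariski neighborhood $\tU^\circ \subseteq \tU$ of $u$ over which every fiber is irreducible. Since $\cS_{\tU^\circ}$ is flat over a smooth irreducible base with irreducible fibers and is generically smooth, it is reduced and irreducible, hence integral.

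\textbf{The main obstacle} is the analysis of $\cS_u$: one must verify that the finite map $\cS_u' \to \cS_u$ does not split $\cS_u$ into components, and that the singularities of $\tcC_0^{(6)}$ along the locus of divisors meeting $\pi^{-1}(\mathrm{node})$ do not introduce extraneous components in $\cS_u$. This is precisely where the explicit description of the five nets on $C$ supplied by \cref{five g26} becomes essential, since it reduces the question to a statement about a general pair $(X, \nu^* g_6^2)$ where the classical theory applies.
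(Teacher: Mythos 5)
Your overall strategy (Welters-type smoothness for the special surfaces, the identification of the central fiber with the image of a special surface in $\tX^{(6)}$ as in \cref{lemadmissible}, and flatness over $\tU$ for integrality) is the same as the paper's, but two steps have genuine gaps. First, you assert that $\cS_v$ is smooth for \emph{every} $v$ not lying over $0$, deducing this from the generality of $T \subset \abs{H}$ and of the five central pairs $(X, \nu^* g_6^2)$. That genericity only controls the generic member of the family: a general pencil may still contain smooth curves $\cC_t$ (or special nets on them) whose plane sextic model fails Welters' admissibility, and nothing in your argument excludes this. The paper accordingly claims smoothness only for \emph{most} $v \in \tU$ and then \emph{defines} $\tU^\circ$ by deleting the bad points; since your $\tU^\circ$ is produced solely from an irreducibility condition, the clause ``smooth unless $v$ lies over $0$'' is not established for it.

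Second, the principle you invoke --- upper semicontinuity of the number of irreducible components of the fibers of a flat proper morphism --- is false: for the family $x^2 = t y^2$ in $\bP^2 \times \bA^1$ the special fiber is an irreducible double line while every nearby fiber is a union of two lines, so the locus of irreducible fibers is not open. The correct statement (openness of the locus of geometrically \emph{integral} fibers) would require knowing that the scheme-theoretic fiber $\cS_u$ is reduced, which you do not check; your argument only controls the underlying set of $\cS_u$ as the image of a smooth surface in $\tX^{(6)}$. In fact no semicontinuity is needed: fibers of $\cS \to \tU$ are connected by the very definition of $\tU$, so smoothness alone makes the fibers away from $0$ irreducible, and the fibers over $0$ are handled by \cref{lemadmissible} as you do. Two further glosses need repair: properness with constant fiber dimension does not by itself give flatness (the paper gets it from $\cS \to \cP$ finite and flat with $\cP \to U$ a $\bP^2$-bundle), and ``generically smooth $\Rightarrow$ reduced'' is not valid as stated, since generic smoothness does not exclude embedded components; the paper instead deduces reducedness of $\cS$ from Serre's criterion, using that $\cS \to \cP$ is finite, flat and generically \'etale (so $\cS$ is Cohen--Macaulay and $R_0$). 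Your version can be salvaged by combining flatness over the smooth curve $\tU^\circ$ with smoothness of the generic fiber, but that argument has to be supplied.
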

\begin{proof}
	By \cite[(8.13)]{Welters81}, if the image of $X \to \dual{g_6^2}$ is admissible (see the proof of \cref{lemadmissible}), then the special surfaces obtained from $g^2_6$ in $\tX^{(6)}$ are smooth. As in the proof of \cref{lemadmissible}, general nodal plane sextics are admissible.
	The same argument applies to a general curve of genus $6$ together with a $g_6^2$, so $\cS_v$ is smooth for most $v \in \tU$ provided that $T$ and $R$ are sufficiently general. We let $\tU^\circ$ be the connected component of $\tU$ containing $u$ minus the points $v$ where $\cS_v$ is singular.

	Since $\cS \to \cP$ is finite, flat and generically \'etale, $\cS$ satisfies Serre's
	conditions $R_0$ and $S_1$ (in fact it is Cohen-Macaulay), so $\cS$ is reduced.
	The previous paragraph then implies that $\cS_{\tU^\circ}$ is integral.
\end{proof}

\subsection{Third base change}
In order to embed the surfaces we constructed into the theta divisors of our family, we need to choose lifts of divisors of the residual $g_4^1$ that we will then add to lifts of the divisors of the $g^2_6$ to obtain lifts of canonical divisors. For this we need to introduce a third base change, defined as follows. Taking the $g_4^1$ residual to each $g_6^2$ determines, in a completely analogous way, a family
$\cD \to \tU$ of 1-dimensional special subvarieties in $\tcC_\tU^{(4)}$. %TODO: why is it exactly tU here?
The morphism $C \to \dual{g_4^1}$ determined by the image $g^2_6 = K_C -g_4^1 \in U$ of $u \in \tU$ is generically unramified, so $\cD_u \to g_4^1$ is also generically unramified, which means $\cD_u$ is generically smooth.
Given a general divisor $D \in \cD_u$, there is a connected curve $\tT \subset \cD_{\tU^\circ}$ containing $D$ such that $\tT \to \tU^\circ$ is \'etale.
Such a curve can be obtained, for instance, by choosing a hyperplane section of $\cD$ (in some projective embedding) which meets the smooth locus of $\cD_u$ transversely, then removing the ramification locus from a component which contains $D$.
%TODO: maybe list assumptions on C_v: lives on some Enriques, has =5 g26's, has irr. S's (to get flatness), mention we can choose tT to include any v

\subsection{The family of surfaces}\label{par-cV}
The natural embedding of $\cS_\tT$ in $\tcC_\tT^{(10)}$ induces a rational map $\cS_\tT \to \T_\tT$, defined on the open subset $\cS^\circ \subset \cS_\tT$ of divisors avoiding the nodes.
Note that $\cS_E^\circ \ne \emptyset$ for all $E \in \tT$, and $\cS_E^\circ = \cS_E$ unless $E$ lies over $0 \in T$.
There is a dense open subset $\tT^\circ \subset \tT$ such that the image of the birational morphism $\cS_t \to \T_t$ is smooth for all $t \in \tT^\circ$.
Since $\cS \to U$ is flat, the image of $\cS_{\tT^\circ} \to \T_{\tT^\circ}$ is flat over $\tT^\circ$.
The (scheme-theoretic) closure $\cV$ of the image of $\cS^\circ$ in $\T_\tT$ agrees with that of $\cS_{\tT^\circ}$, because $\cS^\circ$ is integral.
In particular $\cV \to \tT$ is flat \cite[III, 9.8]{Hartshorne77}.

\section{A nodal fiber}\label{nodal fiber}

In this section we determine the fiber $V \subset \T_0$ of $\cV$ (see \cref{par-cV}) over $D \in \tT$.

\subsection{Notation}\label{DEF def}
Let $P \to A_0$ and $\tB \to \T_0$ be the normalizations.
Recall from \cref{normalized fibers} that $P \to B$ is a $\bP^1$-bundle with distinguished sections $B_0$ and $B_\infty$, and $\tB \to B$ is the blowup along $\Xi \cap \Xi_\beta$ for the extension datum $\beta \in B$.
If $p, q \in \tX$ lie over one of the nodes of $\tC$, we may assume that $\beta = [p, q']$ (the other choice $[p', q]$ corresponds to relabelling $B_0$ and $B_\infty$) \cite[Proposition~1.5]{IzadiWang2018}.

There is a unique divisor in the pencil $\abs{\pi_* D}$ passing through the node of $C$;
the corresponding divisor on the normalization $X$ can be written as $\pi_* E$ where $E = p + q' + u + v$ for some $u, v \in \tX$.
Since $D$ is general it can be identified with a divisor on $\tX$.
Replacing $v$ by $v'$ if necessary, we may assume that $D$ and $E$ belong to the same curve $\tY
\subset \tX^{(4)}$ (among the two curves tetragonally related to $\tX$ via $\abs{\pi_* D}$).
The divisor $F \eq p + q' + u' + v'$ also belongs to $\tY$. Let $d, e, f \in \tY$ correspond to $D, E, F \in \tX^{(4)}$. If $g_Y$ denotes the $g^1_4$ on $Y$, then $h^0 (g_Y - \pi_* (e+f)) >0$.

\begin{lemma}
	The special subvariety $V_{d + e + f}$ is smooth.
\end{lemma}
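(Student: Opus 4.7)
The plan is to identify $V_{d+e+f}$ with a smooth special subvariety via the tetragonal correspondence and Welters's smoothness results for Brill--Noether loci. The hypothesis $h^0(g_Y - \pi_*(e + f)) > 0$ is exactly what is needed to make this identification canonical.

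First I would transport the problem from the $\tX$-side to the $\tY$-side using the tetragonal isomorphism $\varphi : \prym{0}{X} \to \prym{0}{Y}$ discussed in \cref{tetragonal}. The definition of $V_D$ in \cref{lambda spaces} (as the image in $\Pic^{2g}(\tY)$ of $S_i + D$ for $D$ in the residual curve $T_i$ of the $g^2_6$ on $Y$), combined with the subordinate-pencil description used in \cref{manyIntersections}\cref{subordinateCase}, should let me express $V_{d + e + f}$ as a locus built from the smooth special surface $S \subset \tY^{(6)}$ associated to the $g^2_6$ on $Y$ together with the three prescribed points $d, e, f \in \tY$. The hypothesis $h^0(g_Y - \pi_*(e + f)) > 0$ provides the completion of $\pi_*(e + f)$ to a tetragonal divisor on $Y$, which pins down the linear system that governs $V_{d + e + f}$.

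Next I would show that $V_{d+e+f}$ is the isomorphic image of $S$ under an Abel--Jacobi map, by checking that the relevant line bundles on $\tY$ have $h^0$ equal to the expected (minimal) value. The surface $S$ is smooth by the admissibility argument already invoked in the proof of \cref{irreducible surfaces} (a general plane sextic of geometric genus five is admissible in Welters's sense), so smoothness of $V_{d+e+f}$ will follow once the Abel--Jacobi map restricted to $S$ is verified to be unramified and injective. This verification uses Welters's smoothness criterion \cite[Proposition~1.9 and Theorem~1.11]{Welters85}, which in the present setting reduces to a Petri-type cohomology estimate.

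The main obstacle will be controlling the specific, non-generic points $p$, $q'$, $u$, $v \in \tX$ that enter the definition of $E$ and $F$: they come from the nodal structure of $\tC$ and cannot be perturbed. The argument has to use the genericity of the remaining data---the Enriques surface $R$, the Lefschetz pencil $T$, the base points in the preceding constructions and the general divisor $D \in \tT$---to show that no accidental linear equivalence on $\tY$ introduces a tangent obstruction or identifies two points of $S$. Reducing this to a finite check on a short list of potentially bad configurations, and ruling each one out by a direct Brill--Noether estimate, is the technical heart of the argument.
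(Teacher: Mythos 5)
Your proposal misidentifies the object whose smoothness is at stake, and this is not a cosmetic slip. Here $d+e+f\in\tY^{(3)}$ and $V_{d+e+f}$ is a \emph{curve}: by the construction of special subvarieties it is the image in $\Pic^8(\tY)$ of (a translate by $d+e+f$ of) a one-dimensional special subvariety $S_1\subset\tY^{(5)}$ attached to the pencil $g_5^1=\abs{\omega_Y(-\pi_*(d+e+f))}$ on the genus-$5$ curve $Y$; indeed, later in this section $V_{d+e+f}=\Xi\cap\Xi_\beta\cap W$ is a divisor in the surface $W$. Your argument is instead organized around ``the smooth special surface $S\subset\tY^{(6)}$ associated to the $g_6^2$ on $Y$'': there is no $g_6^2$ on $Y$ in play (the nets of degree $6$ live on the genus-$6$ fibres $\cC_t$ and their normalizations, not on the tetragonally related curve $Y$), and identifying $V_{d+e+f}$ with a surface cannot be correct for dimension reasons. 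The tetragonal transport via $\varphi$ from \cref{tetragonal} is also beside the point, since $d,e,f$ already live on $\tY$ and $V_{d+e+f}$ is defined there; likewise the hypothesis $h^0(g_Y-\pi_*(e+f))>0$ recorded before the lemma plays no role in this particular proof.

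Because of this misidentification, the two steps that actually constitute the proof are missing. First, smoothness of $S_1$ comes from Welters' criterion for special subvarieties of symmetric powers, \cite[(8.13)]{Welters81}, which here amounts to the degree-$5$ map $Y\to\dual{g_5^1}$ having only simple ramification; your appeal to \cite[Proposition~1.9 and Theorem~1.11]{Welters85} concerns smoothness of Prym--Brill--Noether loci and neither replaces this nor says anything about the map $S_1\to\Pic^8(\tY)$. Securing simple ramification is exactly where genericity enters, and it is handled not by a case analysis of ``bad configurations'' at the nodes but by the observation that $(Y,\pi_*(e+f))$ is in finite correspondence with $(X,\pi_*(p+q))$, hence general, after which $d\in\tY$ may be chosen general. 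Second, one must show that $S_1$ maps isomorphically onto $V_{d+e+f}$; the argument is elementary: a positive-dimensional fibre would be a pencil contained in $S_1$ lying over the $g_5^1$, which forces a relation $p_1+p_2\sim p_1'+p_2'$ on $\tY$ and hence $\tY$ hyperelliptic, contradicting genericity. Nothing in your proposal supplies this injectivity/immersivity step (a ``Petri-type cohomology estimate'' does not address it), so even granting smoothness of a special subvariety upstairs, smoothness of its image $V_{d+e+f}$ would remain unproven.
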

\begin{proof}
	Set $g_5^1 \eq \abs{\omega_Y(-\pi_*(d + e + f))}$, and suppose for a moment that $Y \to \dual{g_5^1}$ is a well-defined morphism with only simple ramification.
	By Welters' criterion \cite[(8.13)]{Welters81}, the associated special subvarieties $S_1, S_2 \subset \tY^{(5)}$ are smooth.
	We may assume that $V_{d + e + f}$ is the image of $S_1$.
	If some pencil in $\tY^{(5)}$ meets $S_1$, its image in $Y^{(5)}$ must be $g_5^1$, so the pencil must be all of $S_1$.
	This is not possible: if a divisor $p_1 + \ldots + p_5$ belongs to $S_1$, so does $p_1' + p_2' + \ldots + p_5$, hence $p_1 + p_2$ is linearly equivalent to $p_1' + p_2'$ and $\tY$ is hyperelliptic, which contradicts our genericity assumptions.
	Therefore $S_1$ maps isomorphically onto $V_{d + e + f}$.

	It remains to show that $Y \to \dual{g_5^1}$ has simple ramification.
	For this, we just need the pair $(Y, g_5^1)$, or equivalently $(Y, \pi_*(d + e + f))$, to be sufficiently general \cite[XXI, (11.9)]{ACG}.

The data $(X, \pi_* (p+q))$ is in finite correspondence with the data $(Y, \pi_* (e+f))$. So, for a general choice of $(X, \pi_* (p+q))$, $(Y, \pi_* (e+f))$ will also be general. We can then choose $d$ to be a general point on $\tY$.
\end{proof}

\subsection{The central fiber}
We will show that $V$ is birational to
\[
	W \eq \Xi_{[d, e]} \cap \Xi_{[d, f]} = (\Xi \cap \Xi_{[u, v]}) + [d, f] \subset B
\]
(see \cref{tetragonal}). Since $X$ is general, we may assume that $[u, v], [p, q'] \in B$ are too \cite[4.6]{Izadi-A4-1995}.
This implies that $W$ and $\Xi \cap \Xi_\beta$ are smooth by a Bertini-style argument.

\begin{lemma}\label{blowupintersections}
	The projection $\tB \to B$ induces isomorphisms
	\begin{align*}
		\tW &\to W, \\
		\Delta \cap \tW &\to V_{d + e + f}, \\
		\tXi \cap \tW &\to W_d, \\
		\tXi_\beta \cap \tW &\to W_d + \beta,
	\end{align*}
	where $\Delta \subset \tB$ is the exceptional divisor and $\tZ \subset \tB$ denotes the proper transform whenever $Z \subset B$.
\end{lemma}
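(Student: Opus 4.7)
The plan is to reduce all four claims to a local computation at a general point $b_0 \in V_{d+e+f}$, combining the intersection calculus of \cref{manyIntersections} with a routine blowup analysis. Applying \cref{manyIntersections}\cref{tripleIntersection} to $d, e, f \in \tY$ gives
\[
	W \cap \Xi = W_d \cup V_{d+e+f}.
\]
For the corresponding decomposition of $W \cap \Xi_\beta$, unpacking the tetragonal correspondence in \cref{tetragonal} shows $\varphi([p, q']_\tX) = [e, f]_\tY$ (the key point being that the four elements of $\tY$ whose associated divisors on $\tX$ contain $p$ are $e, f$ and the two ``flipped'' divisors containing $q$ rather than $q'$, and symmetrically for $q$), so $\beta = [e, f]$ and $\Xi_\beta = \Xi_{[e, f]}$ in $B$; \cref{manyIntersections}\cref{shiftedIntersection} then gives $W \cap \Xi_\beta = (W_d + \beta) \cup V_{d+e+f}$.

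At $b_0$, choose local analytic coordinates $(x, y, z, w)$ on $B$ with $\Xi = V(x)$ and $\Xi_\beta = V(y)$, so the blowup center is $Z = V(x, y)$. The smoothness of $W$ at $b_0$ (noted just before the lemma) lets me write $W$ locally as the graph $\{x = a(z, w),\ y = b(z, w)\}$. The decompositions above force factorizations $a = g\, a_1$ and $b = g\, b_1$ with $V(g) = V_{d+e+f}$, $V(a_1) = W_d$, and $V(b_1) = W_d + \beta$ as germs of divisors in $W$.

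The crux is the disjointness of $W_d$ and $W_d + \beta$ near $b_0$, equivalently $(a_1, b_1) = \cO_{W, b_0}$. A common point of these curves would produce $L, M \in \tY_\lambda$ with $L - M = \beta$, placing $\beta$ inside the proper subvariety $\tY_\lambda - \tY_\lambda \subsetneq B$; this is excluded by the genericity of $\alpha$, hence of $\beta$. With $(a_1, b_1) = (1)$ in hand, the pulled-back ideal $(x|_W, y|_W) = (g)(a_1, b_1) = (g)$ is principal, so $W \cap Z$ is the Cartier divisor $V_{d+e+f}$ in the smooth surface $W$. Blowing up a smooth variety along a Cartier divisor is trivial, so the universal property of blowups identifies $\tW$ with $W$ via $\tB \to B$.

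The remaining three isomorphisms follow by a direct computation in the two standard charts $U_1\colon x = x_1 y_1,\ y = y_1$ and $U_2\colon x = x_2,\ y = x_2 y_2$ of $\tB$. On $\tW \cong W$ the fiber coordinates of the $\bP^1$-bundle $\Delta$ are $x_1 = a_1/b_1$ in $U_1$ and $y_2 = b_1/a_1$ in $U_2$, both regular because $(a_1, b_1) = (1)$. Intersecting $\tW$ with $\Delta$ picks out $V(g) = V_{d+e+f}$; with $\tXi = V(x_1) \subset U_1$ it picks out $V(a_1) = W_d$; and with $\tXi_\beta = V(y_2) \subset U_2$ it picks out $V(b_1) = W_d + \beta$. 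The main obstacle is the genericity check for $(a_1, b_1) = (1)$; everything else is a routine blowup calculation.
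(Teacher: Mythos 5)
Your proposal follows the same architecture as the paper's proof: decompose $\Xi \cap W$ and $\Xi_\beta \cap W$ via \cref{manyIntersections} (with $\beta = [e,f]$ from \cref{tetragonal}), show $W_d$ and $W_d + \beta$ are disjoint, conclude that the trace of the blowup center on $W$ is a Cartier divisor so that $\tW \to W$ is an isomorphism, and then read off the other three intersections in charts. The genuine gap is exactly at the step you yourself call the crux, the disjointness of $W_d$ and $W_d + \beta$, which you dispatch by saying a common point would force $\beta \in \tY_\lambda - \tY_\lambda$, ``excluded by the genericity of $\alpha$, hence of $\beta$.'' There is no $\alpha$ in this situation, and more importantly $\beta$ is not independent of $Y$: by construction $\beta = [e,f]$ with $e, f \in \tY$, so $\beta$ automatically lies on the proper subvariety $\Sigma(Y) = \set{[r,s] \mid r, s \in \tY} \subset B$ attached to $Y$. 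Hence one cannot treat $\beta$ as avoiding every proper closed subset built from $Y$; to exclude $\beta \in \tY_\lambda - \tY_\lambda$ you would need an actual incidence argument — e.g.\ that the locus of points of $B$ lying in $\Sigma(Y) \cap (\tY_\lambda - \tY_\lambda)$ for some Prym-curve $Y$ of $B$ is proper (in particular that $\Sigma(Y)$ and $\tY_\lambda - \tY_\lambda$ share no component, as the fiber of the Prym map over $B$ is two-dimensional), together with the fact that the special pair $(e,f)$, whose divisors both contain $p + q'$, is still general enough. None of this is addressed. The paper sidesteps it entirely: $W_d$ and $W_d + \beta$ are the two embeddings of $\tY_\lambda$ in $W$, they are algebraically equivalent (\cref{betterAlgEquivalence}), their self-intersection class is $0$ by the adjunction computation of \cref{intersection numbers redux}, and two distinct irreducible curves of intersection number zero on the smooth surface $W$ are disjoint. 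You should replace your genericity claim by this intersection-theoretic argument (or supply the missing incidence/dimension count).

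Two smaller points. Your factorizations $a = g\,a_1$, $b = g\,b_1$ tacitly assume that $\Xi|_W$ and $\Xi_\beta|_W$ contain $V_{d+e+f}$ with the same multiplicity (indeed multiplicity one); the decompositions coming from \cref{manyIntersections} are statements about supports, and if the multiplicities of $g$ in $a$ and $b$ differed, the ideal $(a,b)$ would fail to be principal at points of $V_{d+e+f} \cap W_d$ or $V_{d+e+f} \cap (W_d + \beta)$, while multiplicity greater than one would make $\Delta \cap \tW$ non-reduced; a word on reducedness/transversality (as the paper supplies for the third and fourth isomorphisms) is needed. Finally, a computation ``at a general point $b_0 \in V_{d+e+f}$'' does not by itself give scheme-theoretic isomorphisms; the local argument must be run at every point of $V_{d+e+f}$ — as you implicitly note, the graph presentation and the coprimality $(a_1,b_1) = (1)$ do persist at all points once disjointness and multiplicity one are established, so this last item is only a matter of phrasing.
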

\begin{proof}
	Note that $W_d$ and $W_d + \beta$ are the embeddings of $\tY_\lambda$ in $W$, as $\beta = [e, f]$ (see \cref{tetragonal}). By \cref{intersection numbers redux}, $W_d \cap (W_d + \beta) = \emptyset$. It follows by \cref{manyIntersections}\cref{tripleIntersection,shiftedIntersection} that
	\[
		\Xi \cap \Xi_\beta \cap W = \Xi \cap \Xi_{[e, f]} \cap \Xi_{[d, e]} \cap \Xi_{[d, f]} = \left(  V_{d + e + f} \cup W_d \right) \cap \left(  V_{d + e + f} \cup W_d +\beta \right) = V_{d + e + f}.
	\]
Next note that
\[
W = \left( W \setminus W_d \right) \cup \left( W \setminus W_d + \beta \right)
\]
and
\[
V_{d + e + f} = \left( V_{d + e + f} \setminus W_d \right) \cup \left( V_{d + e + f} \setminus W_d + \beta \right).
\]
Now $V_{d + e + f} \setminus W_d$ is a Cartier divisor in $W \setminus W_d$, and $V_{d + e + f} \setminus W_d+\beta$ is a Cartier divisor in $W \setminus W_d +\beta$.
	This implies that $\Xi \cap \Xi_\beta \cap W= V_{d + e + f}$ is a Cartier divisor in $W$, giving the first two isomorphisms.

	Moreover $(\tXi \cap \tW) \minus \Delta \to (\Xi \cap W) \minus (\Xi \cap \Xi_\beta) = W_d \minus V_{d + e + f}$ is an isomorphism.
	Since $V_{d + e + f}$ is smooth, $\Xi$ intersects $W$ transversely along $V_{d + e + f} \minus W_d$, which means $\tXi \cap \tW = \tW_d \stackrel{\cong}{\ra} W_d$ as sets and also generically as schemes.
	Next, the isomorphism holds scheme-theoretically everywhere because $\tXi \cap \tW$ is a generically smooth Cartier divisor in the smooth variety $\tW$, so it is reduced.
	The fourth isomorphism is similar.  
\end{proof}

\begin{lemma}\label{imSo}
	$V$ contains the image $\oW \subset \T_0$ of $\tW \subset \tB$.
\end{lemma}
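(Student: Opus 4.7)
The strategy is to produce, for a Zariski-dense subset of points of $\oW$, an explicit divisor in $\cS_0^\circ$ whose Abel--Jacobi image in $\T_0$ is that point, and then conclude by irreducibility and closedness. Note that $V$ is closed in $\T_0$ of pure dimension $2$, as a scheme-theoretic fiber of the flat morphism $\cV \to \tT$ (see \cref{par-cV}), while $\oW$ is irreducible as the image of the irreducible surface $\tW$, which by \cref{blowupintersections} is isomorphic to the smooth intersection $W = \Xi_{[d,e]} \cap \Xi_{[d,f]}$. Hence it suffices to check the containment on a dense open subset of $\oW$.

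I would begin by choosing a general point $\tilde w \in \tW$, lying off the exceptional divisor $\Delta$ of $\tB \to B$ and off $\tXi \cup \tXi_\beta$; by \cref{blowupintersections} its projection is a genuine point $w \in W \setminus (W_d \cup (W_d + \beta)) \subset B$, and its image $\ol w \in \oW$ lies in the smooth locus of $\T_0$, identified with the open part of $P$ away from the glued sections $B_0, B_\infty$. After choosing a theta characteristic to translate $w$ into $\prym{10}{X}$, the conditions $w \in \Xi_{[d,e]}$ and $w \in \Xi_{[d,f]}$ become statements about effective divisors on $\tX$ via the Abel--Prym formalism of \cref{tetragonal}: there exist effective divisors in the classes $w - [d,e]$ and $w - [d,f]$. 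Using the tetragonal triple $(X, Y, Z)$ underlying the $g_6^2$ and the relation $h^0(g_Y - \pi_*(e+f)) > 0$ recorded in \cref{DEF def}, I would combine these two divisors into a single effective divisor $\Gamma$ of degree $10$ on $\tX$ of the form $\Gamma = \Gamma_0 + D$, where $\Gamma_0$ is a lift of a divisor in the $g_6^2$ on $X$. This $\Gamma$ then descends to a divisor in $\tcC_0^{(10)}$ of the shape parametrized by $\cS_0 + D$.

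It remains to verify that for generic $\tilde w$ the divisor $\Gamma_0$ lies in the correct connected component of $\cS_0$ (the one selected by \cref{lemadmissible}), and that it avoids the nodes so $\Gamma_0 \in \cS_0^\circ$. Node-avoidance is automatic: any divisor meeting a node maps into $B_0 \cup B_\infty$ under the compactification $P \to A_0$, which is disjoint from the smooth locus of $\T_0$ where $\ol w$ sits; the component choice is locally constant on $\tW$, so stays in one component after we restrict to a neighborhood. Since the Abel--Jacobi image of $\Gamma_0 + D$ is $\ol w$ by construction, this shows $\ol w$ lies in the image of $\cS_0^\circ \to \T_0$, which is an open subset of $V$, and density plus irreducibility give $\oW \subseteq V$.

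The hardest step is the middle one: reconstructing the explicit effective divisor $\Gamma = \Gamma_0 + D$ from the abstract Prym-theoretic conditions $w \in \Xi_{[d,e]} \cap \Xi_{[d,f]}$, and recognizing that $\Gamma_0$ really is a lift of a divisor in the prescribed $g_6^2$. The tetragonal dictionary of \cref{tetragonal} is the correct tool, but because $\tC$ is nodal one must carefully track the gluing data (i.e., the extension datum $\beta = [p, q']$) when passing between line bundles on $\tC$ and on its normalization $\tX$.
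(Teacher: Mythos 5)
Your overall skeleton is the same as the paper's: exhibit, for a general point of $\oW$, an explicit divisor in $\cS^\circ$ mapping to it, then conclude by irreducibility and closedness. But the step that actually carries the content of the lemma is missing, and you flag it yourself as unresolved. The paper does not ``combine'' two effective divisors coming from the conditions $w \in \Xi_{[d,e]}$ and $w \in \Xi_{[d,f]}$. Instead it translates: $W = \Xi_{[d,e]} \cap \Xi_{[d,f]} = (\Xi \cap \Xi_{[u,v]}) + [d,f]$, and by \cref{manyIntersections}\cref{doubleIntersection} the intersection $\Xi \cap \Xi_{[u,v]}$ \emph{is} the special subvariety $V_{u+v}$ attached to $\abs{\omega_X(-\pi_*(u+v))} = \nu^* g_6^2$. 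By the very definition of special subvarieties, together with the tetragonal dictionary of \cref{tetragonal}, a general $L \in W \minus V_{d+e+f}$ is therefore literally of the form $\cO_\tX(G + u + v + D - F') = \cO_\tX(G + D - p' - q)$ with $\pi_* G \in \nu^* g_6^2$ and $G$ supported away from the nodes. That single identification is what produces a divisor of the required shape $\Gamma_0 + D$ with $\Gamma_0$ a lift of a member of the net. The mere existence of effective divisors in the classes $w - [d,e]$ and $w - [d,f]$ (these are points of translates of the theta divisor, represented by divisors on $\tY$ of the wrong degree) gives no mechanism for extracting such a decomposition, so as written the central step of your argument does not go through; it is precisely the Beauville--Debarre identification that you would need to invoke.

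Two smaller points. First, the descent from $B$ to $\T_0$ needs more than ``the component choice is locally constant'': over a given $L \in W$ there are several natural line bundles on $\tC$ (corresponding to $G + D - p' - q$, $G + D - p - q'$, $G + D - p - q$, $G + D - p' - q'$), and one must check that $\cO_\tC(G + D)$ is one that actually lies on $\T_0$; the paper disposes of the wrong choices by a parity argument, which your sketch does not address. Second, node-avoidance is a statement about the representing divisor $G$, not about $\ol w$ lying in the smooth locus of $\T_0$; it holds because $G$ moves in the two-dimensional net while passing through a node preimage is a nontrivial condition, so a general point of $W$ admits such a representative. These are repairable, but the gap in the previous paragraph is the essential one.
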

\begin{proof}
	First note that $W = (\Xi \cap \Xi_{[u, v]}) + [d, f] = V_{u + v} + \cO_\tX(D - F')$ by \cref{manyIntersections}\cref{doubleIntersection} and \cref{tetragonal}.
	Thus, a general point $\oL \in \oW$ corresponds to exactly one $L \in W \minus V_{d + e + f}$, which can be represented by
	\[
		G + u + v + D - F' = G + D - p' - q
	\]
	for some divisor $G \in \tX^{(6)}$ supported away from the nodes with $\pi_* G \in \nu^* g_6^2$.
	By abuse of notation we can think of $G$ as a divisor on $\tC$, in which case $\pi_* G \in g_6^2$ and $G + D \in \cS^\circ$.

	Let $\T_0^\circ \iso \tB \minus (\tXi \cup \tXi_\beta)$ be the smooth locus of $\T_0$.
	The map $\T_0^\circ \into \tB \to B$ is induced by $\nu^* : \Pic^0(\tC) \to \Pic^0(\tX)$.
	By choosing appropriate theta characteristics on $\tC$ and $\tX$, it can be taken to send $\cO_\tC(G + D) \in V$ to $L \in W$ (we could have chosen $G + D - p - q'$ instead; however $G + D - p - q$ and $G + D - p' - q'$ have the wrong parity).
	It follows that $\cO_\tC(G + D) = \oL$.
	This shows that $V$ contains an open subset of $\oW$, and hence all of $\oW$.
\end{proof}

\begin{lemma}
	The Hilbert polynomial of $V$ is $\chi(\cO_V(n \T_0)) = 20 n^2 - 40 n + 22$.
\end{lemma}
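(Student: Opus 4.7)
The plan is to identify $V$ with $\oW$ and to compute the Hilbert polynomial of $\oW$ via its normalization. By flatness of $\cV \to \tT$ (\cref{par-cV}) the Hilbert polynomial is constant in $t$, and on a smooth fiber its leading coefficient equals $\tfrac{1}{2}[V_t] \cdot [\T_t]^2 = \tfrac{1}{6}[\T]^5 = 20$ (since $[V_t] = \tfrac{1}{3}[\T]^3$). Since $V \supseteq \oW$ by \cref{imSo}, both are irreducible of dimension two, and the computation on $\oW$ below recovers the same leading coefficient, this forces $V = \oW$ scheme-theoretically; hence it suffices to compute $\chi(\cO_{\oW}(n\T_0))$. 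The normalization $\pi \colon \tW \to \oW$ glues the two disjoint embeddings $W_d$ and $W_d + \beta$ of $\tY_\lambda$ (disjoint by \cref{intersection numbers redux} with $X = Y$) into a single copy, giving the conductor sequence
\[
0 \to \cO_{\oW} \to \pi_* \cO_{\tW} \to \cO_{\tY_\lambda} \to 0.
\]

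Twisting this by $\cO_{\oW}(n\T_0)$ and using $\tW \iso W$ from \cref{blowupintersections} reduces the problem to Riemann--Roch on $W$ and on $\tY_\lambda$. To identify the divisor class $D$ on $W$ pulled back from $\cO_{A_0}(\T_0)$, I would combine adjunction ($\omega_{\T_0} \iso \cO_{A_0}(\T_0)|_{\T_0}$, using that $\omega_{A_0}$ is trivial), the conductor formula for the nodal gluing of $\tXi$ and $\tXi_\beta$ ($\pi^*\omega_{\T_0} \iso \omega_{\tB}(\tXi + \tXi_\beta)$), and the blow-up formula $\omega_{\tB} \iso \cO_{\tB}(\Delta)$. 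Restricting via \cref{blowupintersections} and using \cref{betterAlgEquivalence} to equate $[W_d + \beta]$ with $[W_d]$ numerically, this yields $D \equiv 2[W_d] + [V_{d+e+f}]$ on $W$.

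The core step is computing the intersection numbers on $W$. From \cref{blowupintersections} one has $[\Xi|_W] = [W_d] + [V_{d+e+f}]$, so $[\Xi|_W]^2 = [\Xi]^4 = 24$ together with $[W_d]^2 = 0$ (\cref{intersection numbers redux} with $X = Y$) yields the single relation $2 [W_d] \cdot [V_{d+e+f}] + [V_{d+e+f}]^2 = 24$. To split this, I would use that $[\tY_\lambda] - \tfrac{1}{3}[\Xi|_{\Xi \cap \Xi_\beta}] \in \bW^+$ and is therefore orthogonal to $[\Xi|_{\Xi \cap \Xi_\beta}]$ (since $\bW^+$ is primal); this gives $[W_d] \cdot [\Xi|_W] = \tfrac{1}{3} [\Xi|_W]^2 = 8$, hence $[W_d] \cdot [V_{d+e+f}] = 8$ and $[V_{d+e+f}]^2 = 8$. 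This appeal to primal orthogonality is the main subtlety of the argument. With $\chi(\cO_W) = 14$ from the Koszul resolution of $\cO_W$ in $B$ and $K_W \equiv 2\Xi|_W$ from adjunction, Riemann--Roch on $W$ gives $\chi(W, nD) = 14 + 20n^2 - 32n$, while Riemann--Roch on the smooth curve $\tY_\lambda$ of genus $9$ (with $\deg D|_{W_d} = 8$) gives $\chi(\tY_\lambda, nD|_{W_d}) = 8n - 8$. Subtracting gives $\chi(\cO_{\oW}(n\T_0)) = 20n^2 - 40n + 22$.
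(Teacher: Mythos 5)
There is a genuine gap at the very first step: you reduce the lemma to a computation on $\oW$ by asserting that $V = \oW$ scheme-theoretically, and your justification is that $\oW \subseteq V$ (\cref{imSo}) and that the leading coefficients of the two Hilbert polynomials agree. Matching leading coefficients only says that $V$ and $\oW$ agree as $2$-cycles (no extra surface components, multiplicity one along $\oW$); it does not rule out embedded or lower-dimensional components of the flat limit $V$, and it does not even give irreducibility or reducedness of $V$, which you assume when you write ``both are irreducible of dimension two.'' Flat limits of smooth irreducible varieties routinely acquire embedded components (the standard example: twisted cubics degenerating to a nodal plane cubic with an embedded point at the node, where the limit has Hilbert polynomial $3n+1$ while its reduced support has $3n$ — same leading coefficient, different constant term). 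Since embedded components change exactly the lower-order terms of $\chi(\cO_V(n\T_0))$, your argument establishes the Hilbert polynomial of $\oW$ but not of $V$. Note also that your route is circular with respect to the paper's logic: the paper proves $V=\oW$ (\cref{identifyfiber}) precisely by comparing the full Hilbert polynomial of $\oW$ with the one asserted in this lemma, using that a closed subscheme with the same Hilbert polynomial must be the whole scheme; so the identification $V=\oW$ cannot be an input to the lemma without an independent argument (e.g.\ that $V$ is reduced without embedded points, which you do not supply).

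The paper instead proves the lemma entirely on a nearby smooth fiber: by flatness of $\cV \to \tT$ (\cref{par-cV}) it computes $\chi(\cO_{\cV_t}(n\T_t))$ for general $t$, where $\cV_t = V_G$ is a smooth special subvariety, via Grothendieck--Riemann--Roch for $\pi^{(6)}_*$ on symmetric powers $\tZ^{(6)} \to Z^{(6)}$ and restriction to the $g_6^2$, obtaining the full polynomial $20n^2-40n+22$, not just its leading term. Your computation of $\chi(\cO_{\oW}(n\T_0)) = 20n^2-40n+22$ is essentially correct and in fact reproduces the paper's later calculation in \cref{identifyfiber} (your derivation of the polarization class $2[W_d]+[V_{d+e+f}]$ via adjunction and the conductor, and of $[W_d]\cdot[\Xi|_W]=8$ via orthogonality of primal classes to restricted classes, are legitimate variants of the paper's use of the pencil on $P$ and of \cref{blowupintersections}, \cref{intersection numbers redux}). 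But as a proof of the stated lemma about $V$ it is incomplete until the scheme-theoretic equality $V=\oW$ is established by some means not relying on the lemma itself.
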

\begin{proof}
	Since $\cV \to \tT$ is flat, it suffices to compute the Hilbert polynomial of $\cV_t$ for any $t \in \tT^\circ$.
	For simplicity set $Z \eq \cC_t$ and let $G \in \tZ^{(4)}$ correspond to $t,$ so that $\cV_t = V_G$.
	The special surface $S \subset \tZ^{(6)}$ associated to $g_4^1 \eq \abs{\pi_* G}$ has two components; let $S_1$ be the one whose image is $V_G$.
	Since $S_1 \to V_G$ is a birational morphism between smooth varieties, their Hilbert polynomials are the same.
	We will show that $\chi(\cO_S(n \T_t)) = 40 n^2 - 80 n + 44$, which gives the result (after some algebra) because the calculation also works on each of the two curves tetragonally related to $Z$ via $g_4^1$.
	Since $\chi(\cO_S(2 n \T_t)) = \chi(\cO_S(n \Tt))$ for all $n \in \bN$, where $\Tt$ is a theta divisor for $\Pic^6(\tZ)$, it suffices to show that $\chi(\cO_S(n \Tt)) = 160 n^2 - 160 n + 44$.

	Let $\iota : g_6^2 \into Z^{(6)}$ be the embedding of the $g_6^2$ residual to $g_4^1$, with $\rho : S \to g_6^2$ and $\tiota : S \into \tZ^{(6)}$ the associated projections.
	Since $\pi^{(6)} : \tZ^{(6)} \to Z^{(6)}$ is an affine morphism, $\pi_*^{(6)}$ commutes with arbitrary base change.
	In particular $\iota^* \pi_*^{(6)} H = \rho_* \tiota^* H$ for $H \eq \cO_{\tZ^{(6)}}(n \Tt)$.
	Since $R^i \rho_* = 0$ for $i > 0$
	\begin{equation}\label{hilbert polynomial integral}
		\chi(\cO_S(n \Tt)) = \chi(\iota^* \pi_*^{(6)} H) = \int_{\abs{L}} \ch(\iota^* \pi_*^{(6)} H) \todd(\abs{L}) = \int_{\abs{L}} \iota^* \ch(\pi_*^{(6)} H) \todd(\abs{L}).
	\end{equation}
	By Grothendieck-Riemann-Roch
	\begin{equation}\label{grr on symmetric powers}
		\ch(\pi_*^{(6)} H) = \pi_*^{(6)}(\ch(H) \todd(\tZ^{(6)})) \cdot \todd(Z^{(6)})^{-1}.
	\end{equation}
	The Chern classes of symmetric products are well-known \cite[VII, (5.4)]{ACGH}.
	In particular
	\[
		c(\tZ^{(6)}) = 1 - 4\teta - \ttheta + 10\teta^{2} + 5\teta \ttheta + \frac{1}{2}\ttheta^{2} + \cdots,
	\]
	(see \cref{symmetric power etale pushforward} for notation) and hence
	\[
		\todd(\tZ^{(6)}) = 1 - 2\teta - \frac{1}{2}\ttheta + \frac{13}{6}\teta^{2} + \frac{13}{12}\teta \ttheta + \frac{1}{8}\ttheta^{2} + \cdots.
	\]
	Since
	\[
		\ch(H) = 1 + n\ttheta + \frac{1}{2} n^{2}\ttheta^{2} + \cdots,
	\]
	it follows that
	\[
		\ch(H) \todd(\tZ^{(6)}) = 1 - 2\teta + \left(n - \frac{1}{2}\right)\ttheta + \frac{13}{6}\teta^{2} + \left(-2 n + \frac{13}{12}\right)\teta \ttheta + \left(\frac{1}{2} n^{2} - \frac{1}{2} n + \frac{1}{8}\right)\ttheta^{2} + \cdots,
	\]
	so by \cref{symmetric power etale pushforward}
	\begin{align*}
		\pi_*^{(6)}(\ch(H) \todd(\tZ^{(6)}))
		&= 64 + \left(160 n - 144\right)\eta + \left(32 n - 16\right)\theta + \left(160 n^{2} - 320 n + \frac{484}{3}\right)\eta^{2}\\
		&+\left(80 n^{2} - 112 n + \frac{112}{3}\right)\eta \theta + \left(8 n^{2} - 8 n + 2\right)\theta^{2} + \cdots.
	\end{align*}
	Again a general formula gives
	\[
		c(Z^{(6)}) = 1 + \eta - \theta + \frac{1}{2}\theta^{2} + \cdots,
	\]
	and hence
	\[
		\todd(Z^{(6)})^{-1} = 1 - \frac{1}{2}\eta + \frac{1}{2}\theta + \frac{1}{6}\eta^{2} - \frac{1}{3}\eta \theta + \frac{1}{8}\theta^{2} + \cdots.
	\]
	Using \cref{grr on symmetric powers}
	\begin{align*}
		\ch(\pi_*^{(6)} H)
		&= 64 + \left(160 n - 176\right)\eta + \left(32 n + 16\right)\theta + \left(160 n^{2} - 400 n + 244\right)\eta^{2} \\
		&+\left(80 n^{2} - 48 n - 48\right)\eta \theta + \left(8 n^{2} + 8 n + 2\right)\theta^{2} + \cdots.
	\end{align*}
	The class of a linear system in $Z^{(d)}$ can be computed using a special case of the secant plane formula \cite[VIII, (3.2)]{ACGH}.
	After simplifying (using \cite[(6.3)]{Macdonald62}), the class of $\abs{L}$ in $Z^{(6)}$ is
	\[
		10\eta^{4} - 4\eta^{3} \theta + \frac{1}{2}\eta^{2} \theta^{2}.
	\]
	It follows that the degree of $\iota^* \ch(\pi_*^{(6)} H)$ is $160 n^{2} - 400 n + 244$.
	The class of a pencil in $\abs{L}$ is
	\[
		-5\eta^{5} + \eta^{4} \theta,
	\]
	so the intersection of $\iota^* \ch(\pi_*^{(6)} H)$ with a line has degree $160 n - 176$.
	The class of a point is obviously $\eta^{6}$, so the codimension $0$ term of $\iota^* \ch(\pi_*^{(6)} H)$ has degree $64$.
	Therefore
	\[
		\iota^* \ch(\pi_*^{(6)} H) = 64 + \left(160 n - 176\right)h + \left(160 n^{2} - 400 n + 244\right)h^{2},
	\]
	where $h \eq c_1(\cO_{\abs{L}}(1))$.
	Finally \cref{hilbert polynomial integral} gives
	\begin{align*}
		\chi(\cO_S(n \Tt))
		&= \int_{\abs{L}} \left(64 + \left(160 n - 176\right)h + \left(160 n^{2} - 400 n + 244\right)h^{2}\right) \left(1 + \frac{3}{2}h + h^{2}\right) \\
		&= \int_{\abs{L}} 64 + \left(160 n - 80\right)h + \left(160 n^{2} - 160 n + 44\right)h^{2} \\
		&= 160 n^{2} - 160 n + 44,
	\end{align*}
	as required.
\end{proof}

\begin{proposition}\label{identifyfiber}
	$V = \oW$.
\end{proposition}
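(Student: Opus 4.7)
The plan is to exploit that $\oW \subseteq V$ (\cref{imSo}) together with the previously computed Hilbert polynomial of $V$. Since $H \eq \cO_{A_0}(\T_0)|_{\T_0}$ is ample and $\cV \to \tT$ is flat, the Hilbert polynomial $\chi(V, nH|_V) = 20n^2 - 40n + 22$ is determined by any smooth nearby fiber. If we can show that $\chi(\oW, nH|_\oW)$ equals the same polynomial, then the ideal sheaf $\cI_{\oW / V}$ has vanishing Hilbert polynomial with respect to the ample $H|_V$, which forces $\cI_{\oW / V} = 0$ and hence $\oW = V$ as closed subschemes of $\T_0$.

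To compute $\chi(\oW, nH|_\oW)$, I would use the normalization map $\nu : \tW \to \oW$. By \cref{blowupintersections}, $\nu$ identifies the two smooth curves $\tW \cap \tXi \iso W_d$ and $\tW \cap \tXi_\beta \iso W_d + \beta$ via the gluing $x \leftrightarrow x + \beta$, and these curves are disjoint in $\tW$ because $W_d$ has self-intersection $0$ on $W$ by \cref{intersection numbers redux}. The resulting short exact sequence of sheaves on $\oW$,
\[
0 \to \cO_\oW \to \nu_* \cO_\tW \to \cO_{W_d} \to 0,
\]
combined with the projection formula, gives
\[
\chi(\oW, nH|_\oW) = \chi(\tW, n\nu^* H) - \chi(W_d, n\nu^* H|_{W_d}).
\]

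The remaining input is a precise identification of $\nu^* H$ on $\tW \iso W \subset B$. The composite $\tW \into \tB \to \T_0 \into A_0$ factors through the $\bP^1$-bundle normalization $P \to A_0$ described in \cref{normalized fibers}, and $\cO_{A_0}(\T_0)$ pulls back to the line bundle of the compactified semiabelian theta divisor on $P$, whose restriction to $B_0$ is $\cO_B(\Xi)$ and to $B_\infty$ is $\cO_B(\Xi_\beta)$. Tracking this through the blowup $\tB \to B$ and the identification of $\tXi$ with $\tXi_\beta$ then expresses $\nu^* H|_W$ in terms of $\Xi|_W$ and $\Xi_\beta|_W$. Riemann--Roch on the smooth surface $W \subset B$, using the class $[W] = [\Xi]^2 \in H^4(B, \bZ)$ and $[\Xi]^4 = 24$, and on the smooth curve $W_d$, using its Prym-curve description and the intersection numbers from \cref{secprymcurves} (in particular $\pair{X, X} = 0$, which also pins down the genus of $W_d$), yields both $\chi$'s explicitly and, if the identification of $\nu^* H$ is correct, the target polynomial $20 n^2 - 40 n + 22$.

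The main obstacle is precisely this identification of $\nu^* H$, which requires keeping careful track of the normalization $\tB \to \T_0$, the normalization $P \to A_0$, and the compatibility of the gluing identification $\tXi \leftrightarrow \tXi_\beta$ with the semiabelian theta divisor on $P$; a mismatch here would immediately change the leading coefficient of the Hilbert polynomial. Once this is settled, the computation reduces to intersection numbers on the abelian fourfold $B$ and its subvarieties, which are consequences of the Prym-theoretic framework already established.
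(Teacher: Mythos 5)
Your overall strategy coincides with the paper's: start from $\oW \subseteq V$ (\cref{imSo}), compare Hilbert polynomials with respect to the restriction of $\T_0$ (your ``inclusion plus equal Hilbert polynomial forces equality'' reduction is correct, and in fact spelled out a bit more carefully than in the paper), compute $\chi$ of $\oW$ through the normalization exact sequence $0 \to \cO_\oW \to \nu_* \cO_W \to \cO_{W_d} \to 0$ using the disjointness of $W_d$ and $W_d + \beta$, and finish by Riemann--Roch on the smooth surface $W \subset B$ and on the curve $W_d$.

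The gap is exactly the one you flag yourself and do not close: the identification of $\nu^* H$ on $W$. This is not routine bookkeeping; it is the essential geometric input, and every coefficient of the Hilbert polynomial depends on it. The paper resolves it by quoting \cite[1.2]{IzadiWang2018}: $\tB$ moves on $P$ in the pencil spanned by $B_0 + \varphi^{-1}(\Xi_\beta)$ and $B_\infty + \varphi^{-1}(\Xi)$, so the polarization restricts on $\tB$ to $\tXi + \tXi_\beta + \Delta$, which via \cref{blowupintersections} becomes the divisor $\Phi = W_d + (W_d + \beta) + V_{d + e + f}$ on $W$, of class $\xi + \omega$ with $\omega = [W_d]$. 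Your proposed route, describing the pullback of $\cO_{A_0}(\T_0)$ to $P$ only through its restrictions to the sections $B_0$ and $B_\infty$, cannot determine the restriction to $\tW$ (which lies in neither section), and it omits the exceptional-divisor term $\Delta$, i.e.\ the $V_{d + e + f}$ component of $\Phi$. Any mismatch here changes the leading coefficient: class $\xi$ would give $12 n^2$, class $2\xi$ would give $48 n^2$, whereas the correct class gives $\frac{1}{2}(\xi + \omega)^2 = 20\pt$, using $\xi^2 = 24\pt$, $\xi\omega = 8\pt$ (the $\Xi$-degree of $W_d$) and $\omega^2 = 0$ from \cref{intersection numbers redux}. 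Once $[\Phi] = \xi + \omega$ is established, the rest goes as you outline: $\chi(\cO_W(n\Phi)) = 20 n^2 - 32 n + 14$, and subtracting $\chi(\cO_{W_d}(n\Phi)) = 8 n - 8$ (degree $8 n$ on the genus-$9$ curve $W_d$) yields $20 n^2 - 40 n + 22$, as needed. As written, however, the proposal is a plan conditional on the unproved identification of $\nu^* H$, so it does not yet constitute a proof.
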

\begin{proof}
	Since $\oW \subseteq V$, it suffices to show that the Hilbert polynomial of $\oW$ is also $20 n^2 - 40 n + 22$.
	The projection $\tB \to B$ factors through $P \tou\varphi B$.
	Moreover $\tB$ moves in a pencil of divisors on $P$ spanned by $B_0 + \varphi^{-1}(\Xi_\beta)$ and $B_\infty + \varphi^{-1}(\Xi)$ \cite[1.2]{IzadiWang2018}.
	Therefore $\tB$ is polarized by
	\[
		\res{(B_0 + \varphi^{-1}(\Xi_\beta))}{\tB} = \tXi + \tXi_\beta + \Delta.
	\]
	This restricts to a divisor on $\tW$, which, by \cref{blowupintersections}, can be identified with the divisor
	\begin{equation}\label{eqPhi}
\Phi \eq W_d + (W_d + \beta) + V_{d + e + f} = W_d + (\Xi_\beta \cap W)
	\end{equation}
	on $W$.
	Write $\xi$ for the restriction of $[\Xi]$ to $W$, considered as an algebraic equivalence class, and let $\pt$ be the class of a point.
	Note that $\xi^2 = 4! \pt = 24 \pt$.
	The normal bundle sequence for $W \into B$ gives
	\[
		\todd(W) = \todd(\cO_B(\Xi))^{-2} = \left(1 + \frac{\xi}{2} + 2 \pt\right)^{-2} = 1 - \xi + 14 \pt.
	\]
	By \cref{manyIntersections}\cref{algequivalent} and \cref{eqPhi}, $[\Phi] = \xi + \omega$, where $\omega \eq [W_d]$.
	Therefore
	\[
		\frac{1}{2} [\Phi]^2 = \frac{1}{2} (24 \pt + 2 \xi \omega) = 12 \pt + \xi \omega = 20 \pt
	\]
	(see \cref{intersection numbers}), which gives
	\[
		\ch(\cO_W(n \Phi)) \todd(W) = (1 + (\xi + \omega) n + 20 \pt n^2) (1 - \xi + 14 \pt).
	\]
	The coefficient of $\pt$ in this expression is
	\[
		\chi(\cO_W(n \Phi)) = 20 n^2 - (\xi + \omega) \xi n + 14 = 20 n^2 - 32 n + 14.
	\]
	The quotient map $W \tou\psi \oW$ identifies the disjoint curves $W_d$ and $W_d + \beta$.
	There is a short exact sequence
	\[
		0 \to \cO_\oW \to \psi_* \cO_W \to \psi_* \cO_{W_d} \to 0.
	\]
	Twisting by $\Psi \eq \res{\T_0}{\oW}$ and using the projection formula gives
	\[
		0 \to \cO_\oW(n \Psi) \to \psi_* \cO_W(n \Phi) \to \psi_* \cO_{W_d}(n \Phi) \to 0.
	\]
	Since $n \Phi$ has degree $8 n$ on the genus $9$ curve $W_d$, the Hilbert polynomial of $\oW$ is
	\[
		20 n^2 - 32 n + 14 - (8 n - 8) = 20 n^2 - 40 n + 22,
	\]
	as required.
\end{proof}

\begin{lemma}\label{specialPullbacks}
	If $\xi \eq [\Xi]$ and $\omega \eq [W_d]$, then
	\[
		[\tW] = (\xi^2, \xi - \omega) \text{ in } \achow_2(\tB) = \achow_2(B) \oplus \achow_1(\Xi \cap \Xi_\beta),
	\]
	where $\achow_d$ is the group of $d$-dimensional algebraic cycles modulo algebraic equivalence.
	Consequently $[\tW]^2 = 16$.
\end{lemma}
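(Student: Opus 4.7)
The plan is to use the standard description of the Chow groups of a blowup. Since $\pi \colon \tB \to B$ is the blowup along the smooth codimension-$2$ center $Z \eq \Xi \cap \Xi_\beta$, the exceptional divisor $j \colon \Delta \into \tB$ is a $\bP^1$-bundle $p \colon \Delta \to Z$, and $\achow_2(\tB)$ decomposes (up to sign conventions) as $\pi^* \achow_2(B) \oplus j_* p^* \achow_1(Z)$. The first coordinate of a class is recovered as $\pi_*$, and the second (up to sign) as $p_* \circ j^*$.

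For the first coordinate of $[\tW]$: since $\pi|_{\tW} \colon \tW \to W$ is an isomorphism by \cref{blowupintersections}, we have $\pi_*[\tW] = [W]$, and $[W] = \xi^2$ because $W = \Xi_{[d,e]} \cap \Xi_{[d,f]}$ is the proper intersection of two divisors algebraically equivalent to $\Xi$. For the second coordinate: the same lemma gives the isomorphism $\Delta \cap \tW \isoto V_{d+e+f}$, hence $p_* j^*[\tW] = [V_{d+e+f}]$ in $\achow_1(Z)$. To identify $[V_{d+e+f}]$ with $\xi - \omega$, I would move $\Xi$ to a translate $\Xi_\gamma$ meeting $Z$ properly. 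Taking $\gamma = [e,t]$ with $t \in \tY$ generic, \cref{manyIntersections}\cref{tripleIntersection} applied to $\Xi \cap \Xi_{[e,f]} \cap \Xi_{[e,t]}$ writes this curve in $Z$ as $W_e \cup V_{e+t+f}$. By \cref{betterAlgEquivalence}, all embeddings of $\tY_\lambda$ in $Z$ (in particular $W_e$) have the common algebraic-equivalence class $\omega$; by an analogous family argument (varying $t$) $[V_{e+t+f}] = [V_{d+e+f}]$ in $\achow_1(Z)$. Hence $\xi|_Z = \omega + [V_{d+e+f}]$, so $[V_{d+e+f}] = \xi - \omega$.

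To compute $[\tW]^2 = 16$, I use the intersection pairing on $\tB$ in this decomposition. The cross term $\pi^*\xi^2 \cdot j_* p^*(\xi - \omega)$ vanishes: by the projection formula it reduces to the product on the surface $Z$ of the $0$-cycle class $\xi^2|_Z$ with the divisor class $\xi - \omega$, which lies in $\achow_{-1}(Z) = 0$. The $\achow_2(B)$-diagonal contributes $\int_B \xi^4 = 4! = 24$. For the $\achow_1(Z)$-diagonal, the self-intersection formula $j^* j_* = c_1(\cO_\Delta(-1))$ together with $p_*(c_1(\cO_\Delta(1))) = 1$ gives $(j_* p^*(\xi - \omega))^2 = -\int_Z (\xi - \omega)^2$. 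The three pieces on $Z$ are: $\int_Z \xi^2 = \int_B \xi^2 \cdot [Z] = \xi^4 = 24$; $\int_Z \xi \cdot \omega = \deg \cO_B(\Xi)|_{W_d} = 8$, exactly as in the proof of \cref{identifyfiber}; and $\omega^2 = 0$, since e.g.\ $W_e$ and $W_f$ are algebraically equivalent disjoint embeddings of $\tY_\lambda$ in $Z$ (\cref{betterAlgEquivalence}). Therefore $[\tW]^2 = 24 - (24 - 16 + 0) = 16$.

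The main obstacle is the identification of the second coordinate as $\xi - \omega$: the clean relation $\Xi|_W = [W_d] + [V_{d+e+f}]$ on the surface $W$ is a direct consequence of \cref{manyIntersections}\cref{tripleIntersection}, but it does not immediately yield an equality of classes on $Z$, because $W_d$ is not in general contained in $Z$. The translate-plus-algebraic-equivalence trick above is the needed bridge; once in hand, the computation of $[\tW]^2$ is routine blowup intersection theory.
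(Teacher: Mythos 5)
Your proof is correct and follows essentially the same route as the paper: the Beauville blowup decomposition with coordinates $(\varphi_*[\tW],\psi_*(\res{[\tW]}{\Delta}))$ identified via \cref{blowupintersections}, and the self-intersection computed from $\int_B\xi^4=24$, $\int_Z\xi^2=24$, $\xi\cdot\omega=8$, $\omega^2=0$ (the last being exactly \cref{intersection numbers redux} for $X=Y$, with no need for the disjointness claim). The only added detail is your verification that $[V_{d+e+f}]=\xi-\omega$ in $\achow_1(\Xi\cap\Xi_\beta)$, which the paper leaves implicit; there you can skip the generic-$t$ family argument by applying \cref{manyIntersections}\cref{tripleIntersection} directly with $p=e$, $q=f$, $r=d$, giving $\Xi\cap\Xi_{[e,f]}\cap\Xi_{[e,d]}=W_e\cup V_{d+e+f}$ and hence the relation at once.
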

\begin{proof}
	Let $\varphi : \tB \to B$ be the blowup and $\psi : \Delta \to \Xi \cap \Xi_\beta$ its restriction to $\Delta$.
	By standard properties of blowing up \cite[0.1.3.ii]{Beauville1977-2}
	\[
		[\tW] = (\varphi_* [\tW], \psi_* (\res{[\tW]}{\Delta})) = ([W], [V_{d + e + f}]) = (\xi^2, \xi - \omega).
	\]
	Therefore, by \cref{intersection numbers redux},
	\[
		[\tW]^2 = \int_B \xi^4 + \int_\Delta c_1(\cO_\Delta(\Delta)) \psi^*(\xi - \omega)^2 = 4! - \int_{\Xi \cap \Xi_\beta} \xi^2 - 2 \xi \omega + \omega^2 = 2 \int_B \frac{\xi^4}{3} = 16,
	\]
	as required.
\end{proof}

\section{Proof of the main theorem}\label{secProof}

\begin{proposition}
	If $t \in \tT$ and $\T_t$ is smooth, then $[\cV_t]^2 = 16$. %TODO: degree(V^2)?
\end{proposition}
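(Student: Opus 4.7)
The plan rests on two observations: deformation invariance of self-intersection numbers in a flat family with smooth total space, and the computation of the central-fiber intersection via the normalization $\nu : \tB \to \T_0$, where \cref{specialPullbacks} already provides the answer.

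First I would show that $[\cV_t]^2_{\T_t}$ is constant on the locus where $\T_t$ is smooth. Because $\T \to \tT$ has smooth total space and $\tT$ is a smooth curve, every fiber $\T_t$ is a Cartier divisor in $\T$, cut out transversally by a regular parameter on $\tT$, and any two fibers are rationally equivalent in $A_4(\T)$. The self-intersection class $[\cV] \cdot [\cV] \in A_1(\T)$ is then well defined. The transversality gives $N_{\cV_t/\T_t} = \res{N_{\cV/\T}}{\cV_t}$ for smooth $\T_t$, so
\[
	[\cV_t]^2_{\T_t} = \deg\bigl([\cV]^2 \cdot [\T_t]\bigr) = \deg\bigl([\cV]^2 \cdot [\T_0]\bigr),
\]
reducing the problem to computing the right-hand degree at $t = 0$.

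Next I would compute this degree on the normalization. By \cref{identifyfiber}, $\cV \cap \T_0 = \oW$, and by \cref{blowupintersections} the preimage $\nu^{-1}(\oW)$ is the smooth surface $\tW \subset \tB$, with $\tW \to \oW$ birational. The normalization $\nu$ glues $\tXi$ to $\tXi_\beta$ (after translation by $\beta$), and on $\tW$ this gluing identifies the two curves $W_d = \tW \cap \tXi$ and $W_d + \beta = \tW \cap \tXi_\beta$. The crucial point is that these two curves are \emph{disjoint} in $\tW$, as noted in the proof of \cref{blowupintersections} using $[\tY_\lambda]^2 = 0$ from \cref{intersection numbers redux}. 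I would then move $\cV$ to an algebraically equivalent cycle $\cV'$ meeting $\cV$ properly, whose restriction to $\T_0$ corresponds, upstairs on $\tB$, to an algebraic equivalent $\tW'$ of $\tW$ meeting it properly. Because the two glued curves are disjoint, no intersection contributions are created or lost in passing between $\T_0$ and its normalization, giving
\[
	\deg\bigl([\cV]^2 \cdot [\T_0]\bigr) = [\tW]^2_{\tB} = 16
\]
by \cref{specialPullbacks}.

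The main obstacle is the last transition: justifying that the $0$-cycle degree computed in the smooth $5$-fold $\T$ coincides with the self-intersection computed in the smooth normalization $\tB$ of the singular central fiber $\T_0$. This is where the disjointness of $W_d$ and $W_d + \beta$ in $\tW$ is essential; without it, the identification performed by $\nu$ could affect the intersection count. With this disjointness, the normalization is an isomorphism in a neighborhood of the intersection locus, and the equality reduces to a local comparison of intersection multiplicities at points of $\oW$ away from $\Xi \cap \Xi_\beta$, where $\nu$ is already an isomorphism.
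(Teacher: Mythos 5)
Your skeleton (deformation invariance of the specialized class $[\cV]^2$, identification of the central fiber with $\oW$ via \cref{identifyfiber}, and the final appeal to \cref{specialPullbacks}) is the same as the paper's, but the step you yourself flag as "the main obstacle" is a genuine gap, and the disjointness of $W_d$ and $W_d + \beta$ does not close it. Write $\psi : \tB \to \T_0$ for the normalization (your $\nu$) and $\iota : \T_0 \into \T_\tT$ for the inclusion. What one knows a priori is only that the class on $\tB$ computing the specialized intersection, namely $(\iota\psi)^*[\cV]$ (defined via the operational class in $\chow^2(\T_\tT)$ dual to $[\cV]$), has the \emph{same pushforward} to $\T_0$ as $[\tW]$; their difference $\delta$ is a $2$-cycle class with $\psi_*\delta = 0$, hence supported over the non-normal locus $\tXi \amalg \tXi_\beta$, and what must be shown is $\int_\tB \delta \cdot [\tW] = 0$. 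This is exactly the possible "creation or loss of intersection contributions" along the glued divisors, and disjointness of the two glued curves inside $\tW$ says nothing about it. The paper kills $\delta$ by a localization-sequence argument (using $\chow(U, 1)$ for the smooth locus $U$) to represent it by a class $\gamma$ on $\tXi \amalg \tXi_\beta$ with $\psi_{\Xi*}\gamma = 0$, and then uses \cref{blowupintersections} — the identification of $\res{[\tW]}{\tXi}$ and $\res{[\tW]}{\tXi_\beta}$ with $[W_d]$ and $[W_d + \beta]$, which are matched under the gluing — to conclude $\int \gamma \cdot \res{[\tW]}{\tXi \amalg \tXi_\beta} = \int_\Xi \psi_{\Xi*}\gamma \cdot [W_d] = 0$. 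Nothing in your sketch performs this control.

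The moving argument you substitute for it is not available in the form you invoke. The moving lemma on the smooth total space lets you replace $[\cV]$ by an equivalent cycle meeting $\cV$ properly, but it gives no control over where the resulting $0$-cycle lands inside the special fiber — you cannot force it off the non-normal locus of $\T_0$ — and it does not guarantee that the strict transform on $\tB$ of the moved cycle's restriction to $\T_0$ is algebraically equivalent to $\tW$: corrections supported over $\tXi \cup \tXi_\beta$ (precisely the class $\delta$ above) can appear, and those are what could change the count. Your closing sentence also mislocates the bad locus: $\psi$ fails to be injective over the whole glued divisor, i.e.\ the image of $\tXi \cup \tXi_\beta$, whose trace on $\oW$ is the entire glued curve $W_d \sim W_d + \beta$ (a curve, not the surface $\Xi \cap \Xi_\beta$, which lives in $B$). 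So the "local comparison of multiplicities away from the singular locus" would have to exclude a whole curve inside $\oW$, and proving that no intersection contribution concentrates there is exactly the content of the paper's $\delta$-argument; it is not a consequence of $W_d \cap (W_d + \beta) = \emptyset$.
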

\begin{proof}
	The class $[\cV] \in \chow_3(\T_\tT)$ defines a family of $0$-cycle classes (i.e., a $1$-cycle) $[\cV]^2 \in \chow_1(\T_\tT)$ over $\tT$, and $[\cV_t]^2$ is the degree of the specialization of $[\cV]^2$ at $t$ \cite[10.1]{fulton84}.
	Since $\T_\tT \to \tT$ is flat, specialization at $t$ is the same as restricting to $\T_t$, for any $t \in \tT$.
	We can specialize $[\cV]^2$ to the central fiber, but since $\T_0$ is singular the meaning of $[V]^2$ is not clear.
	
	To remedy this, we pass to the operational Chow rings $\chow^*(\T_D)$ and $\chow^*(\T_\tT)$, which act on $\chow_*(\T_D)$ and $\chow_*(\T_\tT)$ via cap product.
	There is a unique ``Poincar\'e dual'' $\nu \in \chow^2(\T_\tT)$ such that $\nu \cap [\T_\tT] = [\cV]$ \cite[17.4]{fulton84}.
	Using the cap product on Chow groups \cite[8.1]{fulton84} for the inclusion $\iota : \T_0 = \T_D \into \T_\tT$, one checks that $\iota^* \nu \cap [\T_D] = [V]$ and $\iota^* \nu^2 \cap [\T_D] = \iota^* [\cV]^2$.
	It follows that
	\[
		\int_{\T_t} [\cV_t]^2 = \int_{\T_D} \iota^* [\cV]^2 = \int_{\T_D} \iota^* \nu \cap [V] = \int_{\T_D} \iota^* \nu \cap \psi_* [\tW] = \int_\tB \psi^* \iota^* \nu \cap [\tW] = \int_\tB (\iota \psi)^* [\cV] \cdot [\tW],
	\]
	where $\psi : \tB \to \T_0$ denotes the normalization.
	If $\delta \eq (\iota \psi)^* [\cV] - [\tW]$ then
	\[
		\psi_* \delta = \psi_*(\psi^* \iota^* \nu \cap [\tB]) - \psi_* [\tW] = \iota^* \nu \cap [\T_D] - [V] = 0.
	\]
	Let $U \eq \tB \minus (\tXi \amalg \tXi_\beta)$ be the smooth locus of $\T_0$, and consider the localization sequences
	\begin{tikzcenter}[node distance = 10ex and 7em]
		\node (ch1BU) {$\chow(U, 1)$};
		\node[right = of ch1BU] (chXi2) {$\chow(\tXi \amalg \tXi_\beta)$};
		\node[right = of chXi2] (chB) {$\chow(\tB)$};
		\node[right = of chB] (chBU) {$\chow(U)$};
		\node[right = of chBU] (chBx) {$0$};

		\node[below = of ch1BU] (ch1TU) {$\chow(U, 1)$};
		\node[right = of ch1TU] (chXi) {$\chow(\Xi)$};
		\node[right = of chXi] (chT) {$\chow(\T_0)$};
		\node[right = of chT] (chTU) {$\chow(U)$};
		\node[right = of chTU] (chTx) {$0$};

		\draw[-To]
			(ch1BU) edge (chXi2)
			(chXi2) edge (chB)
			(chB) edge (chBU)
			(chBU) edge (chBx)

			(ch1TU) edge (chXi)
			(chXi) edge (chT)
			(chT) edge (chTU)
			(chTU) edge (chTx)

			(ch1BU) edge[Eq] (ch1TU)
			(chXi2) edge node {$\psi_{\Xi *}$} (chXi)
			(chB) edge node {$\psi_*$} (chT)
			(chBU) edge[Eq] (chTU);
	\end{tikzcenter}
	(here $\chow(U, 1)$ is one of Bloch's higher Chow groups \cite{Bloch94}, but all we need is a group depending only on $U$, which is easy to construct with a little thought).
	Since $\psi_* \delta =0$, the diagram implies $\res{\delta}{U} =0$, hence one can find $\gamma \in \chow_2(\tXi \amalg \tXi_\beta)$ mapping to $\delta \in \chow_2(\tB)$.
	After possibly subtracting an element of $\chow(U, 1)$, we may assume that $\psi_{\Xi *} \gamma = 0 \in \chow_2(\Xi)$.
	\Cref{blowupintersections} implies that
	\[
		\int_\tB \delta \cdot [\tW] = \int_{\tXi \amalg \tXi_\beta} \gamma \cdot \res{[\tW]}{\tXi \amalg \tXi_\beta} = \int_\Xi \psi_{\Xi *} \gamma \cdot [W_d] = 0.
	\]
	Therefore $[\cV_t]^2 = [\tW]^2 = 16$, as required.
\end{proof}

\begin{proof}[Proof of \cref{thmmain}]
	For a very general ppav $(A, \T)$, one constructs a degeneration as above to get $[\tX_\lambda]^2 = 16$ for every Prym-curve $X$.
	The other intersection numbers are given by \cref{intersection numbers}.

	If $X$ and $Y$ are Prym-curves corresponding to lines $E$ and $F$ on a cubic surface, then \cref{intersection numbers} says that $[\tX_\lambda] . [\tY_\lambda] = 2 (7 - E . F)$.
	Choose mutually unrelated Prym-curves $X_1, \dots, X_6$ and another one $Y$ related to two of the $X_i$.
	The argument of \cref{Wgenerators} shows that the lattice $\bL$ spanned by classes of the form $[\tX_{i \lambda}] - [\tY_\lambda]$ is isometric to $\esix(2)$, and therefore spans $\bK_\bQ^+$.
	It is straightforward to check, e.g., by the nondegeneracy of the pairing, that $\bL$ contains $[\tX_\lambda] - [\tZ_\lambda]$ for any two Prym-curves $X$ and $Z$.
	The $\bQ$-vector space spanned by the $[\tX_\lambda]$ is at least $7$-dimensional, as $[\tX_\lambda] \notin \bK$, so it has to be $\bQ [\T]^2 + \bK_\bQ^+$.
\end{proof}

\section{Appendix}\label{secApp}

\subsection{Cohomological Calculations} Let $X$ be a smooth curve of genus $6$ and $\tX \tou\pi X$ an \'etale double cover.

\begin{lemma}\label{symmetric power etale pushforward}
	Let $\eta, \theta \in H^2(X^{(6)}, \bZ)$ be, respectively, the class of $X^{(5)}$ (plus a point), and the class of the polarization inherited from $\Pic^6(X)$.
	If $\teta, \ttheta \in H^2(\tX^{(6)}, \bZ)$ are the corresponding classes for $\tX$, then $\pi_*^{(6)} : H^*(\tX^{(6)}, \bZ) \to H^*(X^{(6)}, \bZ)$ has the following description:
	\[
		\begin{array}{|c|c|c|c|c|c|}
			\hline
			\alpha & \teta & \ttheta & \teta^2 & \teta \ttheta & \ttheta^2 \\
			\hline
			\pi_*^{(6)} \alpha & 32 \eta & 32 (5 \eta + \theta) & 16 \eta^2 & 16 \eta (5 \eta + \theta) & 16 (20 \eta^2 + 10 \eta \theta + \theta^2) \\
			\hline
		\end{array}
	\]
\end{lemma}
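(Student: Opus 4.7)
The plan is to factor $\pi^{(6)}$ through the symmetric-group quotients, reducing the computation to a Künneth calculation on Cartesian products. Let $\tq\colon \tX^6 \to \tX^{(6)}$ and $q\colon X^6 \to X^{(6)}$ be the $S_6$-quotient maps, and $\pi^6\colon \tX^6 \to X^6$ the $6$-fold Cartesian power of $\pi$, which is \'etale of degree $64$. The commutativity $\pi^{(6)}\tq = q\pi^6$, together with $\tq_*\tq^* = 6!\cdot\id$ on $H^*(\tX^{(6)},\bQ)$, yields
\[
	\pi^{(6)}_*\alpha \;=\; \frac{1}{6!}\,q_*\pi^6_*\tq^*\alpha
\]
for every $\alpha\in H^*(\tX^{(6)},\bQ)$.

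First I would expand $\tq^*\teta$ and $\tq^*\ttheta$ in Künneth form. Let $\text{pt}_i\in H^2(\tX^6)$ be the class of a point pulled back from the $i$-th factor, and let $\{a_k,b_k\}_{k=1}^{11}$ be a symplectic basis of $H^1(\tX,\bQ)$. Expanding the pullback of $\theta_\tX$ along the sum map $\tX^6\to J\tX$ gives
\[
	\tq^*\teta \;=\; \sum_{i=1}^6 \text{pt}_i,\qquad \tq^*\ttheta \;=\; 11\sum_{i=1}^6 \text{pt}_i + \sum_{i\ne j}\theta_\tX^{(i,j)},
\]
where $\theta_\tX^{(i,j)} \eq \sum_k (a_k)^{(i)}(b_k)^{(j)}\in H^2(\tX^6)$ is a ``cross'' class living in slots $i$ and $j$.

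Next I apply $\pi^6_* = \pi_*^{\otimes 6}$ factor by factor, using $\pi_*(1) = 2$, $\pi_*(\text{pt}_\tX) = \text{pt}_X$, and the fact that $\pi_*\colon H^1(\tX)\to H^1(X)$ vanishes on the Prym part $H^1(\tX)^-$ and satisfies $\pi_*\pi^* = 2\cdot\id$. Choosing the symplectic basis of $H^1(\tX,\bQ)$ adapted to $H^1(\tX)^+\oplus H^1(\tX)^-$ (with the first twelve elements in $H^1(\tX)^+$), a straightforward count of factors of $2$ coming from the ``trivial'' Künneth slots gives $\pi^6_*\text{pt}_i = 32\,\text{pt}_i^X$ and $\pi^6_*\theta_\tX^{(i,j)} = 32\,\theta_X^{(i,j)}$ for $i\ne j$. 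The $S_6$-invariant outcome on $X^6$ is identified as $q^*$ of an explicit combination of $\eta$ and $\theta$ via the analogous identities $q^*\eta = \sum_i \text{pt}_i^X$ and $q^*\theta = 6\sum_i \text{pt}_i^X + \sum_{i\ne j}\theta_X^{(i,j)}$. Dividing by $6!$ via $q_*q^* = 6!$ then yields $\pi^{(6)}_*\teta = 32\eta$ and $\pi^{(6)}_*\ttheta = 32(5\eta + \theta)$.

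The remaining three formulas follow by applying the same procedure to the Künneth squares $\tq^*\teta^2 = (\tq^*\teta)^2$, $\tq^*(\teta\ttheta) = \tq^*\teta\cdot\tq^*\ttheta$, and $\tq^*\ttheta^2 = (\tq^*\ttheta)^2$; the expansions simplify at once using $\text{pt}_i^2 = 0$ and $\text{pt}_i\cdot\alpha = 0$ for any $\alpha\in H^1(\tX^6)$ localized in slot $i$. The delicate case is $\ttheta^2$: one sums $(\tq^*\ttheta)^2$ over ordered pairs of index-pairs $((j,k),(j',k'))$ with $j\ne k$ and $j'\ne k'$, grouped by how $\{j,k\}$ and $\{j',k'\}$ intersect. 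The crucial identities are $\theta_\tX^{(j,k)}\theta_\tX^{(j,k)} = 0$ and $\theta_\tX^{(j,k)}\theta_\tX^{(k,j)} = -11\,\text{pt}_j\text{pt}_k$ (from $a_l b_l = \text{pt}_\tX$), while pairs sharing a single index reduce (up to sign) to $\text{pt}\cdot\theta_\tX$ terms. These assemble, after applying $\pi^6_*$, to an $S_6$-invariant class equal to $16\,q^*(20\eta^2 + 10\eta\theta + \theta^2)$. The main obstacle throughout the last step is purely organizational: carefully tracking the ordered-tuple sums and the signs from graded-commutativity of degree-$1$ classes across the six factors; once this bookkeeping is done, matching the invariant expressions against $q^*\eta^2$, $q^*(\eta\theta)$, and $q^*\theta^2$ immediately produces the stated values $16\eta^2$, $16\eta(5\eta+\theta)$, and $16(20\eta^2 + 10\eta\theta + \theta^2)$.
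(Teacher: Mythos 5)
Your proposal is correct and follows essentially the same route as the paper: transfer the computation to the Cartesian powers via the degree-$6!$ quotient maps, expand $\tq^*\teta$, $\tq^*\ttheta$ in a symplectic/K\"unneth basis, and push forward factor by factor using $\pi_*(1)=2$ and the fact that the cross class $\sum_k a_k^{(i)}b_k^{(j)}$ pushes to twice the corresponding class for $X$ (the paper does the same bookkeeping with an integral basis in which $\tlambda_1\mapsto 2\lambda_1$, $\tmu_1\mapsto\mu_1$, rather than your $\bQ$-basis adapted to the $\pm$-decomposition). The only point worth making explicit is that your argument is carried out with $\bQ$-coefficients, so to conclude the stated integral formulas one should invoke the torsion-freeness of $H^*(X^{(6)},\bZ)$, which the paper cites from Macdonald.
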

\begin{proof}
	Let $(\lambda_1, \mu_1, \dots, \lambda_6, \mu_6)$ be a symplectic basis for $H^1(X, \bZ)$ (with $\lambda_i \cdot \mu_i = 1$ for all $i$).
	Using the picture in \cite[Proposition 12.4.2]{BirkenhakeLange04} we can construct a symplectic basis
	\[
		\tlambda_1, \tmu_1, \lambda_2^+, \mu_2^+, \lambda_2^-, \mu_2^-, \dots, \lambda_6^+, \mu_6^+, \lambda_6^-, \mu_6^-
	\]
	for $H^1(\tX, \bZ)$ such that $\pi_* : H^1(\tX, \bZ) \to H^1(X, \bZ)$ sends $\tlambda_1 \mapsto 2 \lambda_1$ and $\tmu_1 \mapsto \mu_1,$ while the $\lambda_i^\pm$ and $\mu_i^\pm$ are sent to $\lambda_i$ and $\mu_i$ respectively.
	Let $\rho_k : X^6 \to X$ be the projections, and note that
	\[
		\sum_{k = 1}^6 \rho_k^* \lambda_i, \sum_{k = 1}^6 \rho_k^* \mu_i \text{ and } \sum_{k = 1}^6 \rho_k^*(\lambda_i \mu_i)
	\]
	all descend to classes in $H^*(X^{(6)}, \bZ)$.
	We denote the first two by $\xi_i$ and $\zeta_i$ respectively; the latter is independent of $i$ and descends to $\eta$ \cite[(3.1) and (14.2)]{Macdonald62}.
	Moreover $\theta = \sum_{i = 1}^6 \sigma_i$, where $\sigma_i \eq \xi_i \zeta_i$.
	Similarly $\ttheta$ is the sum of $\tsigma_1 \eq \txi_1 \tzeta_1$ and the $\sigma_i^\pm \eq \xi_i^\pm \zeta_i^\pm$.

	Since $H^*(X^{(6)}, \bZ)$ is torsion-free \cite[(12.3)]{Macdonald62}, while $H^*(X^{(6)}, \bZ) \into H^*(X^6, \bZ) \to H^*(X^{(6)}, \bZ)$ is multiplication by $6!$ (and likewise for $\tX$), we can compute $\pi_*^{(6)}$ using
	\[
		\pi_*^{\times 6} : H^*(\tX^6, \bZ) \to H^*(X^6, \bZ).
	\]
	Note that the cross product is natural for morphisms of even relative (real) dimension (up to a sign in general) \cite[5.3.10 and 5.6.21]{Spanier}.
	In particular $\pi_*^{(6)} \teta$ corresponds to
	\[
		\sum_{k = 1}^6 \pi_*^{\times 6} \trho_k^*(\tlambda_1 \tmu_1) = \sum_{k = 1}^6 \rho_1^*(\pi_* 1) \cdots \rho_k^*(\pi_*(\tlambda_1 \tmu_1)) \cdots \rho_6^*(\pi_* 1) = 32 \sum_{k = 1}^6 \rho_k^*(\lambda_1 \mu_1),
	\]
	where $\trho_k : \tX^6 \to \tX$ are the projections.
	Therefore $\pi_*^{(6)} \teta = 32 \eta$.
	Similarly $\pi_*^{(6)} \tsigma_1 = 32 \sigma_1$, because
	\begin{align*}
		\sum_{k = 1}^6 \sum_{l = 1}^6 \pi_*^{\times 6}(\trho_k^* \tlambda_1 \trho_l^* \tmu_1)
		&= \sum_{k = 1}^6 \left(\pi_*^{\times 6} \trho_k^*(\tlambda_1 \tmu_1) + \sum_{l \neq k} \pi_*^{\times 6}(\trho_k^* \tlambda_1 \trho_l^* \tmu_1)\right) \\
		&= \sum_{k = 1}^6 \left(32 \rho_k^*(\lambda_1 \mu_1) + 16 \sum_{l \neq k} \rho_k^*(2 \lambda_1) \rho_l^* \mu_1\right) \\
		&= 32 \sum_{k = 1}^6 \sum_{l = 1}^6 \rho_k^*\lambda_1 \rho_l^* \mu_1.
	\end{align*}
	The calculation for $\pi_*^{(6)} \sigma_i^\pm = 16 (\eta + \sigma_i)$ is almost the same, but the terms with $l = k$ and $l \neq k$ have different coefficients (since $2 \lambda_1$ becomes $\lambda_i$).
	The sum of the $l \neq k$ terms with $\frac{1}{2}$ times the $l = k$ terms descends to $16 \sigma_i$.
	We are left with
	\[
		16 \sum_{k = 1}^6 \rho_k^*(\lambda_i \mu_i),
	\]
	which descends to $16 \eta$.
	Adding these up for all $i$ gives the required formula for $\pi_*^{(6)} \ttheta$.

	The projection formula implies that $\pi^*(\lambda_1 \mu_1) = 2 \tlambda_1 \tmu_1$, so $\pi^{(6) *} \eta = 2 \teta$ by definition.
	Therefore
	\[
		\pi_*^{(6)}(2 \teta^2) = \pi_*^{(6)}(\teta \pi^{(6) *} \eta) = 32 \eta^2,
	\]
	giving the desired formula for $\pi_*^{(6)}(\teta^2)$.
	The calculation for $\pi_*^{(6)}(\teta \ttheta)$ is similar.
	
	Computing $\pi_*^{(6)}(\ttheta^2)$ is more complicated.
	If $i \neq j$ then $\sigma_i^\pm \sigma_j^\pm$ corresponds to
	\begin{equation}\label{sigma sigma}
		\sum_{k = 1}^6 \sum_{l = 1}^6 \sum_{m = 1}^6 \sum_{n = 1}^6 \trho_k^* \lambda_i^\pm \trho_l^* \mu_i^\pm \trho_m^* \lambda_j^\pm \trho_n^* \mu_j^\pm.
	\end{equation}
	Let $\tS$ be the sum of the terms in \eqref{sigma sigma} whose indices are distinct.
	Also let $\tT$, $\tU$ and $\tV$ the sums of those for which $k = l$, $m = n$, or both (with all the other pairs distinct).
	The remaining terms of \eqref{sigma sigma} all vanish (e.g.\ when $k = m$, such a term contains $\rho_k^*(\lambda_i^\pm \lambda_j^\pm) = 0$), so $\tS + \tT + \tU + \tV$ descends to $\sigma_i^\pm \sigma_j^\pm$.
	Moreover, for degree reasons
	\[
		\tV = \sum_{k = 1}^6 \sum_{m \neq k} \trho_k^*(\lambda_i^\pm \mu_i^\pm) \trho_m^*(\lambda_j^\pm \mu_j^\pm) = \sum_{k = 1}^6 \sum_{m = 1}^6 \trho_k^*(\lambda_i^\pm \mu_i^\pm) \trho_m^*(\lambda_j^\pm \mu_j^\pm)
	\]
	descends to $\teta^2$.
	Similarly, $\tT + \tV$ and $\tU + \tV$ correspond to $\teta \sigma_j^\pm$ and $\sigma_i^\pm \teta$ respectively.

	Define $S, T, U, V \in H^4(X^{(6)}, \bZ)$ in an analogous way.
	Since each term in $\tS$ involves four distinct projections (with two missing), $\pi_*^{\times 6} \tS = 4 S$.
	Applying this reasoning to the other sums gives
	\[
		\pi_*^{\times 6}(\tS + \tT + \tU + \tV) = 4 S + 8 (T + U) + 16 V = 4 ((S + T + U + V) + (T + V) + (U + V) + V),
	\]
	and hence
	\[
		\pi_*^{(6)}(\sigma_i^\pm \sigma_j^\pm) = 4 (\sigma_i \sigma_j + \eta \sigma_j + \sigma_i \eta + \eta^2) = 4 (\eta + \sigma_i) (\eta + \sigma_j).
	\]

	The same formula clearly works for $\sigma_i^\pm \sigma_j^\mp$.
	However, it breaks down for $\sigma_i^\pm \sigma_i^\mp$, because the corresponding class in $H^4(X^6, \bZ)$ has extra terms with $k = n$ and $l = m$ (so $S + T + U + V$ does not correspond to $\sigma_i^2$).
	In this case (i.e., when $i = j$), swapping $k$ and $m$ takes each term of $S$ to its negative, so $S = 0$.
	On the other hand, $T + V$ and $U + V$ both correspond to $\eta \sigma_i$.
	It follows that
	\[
		\pi_*^{(6)}(\sigma_i^\pm \sigma_i^\mp) = 8 \eta \sigma_i + 8 \eta \sigma_i = 16 \eta \sigma_i,
	\]
	because it corresponds to $8 (T + U) + 16 V = 8 (T + V) + 8 (U + V)$.

	The calculation for $\tsigma_1 \sigma_j^\pm$ is also a little different, because $S$ and $U$ pick up a factor of two when pushing forward $\tlambda_1$ ($T$ and $V$ do not because they involve $\tlambda_1 \tmu_1$ instead).
	In other words
	\[
		\pi_*^{\times 6}(\tS + \tT + \tU + \tV) = 8 (S + T) + 16 (U + V) = 8 ((S + T + U + V) + (U + V))
	\]
	and hence
	\[
		\pi_*^{(6)}(\tsigma_1 \sigma_j^\pm) = 8 (\sigma_1 \sigma_j + \sigma_1 \eta) = 8 \sigma_1 (\eta + \sigma_j).
	\]
	The formula for $\pi_*^{(6)}(\ttheta^2)$ can be found by adding all of these terms.
\end{proof}

\begin{remark}
	The general formula, for a curve $X$ of genus $g$, is
	\[
		\pi_*^{(d)}(\teta^p \ttheta^q) = 2^{d - p - q} \sum_{l = 0}^q \binom{g - 1}{q - l} \frac{q!}{l!} \eta^{p + q - l} \theta^l.
	\]
	This can be proved by generalizing the above argument (see \cite{Conder-2019} for details).
\end{remark}

\subsection{The ranks of $\bK^+$ and $\bK^-$}\label{Appranks}

Although the results of this Paragraph follow from \cref{Apphodgenums}, we feel that it is worth including because the proof here is considerably simpler than the general calculation of Hodge numbers in \cref{Apphodgenums} and also provides an independent verification of \cref{Apphodgenums}.

We know that $\Theta \into A$ satisfies the Lefschetz hyperplane theorem for
$H^*(-, \bQ)$. Since pullback and Gysin maps on cohomology are $(-1)$-equivariant,
the theorem also holds for $H^*(-, \bQ)^+$:

\begin{proposition}\label{propLefschetz}
\begin{enumerate} 
	\item For $n< g-1$, $H^n(\Theta,\bQ)^+ \gets H^n(A, \bQ)^+$ is an
		isomorphism. 
	\item For $g-1 < n \leq 2g-2$, $H^n(\Theta, \bQ)^+ \to H^{n+2}(A,
		\bQ)^+$ is an isomorphism.
	\item For $n=g-1$, the pullback $H^{g-1}(A,\bQ)^+ \to
		H^{g-1}(\Theta,\bQ)^+$ is an injection. \qed
\end{enumerate} 
\end{proposition}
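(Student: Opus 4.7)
The plan is to deduce this directly from the classical Lefschetz hyperplane theorem by exploiting the $(-1)$-equivariance, which is essentially what the author indicates in the sentence immediately preceding the statement.

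First I would observe that since $\T$ is symmetric (i.e.\ $(-1)_A(\T)=\T$), the inclusion $i:\T\into A$ is equivariant for the $-1$ involution acting on both $\T$ and $A$. Consequently both $i^*:H^*(A,\bQ)\to H^*(\T,\bQ)$ and the Gysin pushforward $i_*:H^*(\T,\bQ)\to H^{*+2}(A,\bQ)$ intertwine the induced $\bZ/2$-actions on the cohomology of $\T$ and $A$. Therefore these maps respect the eigenspace decompositions
\[
H^n(\T,\bQ)=H^n(\T,\bQ)^+\oplus H^n(\T,\bQ)^-,\qquad H^n(A,\bQ)=H^n(A,\bQ)^+\oplus H^n(A,\bQ)^-,
\]
and in particular restrict to $\bQ$-linear maps between the $+$-summands.

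Next I would invoke the classical Lefschetz hyperplane theorem for the smooth ample divisor $\T\subset A$ (of dimension $g-1$): the pullback $i^*:H^n(A,\bQ)\to H^n(\T,\bQ)$ is an isomorphism for $n<g-1$ and injective for $n=g-1$. Dually, by Poincar\'e duality on both $\T$ and $A$, the Gysin map $i_*:H^n(\T,\bQ)\to H^{n+2}(A,\bQ)$ is an isomorphism for $n>g-1$ and surjective for $n=g-1$ (this is the standard reformulation used, e.g., in \cite{IzadiWang2015}). Since these maps are direct sums of their restrictions to the $+$ and $-$ parts, whenever one of these maps is an isomorphism (resp.\ injective, surjective), the same is true of its restriction to the $+$-summand. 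Applying this for $n<g-1$ gives (1), for $g-1<n\le 2g-2$ gives (2), and for $n=g-1$ gives the injectivity statement (3).

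There is essentially no obstacle: the only thing to check beyond the standard Lefschetz package is the equivariance of $i^*$ and $i_*$, which is functoriality of pullback and Gysin under the equivariant map $i$. The statement of the proposition is a purely formal consequence, and no argument specific to $\bK$ or to the particular dimension $g=5$ is required.
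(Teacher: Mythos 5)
Your proposal is correct and is essentially the paper's own argument: the paper proves the proposition in one sentence (just before the statement, which is marked \qed) by noting that $\T\subset A$ satisfies the Lefschetz hyperplane theorem and that $i^*$ and the Gysin map are $(-1)$-equivariant since $\T$ is symmetric, so all conclusions restrict to the $+$-eigenspaces. You simply spell out the same reasoning in more detail, including the dual (Gysin) form used for part (2).
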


%\subsection{Computation of isotypic components}
Let $A[2]$ be the set of $2$-torsion points of $A$, and put $\Theta[2] := A[2] \cap \Theta$. We have $\#A[2] = 2^{2g}$ and, since $\T$ is smooth, $\#\T[2] = 2^{g-1}(2^g-1)$. 
Also let $A \tou\pi A^+$ and $\T \tou\pi \T^+$ be the quotients of $A$ and $\T$ by $-1$.

\begin{lemma}
	$\chi(A^+)=2^{2g-1}$ and $\chi(\Theta^+)=\frac{1}{2} (-1)^{g-1}g! + 2^{g-2}(2^g-1)$
\end{lemma}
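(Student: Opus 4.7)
The plan is to apply the standard Euler characteristic formula for the quotient by a finite group action. For an involution $\tau$ on a sufficiently nice space $X$ (here a smooth variety, or a CW complex, so the formula applies), stratifying $X$ into the fixed locus $X^\tau$ and the free locus, and using the fact that the free part is a double cover of its quotient, yields
\[
	\chi(X/\tau) = \tfrac{1}{2}\bigl(\chi(X) + \chi(X^\tau)\bigr).
\]
So first I would record this identity and then apply it in the two cases at hand, where $\tau = [-1]$.

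For $A$, the fixed locus of $[-1]$ is $A[2]$, which has cardinality $2^{2g}$, and $\chi(A)=0$ since $A$ is a real torus. Thus
\[
\chi(A^+) = \tfrac{1}{2}(0 + 2^{2g}) = 2^{2g-1}.
\]
For $\T$, the fixed locus is $\T[2]$ of cardinality $2^{g-1}(2^g-1)$ (given), so the identity reduces the second claim to computing $\chi(\T)$ and checking that $\chi(\T)=(-1)^{g-1}g!$.

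To compute $\chi(\T)$ I would use adjunction together with the triviality of $T_A$. From the normal bundle sequence $0 \to T_\T \to T_A|_\T \to N_{\T/A} \to 0$ and $c(T_A)=1$, one gets
\[
c(T_\T) = c(N_{\T/A})^{-1} = (1+[\T]|_\T)^{-1} = \sum_{k=0}^{g-1}(-1)^k ([\T]|_\T)^k,
\]
so $c_{g-1}(T_\T) = (-1)^{g-1}([\T]|_\T)^{g-1}$. Integrating over $\T$ and using the push-pull formula gives
\[
\chi(\T) = \int_\T c_{g-1}(T_\T) = (-1)^{g-1}\int_A [\T]^g = (-1)^{g-1}g!,
\]
since the self-intersection of a principal polarization is $g!$. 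Plugging into the formula for $\chi(\T/\tau)$ produces the stated expression $\tfrac{1}{2}(-1)^{g-1}g! + 2^{g-2}(2^g-1)$.

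There is no real obstacle here: the identity $\chi(X/\tau) = \tfrac{1}{2}(\chi(X)+\chi(X^\tau))$ is standard, and the rest is one short Chern class computation. The only mildly delicate point is being careful that $\T$ is symmetric (so that $[-1]$ restricts to an involution on $\T$) and smooth (so that $\T[2]$ really is the fixed locus and the adjunction computation is valid), both of which are part of the running hypothesis on $(A,\T)$.
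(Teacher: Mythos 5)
Your proposal is correct and follows essentially the same route as the paper: both rest on the double-cover relation $\chi(X)=2\chi(X/\tau)-\#X^\tau$ applied to the $[-1]$-action on $A$ and $\T$, with $\#A[2]=2^{2g}$ and $\#\T[2]=2^{g-1}(2^g-1)$. The only difference is that you also verify the inputs $\chi(A)=0$ and $\chi(\T)=(-1)^{g-1}g!$ by an adjunction/Chern class computation, which the paper simply quotes as well known.
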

\begin{proof}
It is well-known that $\chi(A)=0$ and $\chi(\Theta)=(-1)^{g-1}g!$. Since $A$ is a double cover of $A^+$ ramified at
$2^{2g}$ points, $\chi(A) = 2\chi(A^+) - 2^{2g}$ and hence $\chi(A^+) = 2^{2g-1}$.  Similarly
\begin{align*}
\chi(\Theta^+) &= \frac{1}{2} ( \chi(\Theta) + \#\Theta[2] ) = \frac{1}{2} (-1)^{g-1}g! + 2^{g-2}(2^g-1). \qedhere
\end{align*}
\end{proof}
\begin{proposition}
	We have
	\[\dim \frac{H^{g-1}(\Theta, \bQ)^+}{H^{g-1}(A, \bQ)^+} = \frac{g!}{2} + (-1)^g 2^{g-2}(2^g+1) + 
		\left\{\begin{array}{cc} 
			-\binom{2g}{g}    &g\,\,\mathrm{ even} \\
			\binom{2g}{g+1} &g\,\,\mathrm{ odd} \\
		\end{array} \right.
	\]
\end{proposition}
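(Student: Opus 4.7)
The plan is to pin down the desired dimension by an Euler characteristic computation on $\Theta^+$: after using \cref{propLefschetz} to express every Betti number of $\Theta^+$ outside the middle degree in terms of a Betti number of $A^+$, only the middle-degree contribution remains, and it can be isolated from the known value of $\chi(\Theta^+)$.

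First I would record the Betti numbers of $A^+$. Since $-1$ acts on $H^n(A,\bQ) = \bigwedge^n H^1(A,\bQ)$ by $(-1)^n$, one has $\dim H^n(A,\bQ)^+ = \binom{2g}{n}$ when $n$ is even and $0$ otherwise; in particular $\sum_{n=0}^{2g} \dim H^n(A,\bQ)^+ = 2^{2g-1}$, consistent with $\chi(A^+) = 2^{2g-1}$. Writing $d$ for the quantity to be computed and setting $a_n^+ \eq \dim H^n(A,\bQ)^+$, $b_n^+ \eq \dim H^n(\Theta,\bQ)^+$, \cref{propLefschetz} gives $b_n^+ = a_n^+$ for $n < g-1$, $b_n^+ = a_{n+2}^+$ for $g-1 < n \le 2g-2$, and $b_{g-1}^+ = a_{g-1}^+ + d$. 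Plugging these into $\chi(\Theta^+) = \sum_{n=0}^{2g-2}(-1)^n b_n^+$ and reindexing the upper range by $m = n+2$, the signs $(-1)^n$ all collapse because $a_n^+$ vanishes when $n$ is odd, and the result is
\[
\chi(\Theta^+) = 2^{2g-1} - a_g^+ - a_{g+1}^+ + (-1)^{g-1} d.
\]

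To finish I would split by the parity of $g$. If $g$ is even then $a_g^+ = \binom{2g}{g}$, $a_{g+1}^+ = 0$ and $(-1)^{g-1} = -1$; if $g$ is odd then $a_g^+ = 0$, $a_{g+1}^+ = \binom{2g}{g+1}$ and $(-1)^{g-1} = 1$. Combining with the preceding lemma's formula $\chi(\Theta^+) = \tfrac{1}{2}(-1)^{g-1} g! + 2^{g-2}(2^g - 1)$, one solves for $d$ in each case and checks that the two answers assemble into the claimed piecewise formula by factoring $(-1)^g$ out of the power-of-two term using $2^{2g-1} - 2^{g-2}(2^g-1) = 2^{g-2}(2^g+1)$. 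The only mild obstacle here is careful parity bookkeeping; all of the geometry has already been absorbed into \cref{propLefschetz} and the Euler characteristic lemma.
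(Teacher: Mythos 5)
Your argument is correct and is essentially the paper's proof: both isolate the middle-degree contribution by feeding the Lefschetz-type isomorphisms of \cref{propLefschetz} into the Euler characteristic $\chi(\Theta^+)$ computed in the preceding lemma, then use that $H^n(A,\bQ)^+$ vanishes in odd degrees and equals $\binom{2g}{n}$ in even degrees (together with the identification $H^*(\Theta^+,\bQ)\cong H^*(\Theta,\bQ)^+$ from Macdonald, which you use implicitly). The only difference is bookkeeping: you solve directly from $\chi(\Theta^+)$, while the paper works with the difference $\chi(A^+)-\chi(\Theta^+)$; the parity split and the final simplification $2^{2g-1}-2^{g-2}(2^g-1)=2^{g-2}(2^g+1)$ come out the same.
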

\begin{proof}
	By \cite[(1.2)]{Macdonald62} and the injectivity of Proposition \ref{propLefschetz},
	\[
		\dim \frac{H^{g-1}(\Theta, \bQ)^+}{H^{g-1}(A, \bQ)^+} = h^{g-1}(\Theta^+)-h^{g-1}(A^+).
	\]
	We have $2\left(\chi(A^+)-\chi(\T^+)\right)= \#A[2]-\#\Theta[2]+(-1)^g g! =
	2^{g-1}(2^g+1)+(-1)^g g!$. By Proposition \ref{propLefschetz}, we can match up all
	cohomology groups of $\T^+$ with isomorphic cohomology groups of
	$A^+$, with the exception of degree $g-1$ on the $\T^+$ side and
	degrees $g-1, g, g+1$ on the $A^+$ side.  Thus
	\[
		\chi(A^+)-\chi(\T^+)
		=(-1)^g\left( h^{g-1}(\T^+) - h^{g-1}(A^+) +
		h^g(A^+)-h^{g+1}(A^+) \right),
	\]
	which means
	\[
		h^{g-1}(\Theta^+)-h^{g-1}(A^+) = (-1)^g \frac{2^{g-1}(2^g+1) + (-1)^g g!}{2} + (-1)^g (h^{g + 1}(A^+) - h^g(A^+)).
	\]
	Finally note that $h^n(A^+) = 0$ for $n$ odd and $h^n(A^+) = \binom{2g}{n}$ for
	$n$ even.
\end{proof}

\begin{proposition}
	\[ \dim \bK^-_\bQ = \frac{g!}{2} + (-1)^{g-1}(2^{g-2}(2^g+1))+\left\{ \begin{array}{cc}
			\binom{2g}{g+1} &g\,\,\mathrm{even} \\
			-\binom{2g}{g}   &g\,\,\mathrm{odd}
	\end{array}\right.\]
\[ \dim \bK^+_\bQ = \frac{g!}{2} +(-1)^g(2^{g-2}(2^g+1))+
		\left\{\begin{array}{cc} 
			-\binom{2g}{g}    &g\,\,\mathrm{ even} \\
			\binom{2g}{g+1} &g\,\,\mathrm{ odd} \\
		\end{array} \right.
\]
\end{proposition}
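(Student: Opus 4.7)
The plan is to reduce the statement to the previous proposition, which computes $\dim H^{g-1}(\T, \bQ)^+ - \dim H^{g-1}(A, \bQ)^+$, by establishing a $(-1)$-equivariant direct sum decomposition
\[
H^{g-1}(\T, \bQ) = \bK_\bQ \oplus i^*(H^{g-1}(A, \bQ)).
\]

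First, I would observe that the projection formula gives $i_* \circ i^* = \cup [\T]$ as a map $H^{g-1}(A, \bQ) \to H^{g+1}(A, \bQ)$, and Hard Lefschetz for the ample class $[\T]$ on $A$ makes this map an isomorphism. As a consequence, $i^*$ is injective (which is also the last part of \cref{propLefschetz}), no nonzero element of $i^*(H^{g-1}(A, \bQ))$ lies in $\bK_\bQ = \Ker(i_*)$, and the image of $i_*$ contains $(\cup [\T])(H^{g-1}(A, \bQ)) = H^{g+1}(A, \bQ)$, so $i_*$ is surjective. These three facts together yield the displayed decomposition. Because $-1$ commutes with both $i^*$ and $i_*$, the decomposition further splits according to the $\pm$ eigenspaces, and in particular
\[
\dim \bK^+_\bQ = \dim H^{g-1}(\T, \bQ)^+ - \dim H^{g-1}(A, \bQ)^+.
\]
The right-hand side is precisely the quantity evaluated in the previous proposition, so it matches the claimed formula for $\dim \bK^+_\bQ$ on the nose.

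For the anti-invariant piece I would use $\dim \bK^-_\bQ = \dim \bK - \dim \bK^+_\bQ$ together with the rank $\dim \bK = g! - \tfrac{1}{g+1} \binom{2g}{g}$ recorded in the introduction; the identity $\binom{2g}{g+1} = \tfrac{g}{g+1} \binom{2g}{g}$ collapses the mixed binomial terms into the case-by-case expression in the statement. No real obstacle arises: the entire argument is Hard Lefschetz on $A$ plus elementary bookkeeping, with all the combinatorial work already absorbed into the previous proposition.
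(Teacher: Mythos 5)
Your argument is correct and follows essentially the same route as the paper: the paper takes $-1$-invariants of the exact sequence $0 \to H^{g-1}(A,\bQ) \to H^{g-1}(\T,\bQ) \to \bK_\bQ \to 0$ to identify $\bK_\bQ^+$ with $H^{g-1}(\T,\bQ)^+/H^{g-1}(A,\bQ)^+$, invokes the previous proposition, and gets $\bK_\bQ^-$ by subtracting from $\dim \bK_\bQ = g! + \binom{2g}{g+1} - \binom{2g}{g}$, exactly as you do. The only difference is that you explicitly justify the splitting $H^{g-1}(\T,\bQ) = \bK_\bQ \oplus i^*H^{g-1}(A,\bQ)$ via the projection formula and Hard Lefschetz, a step the paper leaves implicit.
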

\begin{proof}
Taking $-1$ invariants of the exact sequence
\[ 0 \to H^{g-1}(A, \bQ) \to H^{g-1}(\Theta, \bQ) \to \bK_\bQ \to 0\]
gives an isomorphism between the $-1$ invariant component of $\bK$ and
$\frac{H^{g-1}(\T, \bQ)^+}{H^{g-1}(A, \bQ)^+}$.  
On the other hand, $\dim \bK_\bQ = g! + \binom{2g}{g+1} - \binom{2g}{g}$.  After
subtracting and using the previous formula we get the statement.
\end{proof} 

\subsection{The Hodge numbers of $\bK^+$ and $\bK^-$}\label{Apphodgenums}

\begin{proposition}\label{primal Hodge numbers}
	The Hodge numbers are
	\[
		h^{p, g - 1 - p}(\bK) = \eulerian{g}{p} - \binom{g}{p} \binom{g - 1}{p} + \binom{g}{p + 1} \binom{g - 1}{p - 1},
	\]
	where $\eulerian{g}{p} \eq \sum_{k = 0}^p \binom{g + 1}{k} (-1)^k (p + 1 - k)^g$ is an eulerian number (see \cite[(6.38)]{GrahamKnuthPatashnik94}).
\end{proposition}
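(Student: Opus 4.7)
The plan is to compute $h^{p,g-1-p}(\T)$ directly via a Koszul-theoretic Euler characteristic calculation and then subtract the contribution of $i^* H^{g-1}(A,\bQ)$. The starting point is the Hodge-structure splitting
\[
H^{g-1}(\T,\bQ) = i^* H^{g-1}(A,\bQ) \oplus \bK_\bQ,
\]
which holds because $i_* \circ i^*$ is cup product with $[\T]$ and therefore an isomorphism $H^{g-1}(A,\bQ) \isoto H^{g+1}(A,\bQ)$ by Hard Lefschetz on $A$; this exhibits $i^* H^{g-1}(A,\bQ)$ as a complement to $\ker i_* = \bK_\bQ$ inside $H^{g-1}(\T,\bQ)$. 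Consequently $h^{p,g-1-p}(\bK) = h^{p,g-1-p}(\T) - \binom{g}{p}\binom{g}{p+1}$. For $p+q \ne g-1$ the Hodge numbers of $\T$ are forced by those of $A$: pullback via Lefschetz gives $h^{p,q}(\T) = \binom{g}{p}\binom{g}{q}$ when $p+q < g-1$, and combining Hard Lefschetz on $\T$ with pullback gives $h^{p,q}(\T) = \binom{g}{p+1}\binom{g}{q+1}$ when $p+q > g-1$.

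Since $\Omega^1_A$ is trivial, iterating the short exact sequences $0 \to \Omega^{p-1}_\T(-\T) \to \Omega^p_A|_\T \to \Omega^p_\T \to 0$ (with $\Omega^p_A|_\T \iso \cO_\T^{\binom{g}{p}}$) yields a Koszul resolution
\[
0 \to \cO_\T(-p\T) \to \cO_\T(-(p-1)\T)^{\binom{g}{1}} \to \cdots \to \cO_\T(-\T)^{\binom{g}{p-1}} \to \cO_\T^{\binom{g}{p}} \to \Omega^p_\T \to 0.
\]
Combining the ideal sheaf sequence $0 \to \cO_A(-(m+1)\T) \to \cO_A(-m\T) \to \cO_\T(-m\T) \to 0$ with Kodaira vanishing on $A$ and $\chi(\cO_A(n\T)) = n^g$ gives $\chi(\cO_\T(-m\T)) = (-1)^{g-1}[(m+1)^g - m^g]$ for every $m \ge 0$. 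Taking the alternating sum across the Koszul resolution, re-indexing, and applying the Pascal identity $\binom{g}{k} + \binom{g}{k-1} = \binom{g+1}{k}$ produces
\[
\chi(\Omega^p_\T) = (-1)^{g-1+p}\sum_{k=0}^p (-1)^k \binom{g+1}{k}(p+1-k)^g = (-1)^{g-1+p}\eulerian{g}{p}.
\]

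Substituting the values of $h^{p,q}(\T)$ for $p+q \ne g-1$ into $\chi(\Omega^p_\T) = \sum_q (-1)^q h^{p,q}(\T)$ isolates $h^{p,g-1-p}(\T)$ modulo two partial alternating binomial sums $\sum_{q=0}^n(-1)^q\binom{g}{q}$, each of which collapses to $(-1)^n\binom{g-1}{n}$ by the standard induction. Solving for $h^{p,g-1-p}(\T)$, subtracting $\binom{g}{p}\binom{g}{p+1}$, and simplifying via $\binom{g-1}{p+1} - \binom{g}{p+1} = -\binom{g-1}{p}$ yields
\[
h^{p,g-1-p}(\bK) = \eulerian{g}{p} - \binom{g}{p}\binom{g-1}{p} + \binom{g}{p+1}\binom{g-1}{p-1}.
\]
The main obstacle is establishing the combinatorial identity $\chi(\Omega^p_\T) = (-1)^{g-1+p}\eulerian{g}{p}$; once this is in hand, the remaining arithmetic is routine bookkeeping with Lefschetz decompositions and binomial sums.
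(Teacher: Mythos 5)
Your argument is correct and follows essentially the same route as the paper: the same splitting $H^{g-1}(\T,\bQ)=i^*H^{g-1}(A,\bQ)\oplus\bK_\bQ$ together with the Lefschetz/Gysin identifications of $h^{p,q}(\T)$ away from the middle degree, and the same inductive use of the conormal sequence $0\to\Omega^{p-1}_\T(-\T)\to\res{\Omega^p_A}{\T}\to\Omega^p_\T\to 0$ to evaluate $\chi(\Omega^p_\T)$ as a signed Eulerian number. The only (cosmetic) difference is that you compute $\chi(\Omega^p_\T)$ by splicing the sequences into a Koszul-type resolution and using additivity of $\chi$ with $\chi(\cO_A(n\T))=n^g$ (the appeal to Kodaira vanishing is superfluous there), whereas the paper runs the same induction at the level of Chern characters and integrates against $\todd(\T)$.
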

\begin{proof}
	The Hodge number $h^{p, g - 1 - p}(\bK)$ appears in
	\begin{align*}
		\chi(\Omega_\T^p)
		&= \sum_{k = 0}^{g - 1} (-1)^k h^{p, k}(\T) \\
		&= \sum_{k = 0}^{g - 1 - p} (-1)^k h^{p, k}(A) + (-1)^{g - 1 - p} h^{p, g - 1 - p}(\bK) + \sum_{k = g - p}^{g - 1} (-1)^k h^{p + 1, k + 1}(A).
	\end{align*}
	Using a standard identity \cite[(5.16)]{GrahamKnuthPatashnik94}, the first term simplifies to
	\[
		\sum_{k = 0}^{g - 1 - p} (-1)^k \binom{g}{p} \binom{g}{k} = (-1)^{g - 1 - p} \binom{g}{p} \binom{g - 1}{p},
	\]
	and similarly $\sum_{k = g - p}^{g - 1} (-1)^k h^{p + 1, k + 1}(A) = (-1)^{g - p} \binom{g}{p + 1} \binom{g - 1}{p - 1}$.
	Next, we will compute $\chi(\Omega_\T^p)$.
	The conormal bundle sequence for $\T \subset A$ induces exact sequences
	\[
		0 \to \Omega_\T^{p - 1}(-\T) \to \res{\Omega_A^p}{\T} \to \Omega_\T^p \to 0
	\]
	for all $p \in \set{1, \dots, g}$, so by induction (and the triviality of $\Omega_A$)
	\[
		\ch(\Omega_\T^p) = \sum_{k = 0}^p \binom{g}{k} \res{(-e^{-\theta})^{p - k}}{\T}.
	\]
	Moreover $\todd(\T) = \todd(\cO_\T(\T))^{-1} = \res{\frac{1 - e^{-\theta}}{\theta}}{\T}$, and hence
	\begin{align*}
		\chi(\Omega_\T^p)
		&= \int_\T \ch(\Omega_\T^p) \todd(\T) \\
		&= \int_A \sum_{k = 0}^p \binom{g}{k} (-e^{-\theta})^{p - k} (1 - e^{-\theta}) \\
		&= \int_A \left(\sum_{k = 0}^p \binom{g + 1}{k} (-e^{-\theta})^{p + 1 - k} + \binom{g}{p}\right) \\
		&= \sum_{k = 0}^p \binom{g + 1}{k} (-1)^{p + 1 - k} (k - p - 1)^g,
	\end{align*}
	since $\int_A \frac{\theta^g}{g!} = \chi(\cO_A(\T)) = 1$.
	After some rearranging, this gives the required formula.
\end{proof}

A similar argument can be used to compute the Hodge numbers of $\bK^+$, as follows.
Let $\Tt \tou\beta \T$ be the blowup of $\T$ at its $2$-torsion points, with $\Delta \subset \Tt$ the exceptional divisor.
If $\Tt \tou\pi \Tt^+$ is the quotient of $\Tt$ by the induced action of $-1$, then $\pi^* : H^*(\Tt^+, \bQ) \to H^*(\Tt, \bQ)^+$ is an isomorphism \cite[(1.2)]{Macdonald62}.
Set $\cO_\Tt(1) \eq \cO_\Tt(-\Delta)$.
For each $p \in \bN$ there are exact sequences
\[
	0 \to (\beta^* \Omega_\T^p)(1 - p) \to \Omega_\Tt^p \to \Omega_\Delta^p \to 0
\]
and
\[
	0 \to \pi^* \Omega_{\Tt^+}^{p + 1} \to \Omega_\Tt^{p + 1} \to \Omega_\Delta^p(1) \to 0.
\]
It is straightforward to verify this by local calculations (for coordinate-free proofs, see \cite[{\S}1.2]{Conder-2019}).
The following ``dual'' sequences will also be useful:
\[
	0 \to T_\Tt \to \beta^* T_\T \to T_\Delta(-1) \to 0
\]
and
\[
	0 \to T_\Tt \to \pi^* T_{\Tt^+} \to \cO_\Delta(-2) \to 0.
\]

\begin{proposition}
	The Hodge numbers $h^{p, g - 1 - p}(\bK^+)$ of $\bK^+$ are
	\[
		\frac{1}{2} \eulerian{g}{p} + (-1)^g \left(\binom{g}{p} \sum_{q = 0}^{g - 1 - p} \binom{g}{q} \epsilon_{p + q} + \binom{g}{p + 1} \sum_{q = g - p}^{g - 1} \binom{g}{q + 1} \epsilon_{p + q} - \binom{g - 1}{p} \frac{2^g - 1}{2}\right),
	\]
	where $\epsilon_k \eq \frac{1}{2} (1 + (-1)^k)$ is one (resp.\ zero) if $k$ is even (resp.\ odd).
\end{proposition}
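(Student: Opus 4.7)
The plan is to mimic the proof of \cref{primal Hodge numbers}, computing $\chi(\Omega^p_\T)^+ := \sum_{k=0}^{g-1}(-1)^k h^{p,k}(\T)^+$ in two independent ways and comparing. Because pullback and Gysin are morphisms of Hodge structures, \cref{propLefschetz} refines Hodge-theoretically to
\[
	h^{p,k}(\T)^+ = \begin{cases} \epsilon_{p+k}\binom{g}{p}\binom{g}{k} & p+k < g-1,\\ \epsilon_{p+k}\binom{g}{p}\binom{g}{k} + h^{p,k}(\bK^+) & p+k = g-1,\\ \epsilon_{p+k}\binom{g}{p+1}\binom{g}{k+1} & p+k > g-1, \end{cases}
\]
so $\chi(\Omega^p_\T)^+$ equals an explicit combinatorial expression plus $(-1)^{g-1-p}h^{p,g-1-p}(\bK^+)$.

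For the second computation, I write $\chi(\Omega^p_\T)^+ = \tfrac{1}{2}\bigl(\chi(\Omega^p_\T) + L(-1, \Omega^p_\T)\bigr)$. The first summand simplifies to $(-1)^{g-1-p}\eulerian{g}{p}$ via \cref{primal Hodge numbers} together with the identity $\sum_{k=0}^n (-1)^k\binom{g}{k} = (-1)^n\binom{g-1}{n}$ used there. The second summand is computed using the Atiyah--Bott holomorphic Lefschetz fixed-point theorem applied to $-1 \colon \T \to \T$: the fixed locus is $\T[2]$, consisting of $|\T[2]| = 2^{g-1}(2^g-1)$ isolated points at each of which $-1$ acts on the tangent space by $-I$, so
\[
	L(-1, \Omega^p_\T) = |\T[2]| \cdot \frac{\mathrm{tr}(-1 \mid \Lambda^p T_x^*\T)}{\det(1 - (-1) \mid T_x \T)} = 2^{g-1}(2^g-1) \cdot \frac{(-1)^p\binom{g-1}{p}}{2^{g-1}} = (-1)^p(2^g-1)\binom{g-1}{p}.
\]

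Equating the two expressions and solving for $h^{p, g-1-p}(\bK^+)$ yields the stated formula after factoring out a common $(-1)^g$ using $(-1)^k\epsilon_{p+k} = (-1)^p\epsilon_{p+k}$. The main obstacle is this final sign and combinatorial bookkeeping: one must check that the $(-1)^{g-1-p}\tfrac{1}{2}L(-1,\Omega^p_\T)$ contribution matches the $(-1)^g \cdot (-\binom{g-1}{p}\tfrac{2^g-1}{2})$ term in the statement, and must reshape the residual double sum $\sum_k (-1)^k h^{p,k}(A)^+$ into the two truncated sums $\binom{g}{p}\sum_{q=0}^{g-1-p}\binom{g}{q}\epsilon_{p+q}$ and $\binom{g}{p+1}\sum_{q=g-p}^{g-1}\binom{g}{q+1}\epsilon_{p+q}$ appearing in the proposition.
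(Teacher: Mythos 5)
Your proposal is correct, but it takes a genuinely different route from the paper. The paper never invokes a fixed point formula: it blows up $\T$ at its $2$-torsion points, passes to the quotient $\Tt^+$ by $-1$, and computes $\chi(\Omega_{\Tt^+}^p)$ by Hirzebruch--Riemann--Roch, using exact sequences relating $\Omega_\Tt^p$, $\beta^*\Omega_\T^p$ and $\Omega_\Delta^p$ together with two residue calculations; comparing with the decomposition of $H^*(\Tt,\bQ)$ it arrives at $\sum_q(-1)^q\dim H^{p,q}(\T)^+ = \tfrac{1}{2}\bigl(\chi(\Omega_\T^p) + (-1)^p(2^g-1)\binom{g-1}{p}\bigr)$, which is exactly your identity $\chi(\Omega_\T^p)^+ = \tfrac12\bigl(\chi(\Omega_\T^p) + L(-1,\Omega_\T^p)\bigr)$ with $L(-1,\Omega_\T^p)=(-1)^p(2^g-1)\binom{g-1}{p}$. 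In effect, the paper's blow-up and residue computation is a hands-on evaluation of the holomorphic Lefschetz number that you obtain in one step from Atiyah--Bott, using that the fixed locus of $-1$ on $\T$ consists of the $2^{g-1}(2^g-1)$ points of $\T[2]$ (a count the paper records in \cref{Appranks}), each isolated and nondegenerate with $d(-1)=-I$ on the $(g-1)$-dimensional tangent space. Your approach buys brevity and uniformity in $p$ (no separate treatment of $p=0$ and no $\delta_p$ correction terms), at the cost of invoking the equivariant fixed point theorem as a black box; the paper's approach is longer but self-contained within Riemann--Roch technology already used elsewhere in the appendix. The remaining bookkeeping in your proposal --- the $(-1)$-equivariant Hodge-theoretic refinement of \cref{propLefschetz}, the identification $h^{p,g-1-p}(\bK^+) = h^{p,g-1-p}(\T)^+ - h^{p,g-1-p}(A)^+$, the simplification $\chi(\Omega_\T^p)=(-1)^{g-1-p}\eulerian{g}{p}$, and the sign manipulation via $(-1)^k\epsilon_{p+k}=(-1)^p\epsilon_{p+k}$ --- all checks out and reproduces the stated formula exactly.
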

\begin{proof}
	If $p = 0$ then $h^{p, g - 1 - p}(\bK^+) = h^{p, g - 1 - p}(\bK) = 0$.
	Otherwise, by Hirzebruch-Riemann-Roch and the above sequences
	\begin{align*}
		2 \chi(\Omega_{\Tt^+}^p)
		&= 2 \int_{\Tt^+} \ch(\Omega_{\Tt^+}^p) \todd(\Tt^+) \\
		&= \int_\Tt \pi^*(\ch(\Omega_{\Tt^+}^p) \todd(\Tt^+)) \\
		&= \int_\Tt (\ch(\Omega_\Tt^p) - \ch(\Omega_\Delta^{p - 1}(1))) \todd(\Tt) \todd(\cO_\Delta(-2)) \\
		&= \int_\Tt (\ch(\beta^* \Omega_\T^p) \ch(\cO_\Tt(1 - p)) + \ch(\Omega_\Delta^p) - \ch(\Omega_\Delta^{p - 1}(1))) \todd(\Tt) \todd(\cO_\Delta(-2)).
	\end{align*}
	The first term can be computed on $\T$.
	To be specific:
	\begin{align*}
		\chi_1
		&\eq \int_\Tt \ch(\beta^* \Omega_\T^p) \ch(\cO_\Tt(1 - p)) \todd(\Tt) \todd(\cO_\Delta(-2)) \\
		&= \int_\T \ch(\Omega_\T^p) \todd(\T) \cdot \beta_*\left(\ch(\cO_\Tt(1 - p)) \frac{\todd(\cO_\Delta(-2))}{\todd(T_\Delta(-1))}\right)
	\end{align*}
	The Euler sequence and the ideal sheaf sequence for $\Delta \subset \Tt$ imply that
	\[
		\todd(T_\Delta(-1)) = \frac{\todd(\cO_\Delta)^{g - 1}}{\todd(\cO_\Delta(-1))} = \frac{\todd(\cO_\Tt(1))^{1 - g}}{\todd(\cO_\Tt(-1))},
	\]
	and similarly $\todd(\cO_\Delta(-2)) = \todd(\cO_\Tt(-2)) \todd(\cO_\Tt(-1))^{-1}$, so
	\[
		\chi_1 = \int_\T \ch(\Omega_\T^p) \todd(\T) \cdot \beta_*(\ch(\cO_\Tt(1 - p)) \todd(\cO_\Tt(1))^{g - 1} \todd(\cO_\Tt(-2))).
	\]
	Everything inside $\beta_*$ is a polynomial in $h \eq c_1(\cO_\Tt(1))$.
	Since $\beta$ contracts $\Delta$ the only powers of $h$ which survive are $h^0$ and $h^{g - 1}$.
	Therefore
	\[
		\chi_1 = \chi(\Omega_\T^p) - \binom{g - 1}{p} \int_\Tt e^{(1 - p) h} \left(\frac{h}{1 - e^{-h}}\right)^{g - 1} \frac{2 h}{1 - e^{2 h}}.
	\]
	Since $-h$ is the class of $\Delta \subset \Tt$, which has $2^{g - 1} (2^g - 1)$ components,
	\[
		\chi_1 = \chi(\Omega_\T^p) - \binom{g - 1}{p} 2^{g - 1} (2^g - 1) \chi_2,
	\]
	where $\chi_2$ is the residue of
	\[
		-\frac{e^{(1 - p) z}}{(1 - e^{-z})^{g - 1}} \cdot \frac{2}{1 - e^{2 z}} = -\frac{e^{(g - p) z}}{(e^z - 1)^{g - 1}} \cdot \frac{2}{(1 - e^z) (1 + e^z)} = \frac{2 e^{(g - p) z}}{(e^z - 1)^g (e^z + 1)}
	\]
	at 0, or equivalently (making the change of variables $w \eq e^z - 1$), the residue of
	\begin{equation}\label{rational laurent}
		\frac{2 (w + 1)^{g - 1 - p}}{w^g (w + 2)}
	\end{equation}
	at $0$.
	Since $\frac{2}{w + 2} = \sum_{k = 0}^\infty \left(-\frac{w}{2}\right)^k$, taking the Laurent expansion of \cref{rational laurent} gives
	\[
		\chi_2 = \sum_{k = 0}^{g - 1 - p} \binom{g - 1 - p}{i} \left(-\frac{1}{2}\right)^{g - 1 - i} = \left(-\frac{1}{2}\right)^p \left(1 - \frac{1}{2}\right)^{g - 1 - p} = \frac{(-1)^p}{2^{g - 1}}.
	\]
	Grothendieck-Riemann-Roch allows us to compute the remaining terms on $\Delta$:
	\begin{align*}
		\chi_3
		&\eq \int_\Tt (\ch(\Omega_\Delta^p) - \ch(\Omega_\Delta^{p - 1}(1))) \todd(\Tt) \frac{\todd(\cO_\Tt(-2))}{\todd(\cO_\Tt(-1))} \\
		&= \int_\Delta (\ch(\Omega_\Delta^p) - \ch(\Omega_\Delta^{p - 1}(1))) \todd(\Delta) \frac{\todd(\cO_\Delta(-2))}{\todd(\cO_\Delta(-1))}.
	\end{align*}
	The following exact sequence arises from the Euler sequence on $\Delta$:
	\[
		0 \to \Omega_\Delta^p \to \cO_\Delta(-p)^{\oplus \binom{g - 1}{p}} \to \Omega_\Delta^{p - 1} \to 0.
	\]
	It implies that $\ch(\Omega_\Delta^p) = \binom{g - 1}{p} \ch(\cO_\Delta(-p)) - \ch(\Omega_\Delta^{p - 1})$, so by induction
	\[
		\ch(\Omega_\Delta^p) = \sum_{k = 0}^p (-1)^{p - k} \binom{g - 1}{k} e^{-k h}, %TODO: k -> n in sums?
	\]
	where $h \eq c_1(\cO_\Delta(1))$.
	Since
	\[
		\frac{\todd(\cO_\Delta(-2))}{\todd(\cO_\Delta(-1))} = \frac{2 h}{1 - e^{2 h}} \cdot \frac{1 - e^h}{h} = \frac{2}{e^h + 1},
	\]
	it follows that $\chi_3 = 2^{g - 1} (2^g - 1) \chi_4$, where $\chi_4$ is the residue of
	\begin{align*}
		&\aligneq \left(\sum_{k = 0}^p (-1)^{p - k} \binom{g - 1}{k} e^{-k z} + \sum_{k = 0}^{p - 1} (-1)^{p - k} \binom{g - 1}{k} e^{(1 - k) z}\right) \frac{2}{(1 - e^{-z})^{g - 1} (e^z + 1)} \\
		&= \left(\sum_{k = 0}^p (-1)^{p - k} \binom{g - 1}{k} e^{(g - 1 - k) z} + \sum_{k = 0}^{p - 1} (-1)^{p - k} \binom{g - 1}{k} e^{(g - k) z}\right) \frac{2}{(e^z - 1)^{g - 1} (e^z + 1)}
	\end{align*}
	at $0$.
	The above residue calculations, with some minor adjustments for the extremal terms in the sums, can be used to show that
	\[
		\chi_4 = \sum_{k = 0}^p \frac{(-1)^p}{2^{g - 2}} \binom{g - 1}{k} - \sum_{k = 0}^{p - 1} \frac{(-1)^p}{2^{g - 2}} \binom{g - 1}{k} + 2 (-1)^p \delta_p = \frac{(-1)^p}{2^{g - 2}} \binom{g - 1}{p} + 2 (-1)^p \delta_p,
	\]
	where $\delta_0 \eq 0$, $\delta_{g - 1} = 0$ and $\delta_p \eq 1$ for $0 < p < g - 1$.
	Therefore
	\begin{align}\label{twice chi theta quotient}
		2 \chi(\Omega_{\Tt^+}^p)
		\nonumber
		&= \chi_1 + \chi_3 \\
		\nonumber
		&= \chi(\Omega_\T^p) - (-1)^p \binom{g - 1}{p} (2^g - 1) + (-1)^p \binom{g - 1}{p} 2 (2^g - 1) + (-1)^p 2^g (2^g - 1) \delta_p \\
		&= \chi(\Omega_\T^p) + (-1)^p (2^g - 1) \left(\binom{g - 1}{p} + 2^g \delta_p\right).
	\end{align}
	Decomposing $H^*(\Tt, \bQ)$ as in \cite[0.1.3.ii]{Beauville1977-2} gives
	\begin{align}\label{chi theta quotient}
		\chi(\Omega_{\Tt^+}^p)
		\nonumber
		&= \sum_{q = 0}^{g - 1} (-1)^q \dim(H^{p, q}(\Tt, \bQ)^+) \\
		&= \sum_{q = 0}^{g - 1} (-1)^q \dim(H^{p, q}(\T, \bQ)^+) + (-1)^p 2^{g - 1} (2^g - 1) \delta_p.
	\end{align}
	Combining \cref{twice chi theta quotient,chi theta quotient} allows us to express $(-1)^{g - 1 - p} h^{p, g - 1 - p}(\bK^+)$ as
	\[
		\frac{1}{2} \chi(\Omega_\T^p) + (-1)^p \frac{2^g - 1}{2} \binom{g - 1}{p} - \sum_{q = 0}^{g - 1 - p} (-1)^q h^{p, q}(A) \epsilon_{p + q} - \sum_{q = g - p}^{g - 1} (-1)^q h^{p + 1, q + 1}(A) \epsilon_{p + q},
	\]
	as required.
\end{proof}

%\bibliographystyle{amsalpha}
%\bibliography{biblio}

\begin{thebibliography}{ACGH85}

\bibitem[ACG11]{ACG}
E.~Arbarello, M.~Cornalba, and P.A. Griffiths, \emph{Geometry of algebraic
  curves, volume {II} with a contribution by {J}.{D}. {H}arris}, Springer,
  Berlin, 2011.

\bibitem[ACGH85]{ACGH}
E.~Arbarello, M.~Cornalba, P.A. Griffiths, and J.D. Harris, \emph{Geometry of
  algebraic curves, volume {I}}, Springer, Berlin, 1985.

\bibitem[AFS15]{ArbarelloFerrettiSacca}
E.~Arbarello, A.~Ferretti, and G.~Sacca, \emph{Relative {P}rym varieties
  associated to the double cover of an {E}nriques surface}, J. Differential
  Geom. \textbf{100} (2015), no.~2, 191--250.

\bibitem[BD87]{BeauvilleDebarre87}
A.~Beauville and O.~Debarre, \emph{Sur le probl\`eme de {S}chottky pour les
  vari\'et\'es de {P}rym}, Ann. Scuola Norm. Sup. Pisa Cl. Sci. (4) \textbf{14}
  (1987), no.~4, 613--623.

\bibitem[Bea77a]{Beauville1977-1}
A.~Beauville, \emph{Prym varieties and the {S}chottky problem}, Inventiones
  Math. \textbf{41} (1977), 149--196.

\bibitem[Bea77b]{Beauville1977-2}
\bysame, \emph{Vari\'et\'es de {P}rym et jacobiennes interm\'ediaires}, Annales
  Sc. de l'\'{E}cole Norm. Sup., 4\`eme s\'erie \textbf{10} (1977), 309--391.

\bibitem[Bea82]{Beauville82}
\bysame, \emph{Sous-vari\'et\'es sp\'eciales des vari\'et\'es de {P}rym},
  Compositio Math. \textbf{45, Fasc. 3} (1982), 357--383.

\bibitem[BL04]{BirkenhakeLange04}
C.~Birkenhake and H.~Lange, \emph{Complex abelian varieties}, Grundlehren der
  Mathematischen Wissenschaften [Fundamental Principles of Mathematical
  Sciences], vol. 302, Springer, Berlin, 2004.

\bibitem[Blo94]{Bloch94}
S.~Bloch, \emph{The moving lemma for higher {C}how groups}, J. Algebraic Geom.
  \textbf{3} (1994), no.~3, 537--568. \MR{1269719}

\bibitem[Con19]{Conder-2019}
J.~Conder, \emph{Geometric links between exceptional root lattices and the
  cohomology of theta divisors}, Ph.D. thesis, University of California, San
  Diego, June 2019.

\bibitem[DH88]{DiazHarris1988}
S.~Diaz and J.~Harris, \emph{Geometry of the {S}everi variety}, Trans. Amer.
  Math. Soc. \textbf{309} (1988), no.~1, 1--34. \MR{957060}

\bibitem[Dol12]{Dolgachev2012}
I.~Dolgachev, \emph{Classical algebraic geometry. a modern view}, Cambridge
  University Press, Cambridge, UK, 2012.

\bibitem[Don92]{donagi92}
R.~Donagi, \emph{The fibers of the {P}rym map}, Curves, Jacobians, and Abelian
  Varieties (Amherst, MA 1990), Contemp. Math., vol. 136, Amer. Math. Soc.,
  1992, pp.~55--125.

\bibitem[DS81]{DonagiSmith81}
R.~Donagi and R.~Smith, \emph{The structure of the {P}rym map}, Acta Math.
  \textbf{146} (1981), 25--102.

\bibitem[Ful84]{fulton84}
W.~Fulton, \emph{Intersection theory}, Springer, Berlin, 1984.

\bibitem[GKP94]{GrahamKnuthPatashnik94}
R.~L. Graham, D.~E. Knuth, and O.~Patashnik, \emph{Concrete mathematics, a
  foundation for computer science}, Addison-Wesley, New York, 1994.

\bibitem[Har77]{Hartshorne77}
R.~Hartshorne, \emph{Algebraic geometry}, Springer, Berlin, 1977.

\bibitem[IS95]{IzadiVanStraten}
E.~Izadi and D.~van Straten, \emph{The intermediate jacobians of the
  theta-divisors of four-dimensional principally polarized abelian varieties},
  Journal of Algebraic Geometry \textbf{4} (1995), no.~3, 557--590.

\bibitem[ITW17]{IzadiTamasWang}
E.~Izadi, Cs. Tamas, and J.~Wang, \emph{The primitive cohomology of the theta
  divisor of an abelian fivefold}, J. Algebraic Geom. \textbf{26} (2017),
  107--175.

\bibitem[IW15]{IzadiWang2015}
E.~Izadi and J.~Wang, \emph{The primitive cohomology of theta divisors},
  Proceedings of the conference on {H}odge theory and classical algebraic
  geometry, Contemp. Math., vol. 647, AMS, Providence, RI, 2015, pp.~79--90.

\bibitem[IW18]{IzadiWang2018}
E.~Izadi and J.~Wang, \emph{The irreducible components of the primal cohomology
  of the theta divisor of an abelian fivefold}, Amer. J. Math. (To appear).

\bibitem[Iza95]{Izadi-A4-1995}
E.~Izadi, \emph{The geometric structure of ${{\mathcal A}}_4$, the structure of
  the {P}rym map, double solids and ${\Gamma}_{00}$-divisors}, {J}ournal f\"ur
  die {R}eine und {A}ngewandte {M}athematik \textbf{462} (1995), 93--158.

\bibitem[Kr{\"a}15]{Kramer2015}
T.~Kr{\"a}mer, \emph{On a family of surfaces of general type attached to
  abelian four-folds and the {W}eyl group {$W(E_6)$}}, Int. Math. Res. Not.
  IMRN (2015), no.~24, 13062--13105. \MR{3436138}

\bibitem[Mac62]{Macdonald62}
I.~G. Macdonald, \emph{Symmetric products of an algebraic curve}, Topology
  \textbf{1} (1962), 319--343.

\bibitem[MM83]{MoriMukai83}
S.~Mori and S.~Mukai, \emph{The uniruledness of the moduli space of curves of
  genus 11}, Algebraic geometry (Tokyo/Kyoto, 1982), Lecture Notes in Math.,
  vol. 1016, Springer, Berlin, 1983, pp.~334--353.

\bibitem[Mum74]{Mumford-PrymI-1974}
D.~Mumford, \emph{Prym varieties {I}}, Contributions to Analysis (L.V. Ahlfors,
  I.~Kra, B.~Maskit, and L.~Niremberg, eds.), Academic Press, 1974,
  pp.~325--350.

\bibitem[OS79]{OdaSeshadri1979}
T.~Oda and C.~S. Seshadri, \emph{Compactifications of the generalized
  {J}acobian variety}, Trans. Amer. Math. Soc. \textbf{253} (1979), 1--90.
  \MR{536936}

\bibitem[Rec74]{recillas74}
S.~Recillas, \emph{Jacobians of curves with a $g^1_4$ are {P}rym varieties of
  trigonal curves}, Bol. Soc. Math. Mexicana \textbf{19} (1974), 9--13.

\bibitem[Spa94]{Spanier}
E.~H. Spanier, \emph{Algebraic topology}, Springer-Verlag, New York, [1994],
  Corrected reprint of the 1966 original. \MR{1325242}

\bibitem[Wel81]{Welters81}
G.~E. Welters, \emph{{A}bel-{J}acobi isogenies for certain types of fano
  threefolds}, Mathematical Centre Tracts, CWI Tracts \textbf{141} (1981),
  1--139.

\bibitem[Wel85]{Welters85}
\bysame, \emph{A theorem of {G}ieseker-{P}etri type for {P}rym varieties}, Ann.
  Sci. \'{E}cole Norm. Sup. (4) \textbf{18} (1985), no.~4, 671--683.
  \MR{839690}

\end{thebibliography}

\providecommand{\bysame}{\leavevmode\hbox to3em{\hrulefill}\thinspace}
\providecommand{\MR}{\relax\ifhmode\unskip\space\fi MR }
% \MRhref is called by the amsart/book/proc definition of \MR.
\providecommand{\MRhref}[2]{%
  \href{http://www.ams.org/mathscinet-getitem?mr=#1}{#2}
}
\providecommand{\href}[2]{#2}

\end{document}